\renewcommand{\Re}{\operatorname{Re}}
\renewcommand{\Im}{\operatorname{Im}}
\def\@tocline#1#2#3#4#5#6#7{\relax
  \ifnum #1>\c@tocdepth 
  \else
    \par \addpenalty\@secpenalty\addvspace{#2}%
    \begingroup \hyphenpenalty\@M
    \@ifempty{#4}{%
      \@tempdima\csname r@tocindent\number#1\endcsname\relax
    }{%
      \@tempdima#4\relax
    }%
    \parindent\z@ \leftskip#3\relax \advance\leftskip\@tempdima\relax
    \rightskip\@pnumwidth plus4em \parfillskip-\@pnumwidth
    #5\leavevmode\hskip-\@tempdima
      \ifcase #1
       \or\or \hskip 1em \or \hskip 2em \else \hskip 3em \fi%
      #6\nobreak\relax
    \dotfill\hbox to\@pnumwidth{\@tocpagenum{#7}}\par
    \nobreak
    \endgroup
  \fi}
\newcommand{\bD}{\mathbb{D}}
\newcommand{\E}{\mathbb E }
\newcommand{\R}{\mathbb{R}}
\newcommand{\N}{\mathbb{N}}
\newcommand{\C}{\mathbb{C}}
\newcommand{\Z}{\mathbb{Z}}
\newcommand{\ind}{{\mathbbm{1}}}
\newcommand{\supp}{\mathop{\mathrm{supp}}\nolimits}
\newcommand{\eee}{{\rm e}}
\newcommand{\ii}{{\rm{i}}}
\newcommand{\toweak}{\overset{w}{\underset{n\to\infty}\longrightarrow}}
\newcommand{\ton}{\overset{}{\underset{n\to\infty}\longrightarrow}}
\newcommand{\sgn}{\mathop{\mathrm{sgn}}\nolimits}
\newcommand{\unif}{{\rm Unif}}
\newcommand{\bsl}{\backslash}
\newcommand{\dd}{{\rm d}}
\newcommand{\stirling}[2]{\genfrac{[}{]}{0pt}{}{#1}{#2}}
\newcommand{\stirlingsec}[2]{\genfrac{\{}{\}}{0pt}{}{#1}{#2}}
\newcommand{\eulerian}[2]{\genfrac{\langle}{\rangle}{0pt}{}{#1}{#2}}
\newcommand{\HyperP}[3]{\mathcal{H}^{#1,#2}_{#3}}
\newcommand{\eps}{\varepsilon}
\theoremstyle{plain}
\newtheorem{theorem}{Theorem}[section]
\newtheorem{proposition}[theorem]{Proposition}
\newtheorem{lemma}[theorem]{Lemma}
\newtheorem{corollary}[theorem]{Corollary}
\theoremstyle{definition}
\newtheorem{definition}[theorem]{Definition}
\theoremstyle{remark}
\newtheorem{remark}[theorem]{Remark}
\begin{document}

\title{Zeros and exponential profiles of polynomials II:
\\
Examples}

\begin{abstract}
In [Jalowy, Kabluchko, Marynych, \href{https://arxiv.org/pdf/2504.11593}{arXiv:2504.11593v1}, 2025], the authors discuss a user-friendly approach to determine the limiting empirical zero distribution of a sequence of real-rooted polynomials, as the degree goes to $\infty$. In this note, we aim to apply it to a vast range of examples of polynomials providing a unifying source for limiting empirical zero distributions.

We cover Touchard, Fubini, Eulerian, Narayana  and little $q$-Laguerre polynomials as well as hypergeometric polynomials including the classical Hermite, Laguerre and Jacobi polynomials. We construct polynomials whose empirical zero distributions converge to the free multiplicative normal and Poisson distributions.  Furthermore, we study polynomials generated by some differential operators.
As one inverse result, we derive coefficient asymptotics of the characteristic polynomial of random covariance matrices.
\end{abstract}

\author{Jonas Jalowy}
\address{Jonas Jalowy: Paderborn University, Institute of Mathematics, Warburger Str. 100, 33098 Paderborn, Germany}
\email{jjalowy@math.upb.de}

\author{Zakhar Kabluchko}

\address{Zakhar Kabluchko: Institute of Mathematical Stochastics, Department of Mathematics and Computer Science, University of M\"{u}nster, Orl\'{e}ans-Ring 10, D-48149 M\"{u}nster, Germany}
\email{zakhar.kabluchko@uni-muenster.de}

\author{Alexander Marynych}
\address{Alexander Marynych: School of Mathematical Sciences, Queen Mary University of London, Mile End Road, London E14NS, United Kingdom; Faculty of Computer Science and Cybernetics, Taras Shevchenko National University of Kyiv, Kyiv 01601, Ukraine}
\email{o.marynych@qmul.ac.uk, marynych@knu.ua}

\keywords{Polynomials, zeros, coefficients, exponential profile, Cauchy transform, logarithmic potential, Touchard polynomials, Fubini polynomials, Eulerian polynomials, Narayana polynomials, hypergeometric polynomials, $q$-Laguerre polynomials, classical orthogonal polynomials,  finite free probability, free convolution, free multiplicative normal distribution, free multiplicative Poisson distribution, random covariance matrices, secular coefficients.}

\subjclass[2020]{Primary: 26C10; Secondary: 60B10, 33C45, 46L54, 30C10, 30C15, 60B20, 11B73.}
\maketitle

\tableofcontents

\section{Introduction}
We consider a sequence of polynomials $P_n(x) = \sum_{k=0}^n a_{k:n} x^k$ of degree at most $n$, with real coefficients and nonpositive real zeros. This paper is a continuation of the recent study \cite{jalowy_kabluchko_marynych_zeros_profiles_part_I}, where the authors discuss the method of exponential profiles for establishing the existence of a limiting empirical zero distribution of $P_n$ as $n\to\infty$. While \cite{jalowy_kabluchko_marynych_zeros_profiles_part_I} focused on the theoretical foundations and examined how limiting empirical zero distributions are affected by certain polynomial operations, such as finite free convolution and repeated differentiation, the present paper demonstrates the versatility of the method by applying it to various classical polynomials. Although we shall briefly recall the necessary definitions and tools below, we recommend that readers first familiarize themselves with \cite{jalowy_kabluchko_marynych_zeros_profiles_part_I}, where they will find detailed bibliographic remarks, motivation, and further applications.

\section{A quick recap on zeros and exponential profiles}

\subsection{Definitions}

We shall briefly recall necessary definitions and concepts from \cite{jalowy_kabluchko_marynych_zeros_profiles_part_I} and refer to that source for detailed explanations.

\begin{definition}\label{def:empirical_distr}
The \textit{empirical distribution of zeros} of a polynomial $P\not\equiv 0$ of degree at most $n$ is the probability measure $\lsem P \rsem_n$ on $\overline{\C}:=\C\cup\{\infty\}$ which assigns equal weight $1/n$ to each zero (counting multiplicities) and places the remaining mass at $\infty$. Thus, letting $\delta_x$ denote the Dirac measure at $x\in\overline{\mathbb{C}}$, we define
\begin{equation}\label{eq:empirical_distr_zeros_def}
\lsem P \rsem_n := \frac 1n \sum_{\substack{z\in \C\,:\, P(z) = 0}} \delta_z+ \frac{n-\deg P}{n}\delta_{\infty}.
\end{equation}
\end{definition}
We shall work with sequences of polynomials rather than with a single polynomial $P$, and the variable $n$ in Definition~\ref{def:empirical_distr} should be understood as the variable indexing some sequence $(P_n)_{n \in \N}$. In many cases, $n$ coincides with the degree of $P_n$, but in general, we shall assume only $\deg P_n \leq n$. If a polynomial $P$ has only real zeros all of the same sign, we identify the point at infinity, $\infty$, with either $+\infty$ (when the roots are nonnegative) or $-\infty$ (when the roots are nonpositive). Accordingly, $\lsem P\rsem_n$ is supported on $[0,+\infty]$ in the former case, and on $[-\infty,0]$ in the latter.

An important characteristic of a sequence of polynomials $(P_n)$ with {\em nonnegative real coefficients} is its exponential profile.

\begin{definition}\label{def:exp_profile}
For every $n\in \N$ let $P_n(x) = \sum_{k=0}^n a_{k:n} x^k$ be a polynomial of degree at most $n$ with nonnegative coefficients $a_{k:n}\ge 0$.  We say that the sequence $(P_n)_{n\in \N}$ has an \emph{exponential profile} $g$  if $g:(\underline{m},\overline{m})\to \R$ is a function defined on a nonempty interval $(\underline{m},\overline{m})\subseteq [0,1]$  such that
\begin{equation}\label{eq:exp_profile_definition}
\lim_{n\to\infty}\frac{1}{n}\log a_{\lfloor \alpha n \rfloor:n}=
\begin{cases}
g(\alpha),&\text{if }\alpha\in (\underline{m},\overline{m}),\\
-\infty,&\text{if }\alpha\in [0,\underline{m})\cup (\overline{m},1].
\end{cases}
\end{equation}
Here, and in the following, we stipulate that $\log 0 = -\infty$.
\end{definition}

The basic principle lying behind the method of exponential profiles discussed in \cite{jalowy_kabluchko_marynych_zeros_profiles_part_I} is as follows.

\begin{center}
\fbox{\begin{minipage}{0.8\textwidth}
A sequence $(P_n)_{n\in\mathbb{N}}$ of polynomials with only nonpositive roots possesses an exponential profile if and only if the sequence of probability measures $(\lsem P_n\rsem_n)_{n\in\mathbb{N}}$ converges weakly on $[-\infty,\,0]$ to a limiting probability measure which assigns a positive mass to $(-\infty,0)$.
\end{minipage}}
\end{center}

Theorems~\ref{theo:exp_profile_implies_zeros} and~\ref{theo:zeros_imply_exp_profile}, established in \cite{jalowy_kabluchko_marynych_zeros_profiles_part_I} and restated below for the reader's convenience, provide a rigorous foundation for the aforementioned principle. For the purposes of the present paper, the direct implication stating that existence of an exponential profile implies convergence of the empirical distribution is of greater relevance. Indeed, given a sequence of polynomials with only nonpositive roots, a direct proof of the convergence of their empirical distributions is typically a technically involved task. By contrast, establishing the existence of an exponential profile usually follows straightforwardly from classical asymptotic relations.

The main principle outlined above suggests that it should be possible to recover the limiting measure of $\lsem P_n \rsem_n$ as $n \to \infty$ from the exponential profile $g$, and conversely. The most convenient framework for this purpose is provided by Cauchy transforms.

\begin{definition}\label{}
For any probability measure $\mu$ on $\R$ its \emph{Cauchy transform} $G:\C\backslash \supp(\mu)\to\C$ is defined by
\begin{align}
G(t)=\int_{\R}\frac{\mu({\rm d}z)}{t-z}.
\end{align}
\end{definition}

The negative of the Cauchy transform is known as the Stieltjes transform. By the Stieltjes--Perron inversion formula, see~\cite[Proposition 2.1.2 on p.~35]{pastur_shcherbina_book} and~\cite[pp.~124--125]{akhiezer_book}, the Cauchy transform uniquely determines the measure $\mu$ via
\begin{equation}\label{eq:Perron_inversion}
\mu (I) =
-\lim_{y\to 0+} \int_I\frac 1 \pi\Im G(x + iy){\rm d}x,
\end{equation}
where $I$ is any interval such that $\mu$ is continuous at its endpoints. Also, locally uniform convergence on $\mathbb{C}\backslash\R$ of the sequence of Cauchy transforms is equivalent to weak convergence of the corresponding sequence of probability measures.

Observe that the definition of Cauchy transforms extends naturally to probability measures on $(-\infty,+\infty]$ or $[-\infty,+\infty)$. Moreover, if $\mu$ is supported by $[-\infty,A]$ (respectively, $[A,+\infty]$) for some $A\in\mathbb{R}$, then
\begin{equation}\label{eq:atom_at_infinity_over_R}
1 -\mu(\{-\infty\}) = \lim_{t\to+\infty} t G(t)\quad\quad \left(\text{respectively, }1 -\mu(\{+\infty\}) = \lim_{t\to-\infty} t G(t)\right).
\end{equation}

\subsection{Theorems on zeros and profiles} Having recalled the necessary concepts, we are now ready to formulate two theorems, proved in~\cite{jalowy_kabluchko_marynych_zeros_profiles_part_I}, that support the method of exponential profiles.

\begin{theorem}[Profile for coefficients implies limiting distribution of zeros, Theorem 2.2 in \cite{jalowy_kabluchko_marynych_zeros_profiles_part_I}]
\label{theo:exp_profile_implies_zeros}
For every $n\in \N$ let $P_n(x) = \sum_{k=0}^n a_{k:n} x^k$ be a polynomial of degree at most $n$. Suppose that the sequence $(P_n)_{n\in\N}$ possesses an exponential profile $g:(\underline{m},\overline{m})\to\mathbb{R}$ in the sense of Definition~\ref{def:exp_profile}. If, additionally, all roots of $P_n$ are nonpositive for all sufficiently large $n$,  then the following hold.
\begin{enumerate}[(a)]
\item We have weak convergence
$$\lsem P_n\rsem_n\toweak\mu$$
of probability measures on $[-\infty,0]$ for some $\mu$.
\item The measure $\mu$ is uniquely determined by the profile $g$ as follows. If $G$ denotes the Cauchy transform of $\mu$, then $t\mapsto tG(t)$, $t>0$, is the inverse of the function\footnote{The derivative $g'$ exists on $(\underline{m},\overline{m})$. In fact, $g$ is automatically infinitely differentiable and strictly concave on $(\underline m ,\overline m )$.} $\alpha\mapsto\eee^{-g'(\alpha)}$, $\alpha\in (\underline{m},\overline{m})$. The function $t\mapsto tG(t)$ is a strictly increasing continuous bijection between $(0,+\infty)$ and $(\underline{m},\overline{m})$.
\item It holds $\mu(\{0\})=\underline{m}$ and $\mu(\{-\infty\})=1-\overline{m}$.
\end{enumerate}
\end{theorem}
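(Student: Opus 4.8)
The plan is to route everything through the logarithmic generating function $\phi_n(s) := \tfrac1n \log P_n(\eee^s)$, $s\in\R$, which is the natural bridge between the coefficient asymptotics encoded by $g$ and the zero distribution encoded by the Cauchy transform. Two elementary observations drive the argument. First, since $P_n(\eee^s) = \sum_k a_{k:n}\eee^{ks}$ is a sum of exponentials with nonnegative coefficients, $\phi_n$ is convex (log-sum-exp). Second, writing $G_n$ for the Cauchy transform of $\lsem P_n\rsem_n$, the point mass at infinity is annihilated and one has $G_n(t) = \tfrac1n P_n'(t)/P_n(t)$ for $t>0$; consequently
\[
\phi_n'(s) = \frac{\eee^s P_n'(\eee^s)}{n\,P_n(\eee^s)} = t\,G_n(t)\Big|_{t=\eee^s}.
\]
Thus controlling $\phi_n$ and its derivative directly controls $t\mapsto tG_n(t)$ on the positive half-line.

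Next I would compute the limit of $\phi_n$ by Laplace's method. Using that $a_{\lfloor\alpha n\rfloor:n} = \eee^{n(g(\alpha)+o(1))}$ on $(\underline m,\overline m)$ and is exponentially negligible outside, together with the fact that there are only $n+1$ terms, the sum is dominated by its largest term, giving
\[
\phi_n(s)\ \longrightarrow\ \Lambda(s):=\sup_{\alpha\in(\underline m,\overline m)}\big(g(\alpha)+\alpha s\big),\qquad s\in\R,
\]
the upper bound from the term count and the lower bound from evaluating at $\alpha$ near the maximizer. Here the hypothesis that all roots are nonpositive enters decisively: by Newton's inequalities the coefficient sequence $(a_{k:n})_k$ is log-concave, so $g$ is concave, the supremum defining $\Lambda$ is attained at a unique interior point $\alpha^*(s)$, and $\Lambda$ is a genuine Legendre conjugate. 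Because $\phi_n$ and $\Lambda$ are convex, pointwise convergence automatically upgrades to convergence of derivatives at every point of differentiability of $\Lambda$, whence $tG_n(t)\to\Lambda'(\log t)$ for $t>0$. Since the $G_n$ are analytic and locally uniformly bounded on $\C\setminus[-\infty,0]$ (each root lies in $[-\infty,0]$), a normal-families/Vitali argument on this connected domain promotes convergence on the positive axis to locally uniform convergence, in particular on $\C\setminus\R$, which is equivalent to weak convergence $\lsem P_n\rsem_n\toweak\mu$; this proves (a).

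For (b) I would identify $\Lambda'$ by Legendre duality. The envelope theorem gives $\Lambda'(s)=\alpha^*(s)$, and the first-order condition $g'(\alpha^*)=-s$ reads $\eee^s=\eee^{-g'(\alpha^*)}$. Writing $t=\eee^s$, this says precisely that $t\mapsto tG(t)$ and $\alpha\mapsto \eee^{-g'(\alpha)}$ are mutually inverse, with $tG(t)=\alpha^*$. Strict monotonicity of the Cauchy transform of a measure on $(-\infty,0]$ forces $\Lambda$ strictly convex (equivalently $g$ strictly concave), so $t\mapsto tG(t)$ is a strictly increasing continuous bijection; tracking the maximizer as $s\to\pm\infty$ shows its range is exactly $(\underline m,\overline m)$. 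Finally (c) follows from the boundary behaviour: for a measure on $[-\infty,0]$ one has $tG(t)=\int \tfrac{t}{t-z}\,\mu(\dd z)$, so dominated convergence gives $\lim_{t\to0+}tG(t)=\mu(\{0\})$, which equals $\underline m$ by the range computation, while \eqref{eq:atom_at_infinity_over_R} gives $1-\mu(\{-\infty\})=\lim_{t\to+\infty}tG(t)=\overline m$.

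The main obstacle is the passage from scalar convergence of coefficients to measure-level convergence of zeros — concretely, (i) justifying the Laplace asymptotics well enough to pin down $\Lambda$ on all of $\R$, and (ii) converting convergence of the real convex functions $\phi_n$ into convergence of the complex analytic Cauchy transforms $G_n$ off the real axis. Step (ii) is where the real work lies: the convex-analysis theorem yields only derivative convergence on $\R$, so one must separately establish the uniform boundedness of $(G_n)$ needed to run the normal-families argument and to rule out escape of mass, and one must verify that the candidate limit $\Lambda'(\log t)$ genuinely arises as $tG(t)$ for a bona fide probability measure rather than a defective one.
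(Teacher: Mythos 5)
A preliminary remark on the comparison itself: this paper does not prove Theorem~\ref{theo:exp_profile_implies_zeros} at all --- it is restated from Part~I \cite{jalowy_kabluchko_marynych_zeros_profiles_part_I} for the reader's convenience, so there is no in-paper proof to measure your attempt against. Judged on its own merits, your route --- the convex functions $\phi_n(s)=\frac1n\log P_n(\eee^s)$, the identity $\phi_n'(\log t)=tG_n(t)$ for $t>0$, Laplace asymptotics $\phi_n\to\Lambda(s)=\sup_\alpha(g(\alpha)+\alpha s)$, convex-analytic convergence of derivatives, and a Montel/Vitali upgrade to locally uniform convergence of the Cauchy transforms --- is sound and is in fact the same Legendre-duality mechanism that Section~2.3 of this paper attributes to Part~I: there $-g+M_g$ is the Legendre transform of $u\mapsto\Psi(\eee^u)$, and \eqref{eq:main_polys_converse_assumption_p_n(1)} is precisely your statement $\phi_n(0)\to\Lambda(0)$.

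Two points need more than you give them. First, the Laplace upper bound requires control of $a_{k:n}$ uniformly in $k$, including for $k/n$ near and beyond the endpoints $\underline m,\overline m$, where Definition~\ref{def:exp_profile} gives only pointwise statements; the Newton log-concavity you invoke does fix this (linear extrapolation of $k\mapsto\log a_{k:n}$ from an interior window $[(\underline m+\eps)n,(\overline m-\eps)n]$), but without that step $\phi_n\to\Lambda$ on all of $\R$ is not justified, so it belongs in the proof rather than in a remark. Second, and more substantively: you apply the envelope theorem as if the supremum defining $\Lambda(s)$ were attained at an interior point $\alpha^*(s)$ for \emph{every} $s\in\R$. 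Concavity of $g$ alone does not give this: if $g$ had bounded one-sided derivatives at an endpoint, the maximizer would stick to the boundary for all large $|s|$, the function $\alpha\mapsto\eee^{-g'(\alpha)}$ would fail to surject onto $(0,\infty)$, and the bijection claim in part (b) would fail as stated --- the steepness of $g$ at the endpoints is part of what must be proved. Your own toolkit closes this, but only in the right order: since the profile interval is nonempty, $\underline m<\overline m$; the slopes of $\Lambda$ at $\pm\infty$ are $\underline m$ and $\overline m$, so dominated convergence in $tG(t)=\int\frac{t}{t-z}\,\mu(\dd z)$ together with \eqref{eq:atom_at_infinity_over_R} yields part (c) \emph{first}, namely $\mu(\{0\})=\underline m$ and $1-\mu(\{-\infty\})=\overline m$; hence $\mu((-\infty,0))=\overline m-\underline m>0$, which is what makes $t\mapsto tG(t)$ strictly increasing with open range $(\underline m,\overline m)$; this in turn forces $\Lambda$ to be $C^1$ and strictly convex, hence interior attainment, the first-order condition $g'(\alpha^*)=-\log t$, and part (b). As written, your appeal to ``strict monotonicity of the Cauchy transform'' to deduce strict convexity of $\Lambda$ has this dependency backwards --- strict monotonicity of $tG(t)$ itself requires $\mu((-\infty,0))>0$ --- so the argument is repairable but mildly circular in its current arrangement.
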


The converse of Theorem~\ref{theo:exp_profile_implies_zeros} is as follows.

\begin{theorem}[Distribution of zeros implies exponential profile, Theorem 2.4 in \cite{jalowy_kabluchko_marynych_zeros_profiles_part_I}]
\label{theo:zeros_imply_exp_profile}
For every $n\in \N$ let $P_n(x) = \sum_{k=0}^n a_{k:n} x^k$ be a polynomial of degree at most $n$ with real, nonpositive roots. If $\lsem P_n\rsem_n$ converges weakly to some probability measure $\mu$ on $[-\infty, 0]$, then the following hold.
\begin{enumerate}[(a)]
\item Define $\underline{m}:= \mu(\{0\})$ and $\overline{m} := 1-\mu(\{-\infty\})$ and observe that $0\leq \underline{m} \leq \overline{m}\leq 1$.  There exists a strictly concave and infinitely differentiable function $g:(\underline{m},\overline{m})\to (-\infty,0]$ such that
\begin{equation}\label{eq:zeros_imply_exp_profile_claim}
\sup_{(\underline{m}+\eps) n \leq k  \leq (\overline{m}-\eps)n}
\left|\frac 1n  \log \frac {a_{k :n}}{P_n(1)} -g\left(\frac k  n\right)  \right| \ton 0,
\end{equation}
for all sufficiently small $\eps > 0$ as well as
\begin{equation}\label{eq:divergence_outside_polys}
\sup_{0\leq k  \leq (\underline{m}-\eps)n} \frac 1n   \log \frac {a_{k :n}}{P_n(1)} \ton  -\infty,
\qquad
\sup_{(\overline{m}+\eps)n \leq k  \leq n} \frac 1n   \log \frac {a_{k :n}}{P_n(1)} \ton  -\infty.
\end{equation}
\item If $\underline{m}  < \overline{m}$, then the Cauchy transform $G$ of $\mu$ is such that $(0,+\infty)\ni t\mapsto tG(t)$ is the inverse of $(\underline{m},\overline{m})\ni\alpha\mapsto \eee^{-g'(\alpha)}$.
\item If $\underline{m}=\overline{m}$, then~\eqref{eq:zeros_imply_exp_profile_claim} is a void statement but~\eqref{eq:divergence_outside_polys} holds true.
\end{enumerate}
\end{theorem}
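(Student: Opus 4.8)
The plan is to recognize the normalized coefficients $a_{k:n}/P_n(1)$ as the probability mass function of a sum of independent Bernoulli variables, and then to read off the profile $g$ as the associated large‑deviation rate function. Since $P_n$ has only nonpositive real roots, write $P_n(x)=a_{d:n}\prod_{j=1}^{d}(x+s_{j,n})$ with $d=\deg P_n$ and $s_{j,n}\ge 0$ (the roots being $-s_{j,n}$). Setting $p_{j,n}=1/(1+s_{j,n})\in(0,1]$, a direct expansion gives
\[
\frac{P_n(x)}{P_n(1)}=\prod_{j=1}^{d}\frac{x+s_{j,n}}{1+s_{j,n}}=\prod_{j=1}^{d}\big((1-p_{j,n})+p_{j,n}x\big)=\E\big[x^{S_n}\big],
\]
where $S_n=\sum_{j=1}^{d}B_{j,n}$ with independent $B_{j,n}\sim\Bern(p_{j,n})$. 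Comparing coefficients yields the key identity $a_{k:n}/P_n(1)=\P(S_n=k)$, so that \eqref{eq:zeros_imply_exp_profile_claim}--\eqref{eq:divergence_outside_polys} become precise local large‑deviation statements for the lattice sum $S_n$.

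For $\lambda\in\R$ one checks $\log\big((1-p)+p\,\eee^{\lambda}\big)=\log(\eee^{\lambda}-z)-\log(1-z)$ with $z=-s=-(1-p)/p\le 0$, so
\[
\frac1n\log \E\big[\eee^{\lambda S_n}\big]=\int_{[-\infty,0]}\log\frac{\eee^{\lambda}-z}{1-z}\,\lsem P_n\rsem_n(\dd z).
\]
The integrand extends continuously to $[-\infty,0]$ (value $0$ at $-\infty$, value $\lambda$ at $0$) and is bounded, so $\lsem P_n\rsem_n\toweak\mu$ gives $\tfrac1n\log\E[\eee^{\lambda S_n}]\to\varphi(\lambda):=\int\log\frac{\eee^{\lambda}-z}{1-z}\,\mu(\dd z)$. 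Differentiating under the integral, $\varphi'(\lambda)=\eee^{\lambda}G(\eee^{\lambda})$, which ties $\varphi$ directly to the Cauchy transform $G$ of $\mu$, while $\varphi''(\lambda)=\int\frac{-z\,\eee^{\lambda}}{(\eee^{\lambda}-z)^2}\,\mu(\dd z)>0$ exactly when $\mu((-\infty,0))=\overline m-\underline m>0$. By dominated convergence and \eqref{eq:atom_at_infinity_over_R}, $\varphi'(\lambda)\to\underline m$ as $\lambda\to-\infty$ and $\varphi'(\lambda)\to\overline m$ as $\lambda\to+\infty$, so in the case $\underline m<\overline m$ the map $\varphi'$ is a smooth increasing bijection of $\R$ onto $(\underline m,\overline m)$. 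I then define $g$ on $(\underline m,\overline m)$ as the Legendre dual $g(\alpha):=\inf_{\lambda}\big(\varphi(\lambda)-\lambda\alpha\big)=\varphi(\lambda_\alpha)-\lambda_\alpha\alpha$, where $\varphi'(\lambda_\alpha)=\alpha$. The envelope identity gives $g'(\alpha)=-\lambda_\alpha$, i.e. $\eee^{-g'(\alpha)}=\eee^{\lambda_\alpha}=:t$, and then $tG(t)=\eee^{\lambda_\alpha}G(\eee^{\lambda_\alpha})=\varphi'(\lambda_\alpha)=\alpha$, which is precisely the inversion asserted in part (b). Strict concavity and infinite differentiability of $g$ follow from those of $\varphi$ via the inverse function theorem, $g$ is finite on the open interval, and $g\le 0$ since $\P(S_n=k)\le 1$. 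If $\underline m=\overline m$, then $\varphi$ is affine, the domain of $g$ is empty, and only the divergence statement survives, which is case (c).

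It remains to upgrade convergence of $\tfrac1n\log\E[\eee^{\lambda S_n}]$ to pointwise‑in‑$k$ asymptotics. For the upper bound, Markov's inequality yields $\tfrac1n\log\P(S_n=k)\le\inf_{\lambda}\big(\varphi_n(\lambda)-\lambda k/n\big)$ with $\varphi_n=\tfrac1n\log\E\eee^{\lambda S_n}$, and passing to the limit gives $\limsup\le g(k/n)$, uniformly for $k/n\in[\underline m+\eps,\overline m-\eps]$. For the matching lower bound I exponentially tilt each $B_{j,n}$ at parameter $\lambda_\alpha$, making $\E[S_n]\approx\alpha n$, and apply a local central limit theorem to the tilted (bounded, independent, non‑identically distributed) summands to obtain $\P(S_n=k)\sim(2\pi\sigma_n^2)^{-1/2}\exp\big(n\varphi_n(\lambda_\alpha)-\lambda_\alpha k\big)$ with $\sigma_n^2\asymp n$, whence $\tfrac1n\log\P(S_n=k)\to g(\alpha)$; uniformity over compact subintervals follows from uniform control of $\varphi_n,\varphi_n',\varphi_n''$. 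Finally, \eqref{eq:divergence_outside_polys} is immediate from the Chernoff bound together with the asymptotics $\varphi(\lambda)=\underline m\,\lambda+O(1)$ as $\lambda\to-\infty$ and $\varphi(\lambda)=\overline m\,\lambda+O(1)$ as $\lambda\to+\infty$, which force $I(\alpha)=\sup_{\lambda}(\lambda\alpha-\varphi(\lambda))=+\infty$ for $\alpha\notin[\underline m,\overline m]$.

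I expect the main obstacle to be the last step: establishing the local limit theorem \emph{uniformly} in the tilting parameter and, in particular, handling the nearly degenerate summands whose success probabilities $p_{j,n}$ approach $0$ or $1$ (roots escaping to $-\infty$ or approaching $0$), which contribute negligible variance. One must verify a Lindeberg/aperiodicity‑type condition ensuring $\sigma_n^2\to\infty$ and that the lattice span of $S_n$ is $1$; here the strict convexity $\varphi''>0$ secured above, equivalently $\mu((-\infty,0))>0$, is exactly what guarantees enough genuinely non‑degenerate summands for the local central limit theorem to apply.
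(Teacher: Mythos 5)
Your overall strategy is sound and, as far as a comparison is possible, it is the same mechanism that underlies the cited result: the present paper does not reprove Theorem~\ref{theo:zeros_imply_exp_profile} (it is imported from Part~I), but the Poisson--binomial representation $a_{k:n}/P_n(1)=\P(S_n=k)$ combined with exponential tilting and Legendre duality is exactly the structure reflected in the paper's own relations~\eqref{eq:psi_for_polys} and~\eqref{eq:main_polys_converse_assumption_p_n(1)}: your $\varphi(\lambda)$ is $\Psi(\eee^{\lambda})$, and your $g$ is the stated Legendre dual after the $P_n(1)$-normalization makes $M_g=0$. Two points need attention. First, the obstacle you flag at the end is genuinely fillable, and by your own device: $\varphi_n''(\lambda)=\int_{[-\infty,0]}\frac{-z\,\eee^{\lambda}}{(\eee^{\lambda}-z)^2}\,\lsem P_n\rsem_n(\dd z)$ is itself an integral of a bounded continuous function on $[-\infty,0]$ vanishing at both endpoints $z=0$ and $z=-\infty$, so weak convergence gives $\varphi_n''(\lambda)\to\varphi''(\lambda)>0$ locally uniformly in $\lambda$ (note this does \emph{not} follow merely from convexity of $\varphi_n$, which only yields $C^1$-convergence). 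Hence the tilted variance satisfies $\sigma_n^2(\lambda)=n\varphi_n''(\lambda)\asymp n$ uniformly over the compact set of tilts $\{\lambda_\alpha:\alpha\in[\underline m+\eps,\overline m-\eps]\}$, and aperiodicity is automatic for Bernoulli convolutions since $\lvert\E\,\eee^{\ii\theta S_n^{(\lambda)}}\rvert\le\exp\bigl(-(1-\cos\theta)\,\sigma_n^2(\lambda)\bigr)$ suppresses all frequencies bounded away from $0$ in $[-\pi,\pi]$. Alternatively, you can avoid the local CLT entirely: by Newton's inequalities the pmf of $S_n^{(\lambda)}$ is log-concave, its mode lies within $1$ of its mean and carries mass at least $1/(n+1)$, which already gives the required $\eee^{o(n)}$ lower bound, and log-concavity yields the uniformity in $k$ for free. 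If you tilt at the limiting $\lambda_\alpha$ rather than at the solution of $\varphi_n'(\lambda)=k/n$, the mean mismatch is $o(n)$ and $(o(n))^2/\sigma_n^2(\lambda)=o(n)$, so it is harmless at exponential scale.

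Second, a small but genuine repair: your claim $\varphi(\lambda)=\underline m\,\lambda+O(1)$ as $\lambda\to-\infty$ is false in general, since the remainder $\int_{[-\infty,0)}\log\frac{\eee^{\lambda}-z}{1-z}\,\mu(\dd z)$ can tend to $-\infty$ (this happens whenever $\int\log\lvert z\rvert\,\mu(\dd z)=-\infty$, e.g.\ when $\mu$ puts heavy mass near $0$). What is true, and suffices, are the one-sided bounds $\varphi(\lambda)\le\underline m\,\lambda$ for $\lambda\le0$ and $\varphi(\lambda)\le\overline m\,\lambda$ for $\lambda\ge0$, because the integrand is $\le 0$ (resp.\ $\le\lambda$) away from the relevant atom. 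Moreover, Chernoff must be run at fixed $\lambda$ using $\varphi_n$, with limits taken in the right order: for $k\le(\underline m-\eps)n$ and $\lambda\le0$ one has $\frac1n\log\P(S_n=k)\le\varphi_n(\lambda)-\lambda(\underline m-\eps)$, so $\limsup_n\sup_k\frac1n\log\P(S_n=k)\le\varphi(\lambda)-\lambda(\underline m-\eps)\le\lambda\eps$, and only then let $\lambda\to-\infty$; symmetrically on the right. One last bookkeeping item: to know $a_{k:n}>0$ on the range in~\eqref{eq:zeros_imply_exp_profile_claim} for large $n$, use the portmanteau theorem on the closed sets $\{0\}$ and $\{-\infty\}$ to get $\limsup_n \operatorname{mult}_0(P_n)/n\le\underline m$ and $\liminf_n \deg(P_n)/n\ge\overline m$. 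With these adjustments your argument is complete, including part~(c), where $\varphi(\lambda)=\underline m\,\lambda$ is affine, the domain of $g$ is empty, and only~\eqref{eq:divergence_outside_polys} survives.
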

A somewhat weaker version of Theorem~\ref{theo:zeros_imply_exp_profile} appeared in the paper of Fano, Ortolani, Van Assche~\cite{fano_ortolani_van_assche_orthogonal}. We refer to~\cite{jalowy_kabluchko_marynych_zeros_profiles_part_I} for a discussion of further related references.

\begin{remark}\label{rem:positive_zeros}
Theorem~\ref{theo:exp_profile_implies_zeros} is not directly applicable if the real-rooted polynomials $(P_n)$ are allowed to have positive roots. However, if the positive roots of the polynomials $(P_n)$ are uniformly bounded by some $A\geq 0$, Theorem~\ref{theo:exp_profile_implies_zeros} can be applied to the shifted polynomials $\hat{P}_n(x)=P_n(x+A)$. If this shifted sequence possesses an exponential profile, we can find its limiting empirical zero distribution $\hat\mu$. The limiting empirical zero distribution $\mu$ of the original sequence is then $\mu(\cdot)=\hat{\mu}(\cdot - A)$.
\end{remark}

\subsection{Logarithmic and normalized logarithmic potentials} The exponential profile $g$ is also connected to the normalized logarithmic potential of $\mu$. It is proved in \cite{jalowy_kabluchko_marynych_zeros_profiles_part_I} that under the assumptions of Theorem~\ref{theo:exp_profile_implies_zeros},
\begin{equation}\label{eq:main_polys_converse_assumption_p_n(1)}
\lim_{n\to\infty}\frac{\log P_n(1)}{n}=\sup_{\alpha\in (\underline{m},\overline{m})} g(\alpha) = :M_g
\end{equation}
and the function $-g(\cdot) + M_g$ is the Legendre transform of $u\mapsto \Psi(\eee^u)$, where
\begin{equation}\label{eq:psi_for_polys}
\Psi(t)=\int_{(-\infty,0]}\log \left(\frac{t-z}{1-z}\right) \mu({\rm d}z),\quad t>0,
\end{equation}
This means that $g(\alpha) = M_g+\inf_{u\in \R} ( \Psi(\eee^u)-\alpha u )$ for all $\alpha \in (\underline{m},\overline{m})$. The function $\Psi$ can be analytically continued to the domain $\C\bsl \mathbb(-\infty,0]$. Recall that in this domain the usual logarithmic potential $U$ of $\mu$ is defined by
$$
U(t):=\int_{(-\infty,0]}\log |t-z|\mu({\rm d}z)=\Re\left(\int_{(-\infty,0]}\log (t-z)\mu({\rm d}z)\right),\quad t\in \C\bsl \mathbb(-\infty,0],
$$
and is infinite if $\mu(\{-\infty\})>0$ (and can also be infinite even if $\mu(\{-\infty\})=0$). On the other hand, $\Psi$ is well-defined for any measure $\mu$ on $[-\infty,0]$. If $U$ is finite, then of course
$$
\Psi(t)=U(t)-U(1),\quad t\in \C\bsl \mathbb(-\infty,0].
$$
The (normalized) logarithmic potential and the Cauchy transform are related via $2\frac{\dd}{\dd t}U=2\frac{\dd}{\dd t}\Psi=G$, where $t$ is a complex variable and $\frac{\dd}{\dd t}=\frac 1 2\big(\frac{\partial}{\partial \Re(t)}-\ii\frac{\partial}{\partial \Im(t)}\big)$ is the complex (Wirtinger) derivative. Restricted to the real-valued argument $t>0$, both logarithmic potentials $\Psi$ and $U$ are antiderivatives of $G$, satisfying $\Psi^{\prime}(t)=U'(t)=G(t)$ for $t>0$.

\section{Overview of the main results}
The main contribution of this article is to provide a unified reference for the limiting empirical zero distributions of a broad class of real-rooted polynomials. While many of the individual results are known, this work brings them together under a common framework using a universal proof method based on exponential profiles.

A brief summary of our findings is presented in Table~\ref{tab:table1}. The second column lists the coefficients of $P_n$, while the third column displays the (typically simpler) function $\eee^{-g'}$, where $g$ is the exponential profile associated with the sequence $(P_n)$.\footnote{This function is closely related to the $S$-transform of $\mu$, see \cite[Lemma 6.1]{jalowy_kabluchko_marynych_zeros_profiles_part_I}, \cite{arizmendi2024s} and Equation~\eqref{eq:S_mu-via-profiles} below.} The symbol $\leftrightarrow$ indicates that the profile corresponds not directly to $(P_n)$, but to related polynomials obtained through simple transformations (like a scaling, shift or reflection), as the Hermite, Legendre and Gegenbauer polynomials do not have nonnegative coefficients. The final column presents several ways of characterizing the limiting empirical zero distribution $\mu$, such as by name, moments, Lebesgue density, or Cauchy transform. Additional details regarding the notation used, further properties and references to earlier discoveries can be found in the respective subsections.

\begin{table}\label{fig:table}\renewcommand{\arraystretch}{1.25}
\begin{tabular}{c|c|c|c}
Polynomial $P_n(z)$ & Coefficient $a_{k:n}$ & profile $\rightsquigarrow \eee^{-g'(\alpha)}$ & distribution $\mu$\\
\hline
Stirling & $\genfrac{[}{]}{0pt}{}{n}{k} n^k$ & $\frac{\alpha}{w_S(\alpha)}$
 & Unif$_{[-1,0]}$\\
\hline
Touchard $T_n(nz)$& $\genfrac{\{}{\}}{0pt}{}{n}{k} n^k$ & $\frac {1}{w_T(\alpha)\eee^{w_T(\alpha)}}$
 & $\int x ^k d\mu=\frac{(-k)^k}{(k+1)!}$\\
\hline
Fubini $F_n(z)$ & $\genfrac{\{}{\}}{0pt}{}{n}{k} k!$  & $\frac 1 {\eee^{w_T(\alpha)}-1}$ & $ \frac {dx} {|x| (x+1) \left(\pi^2 + \log^2 \left|1 + \frac 1x\right|\right)}$\\
\hline
Eulerian $E_n(z)$ & $\genfrac{\langle}{\rangle}{0pt}{}{n}{k}$ & $\eee^{w_E(\alpha)}$ & $-\log$-Cauchy\\
\hline
gen.~Narayana & $\binom{n}{k}^\gamma$ & $\left(\frac{\alpha}{1-\alpha}\right)^\gamma$ & $G_{B,\gamma}(z) = \frac 1 {z(z^{-1/\gamma}+1)}$\\
\hline
Laguerre $L_n^{(\gamma n)}(nz)$ & $(-1)^k \binom{n+\gamma}{n-k}\frac{n^k}{k!}$ & $\frac{\alpha(\alpha+\gamma)}{1-\alpha} $ &$\mathsf{MP}_{\gamma+1,\frac{1}{\gamma +1}}$\\
\hline
Hermite He$_n(\sqrt n z)$ & $\frac{(-1)^k}{k!  2^k} \cdot \frac{n^{n/2-k}}{(n-2k)!}$& $\leftrightarrow\frac{\alpha^2}{1-\alpha} $ & $\mathsf{sc}_1=\frac{dx}{2\pi}\sqrt{4-x^2}$\\
\hline
Legendre & $\binom{n}{k}\binom{\frac{n+k-1}{2}}{n}$& $\leftrightarrow\frac{\alpha^2}{1-\alpha^2}$ & arcsine$_{[-1,1]}$\\
\hline
Gegenbauer & $(-1)^{\frac{n-k}2}\frac{\Gamma(\gamma +\frac{n+k}2)}{k!(\frac{n-k}2)!}2^k$ & $\leftrightarrow\frac{\alpha(\gamma+\alpha-1/2)}{(1-\alpha)(2\gamma+\alpha)}$ & $c\frac{(b-x)(x-a)}{1-x^2}dx$\\
\hline
little $q$-Laguerre & $\prod_{j=1}^k\frac{q^{j-n}-q}{(1-aq^j)(1-q^j)}$ &	$\frac{(1-a\eee^{-\lambda \alpha})(1-\eee^{-\lambda \alpha})}{\eee^\lambda \eee^{-\lambda\alpha}-1}
$ & $p_{a|\lambda}(x){\rm d}x$, \eqref{eq:p_qa} \\
\hline
\end{tabular}
\vspace{2mm}
\caption{An overview of some explicit examples of polynomials $P_n$ in the first column and our findings on their limiting empirical zero distributions. Further details on notation used can be found in Section~\ref{sec:classical_ensembles}.}
\label{tab:table1}
\end{table}

Our general approach to filling Table~\ref{tab:table1}, which can also be applied to any sequence of real-rooted polynomials $(P_n)$ with nonpositive roots, can be briefly summarized as follows. Given such a sequence $(P_n)$, the first step is to pass to a normalized polynomial $\widetilde{P}_n(x) = b_n P_n(a_n x)$ using appropriate scaling factors $a_n$ and $b_n$. These sequences $(a_n)$ and $(b_n)$ are chosen so that $\widetilde{P}_n$ admits an exponential profile, meaning that~\eqref{eq:exp_profile_definition} holds for some function $g = g_P$. If such a normalization is possible, then Theorem~\ref{theo:exp_profile_implies_zeros} guarantees that the empirical measures of the roots of $\widetilde{P}_n$ converge to a limiting measure $\mu = \mu_P$. The second step is to characterize $\mu$ via its Cauchy transform $G$, which can be recovered from the profile $g$ by inverting the function $\alpha \mapsto \eee^{-g^{\prime}(\alpha)}$. On the third step, knowing the Cauchy transform $G$, we calculate the moments, the logarithmic potential of $\mu$ and the Lebesgue density using the Stieltjes--Perron inversion formula~\eqref{eq:Perron_inversion}. In some situations not only $t\mapsto tG(t)$ but also the Cauchy transform $G$ itself can be written as an inverse of some simple function $f$. In this case, a classic formula for the antiderivative of an inverse function
\begin{equation}\label{eq:antiderivative_inverse}
\int f^{\leftarrow}(t){\rm d}t=tf^{\leftarrow}(t)-F(f^{\leftarrow}(t))+C,
\end{equation}
can be used to calculate the logarithmic potential of $\mu$. In this formula $F$ is the antiderivative of $f$ and $f^{\leftarrow}$ denotes the inverse function to $f$.

\section{Classical polynomials}\label{sec:classical_ensembles}

\subsection{Stirling numbers of the first kind}
Consider the polynomials
\begin{equation}\label{eq:Stirling_polynomials1}
S_n(x):=x(x+1)\cdots(x+n-1)=\sum_{k=1}^{n}\stirling{n}{k}x^k,
\end{equation}
where $\stirling{n}{k}$ is the unsigned Stirling number of the first kind. Obviously, the sequence of probability measures $\lsem S_n(nx)\rsem_n$ converges weakly to the uniform distribution $\mu_S=\unif_{[-1,0]}$ on $[-1,0]$ with the Cauchy transform
$$
G_S(t)=\log \left(1+\frac{1}{t}\right),\qquad t\in  \C \bsl [-1,0].
$$
As a warm up example, let us check this trivial result by applying Theorem~\ref{theo:exp_profile_implies_zeros} to the polynomials $S_n(nz)/n^n$. By the classic logarithmic asymptotic
of the Stirling numbers of the first kind, see, for example, Eq.~(5.7) in~\cite{moser_wyman}, we have
$$
g_S(\alpha):=\lim_{n\to\infty}\frac{1}{n}\log \left(\stirling{n}{\lfloor \alpha n\rfloor} n^{\lfloor \alpha n\rfloor-n}\right)=-1+\alpha+(1-\alpha)\log \alpha + w_S+(\alpha-1)\log w_S,\quad \alpha\in (0,1),
$$
where $w_S=w_S(\alpha)$ is a unique solution in $(0,\infty)$ of the equation
\begin{equation}\label{eq:w_0_define}
\frac{w_S}{\eee^{w_S}-1}=\alpha.
\end{equation}
Taking the derivative $g^{\prime}_S$ and simplifying the expression using~\eqref{eq:w_0_define} we obtain $\eee^{-g^{\prime}_S(\alpha)}=\alpha/w_S(\alpha)=(\eee^{w_S(\alpha)}-1)^{-1}=:L_S(w_S(\alpha))$, where $L_S(t):=(\eee^t-1)^{-1}$. Recalling that $t\mapsto tG_S(t)$ is the inverse of $\alpha\mapsto \eee^{-g^{\prime}_S(\alpha)}$ we conclude that
$$
G_S(t)=t^{-1}w_S^{\leftarrow}(L_S^{\leftarrow}(t))=\log \left(1+\frac{1}{t}\right),\quad t>0.
$$

\subsection{Touchard polynomials and Stirling numbers of the second kind}
These polynomials, also called the \textit{one-variable Bell} or the \textit{exponential polynomials} are defined by
$$
T_n(x) = \sum_{k=1}^n \stirlingsec{n}{k} x^k,
$$
where $\stirlingsec{n}{k}$ is the Stirling number of the second kind. There are several equivalent definitions of Touchard polynomials including the following ones:
\begin{equation}\label{eq:touchard_def1}
T_n(x)
=
\eee^{-x} \sum_{\ell=0}^\infty \frac{\ell^n x^\ell}{\ell!}
=
n! [t^n] \eee^{x (\eee^t-1)}
=
\eee^{-x} \left(xD\right)^n \eee^x
=
B_n(x,\ldots,x),
\end{equation}
where $B_n(z_1,\ldots,z_n)$ is the $n$-th Bell polynomial and we abbreviated the derivative with $D=\frac{\dd }{\dd x}$. We refer to the books~\cite[Section~6.1]{graham_etal_book} and~\cite{mezo_book} for an introduction to Stirling numbers of both kinds.  Also, there is a Rodrigues-type formula
$$
T_n(\eee^x) = \eee^{- \eee^x} D^n \eee^{\eee^x}.
$$
Harper~\cite{harper} showed that all roots of $T_n(x)$ are real and nonpositive and used this fact to prove a CLT for the Stirling numbers of the second kind. As for the Stirling numbers of the first kind, it is natural to consider the scaled Touchard polynomials $T_n(nx)/n^n$.

The following result is originally due to Elbert~\cite{elbert1,elbert2}, see also Proposition 2.14 in~\cite{Kabluchko+Marynych+Pitters:2024}.

\begin{theorem}\label{theo:touchard_polys_zeros}
The sequence of probability measures $\lsem T_n(nx)\rsem_n$ converges weakly to a probability measure $\mu_{T}$ which is concentrated on the interval $(-\eee,0)$ and has the following
\begin{align}
&\text{Cauchy transform:}&  &G_{T}(t) = \frac {1}{t W_0(1/t)} - 1 = \eee^{W_0(1/t)}-1,  &\quad& t\in  \C \bsl [-\eee,0],
\label{eq:theo:touchard_polys_stieltjes}\\
&\text{Lebesgue density:}&     &p_{T}(x) = \frac 1 \pi \Im \eee^{ W_0\left(\frac 1x + \ii 0\right)}, &\quad& x\in (-\eee,0),\label{eq:theo:touchard_polys_density}\\
&\text{moments:}&              &\int_{-\eee}^0 x^k \mu_{T}(\dd x) = \frac{(-k)^k}{(k+1)!}, &\quad& k\in\N,\label{eq:theo:touchard_polys_moments}\\
&\text{log-potential:}&        &U_{T} (t)  = \Re\left(\frac {1}{W_0(1/t)} + W_0(1/t) - t - 1 + \log t\right), &\quad& t\in  \C \bsl [-\eee,0].
\label{eq:theo:touchard_polys_log_pot}
\end{align}
Here, $W_0$ denotes the principal branch of the Lambert $W$-function.
\end{theorem}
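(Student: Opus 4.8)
\emph{Overall strategy.} The plan is to apply Theorem~\ref{theo:exp_profile_implies_zeros} to the scaled polynomials $T_n(nx)/n^n$, whose coefficients are $a_{k:n}=\stirlingsec{n}{k}\,n^{k-n}$. Harper's theorem supplies the hypothesis that all roots are real and nonpositive, so only the existence of an exponential profile remains. For this I would invoke the classical saddle-point (Moser--Wyman) asymptotics of the Stirling numbers of the second kind: writing $\stirlingsec{n}{k}=\frac{n!}{k!}[t^n](\eee^t-1)^k$ and extracting the $n$-th coefficient from the saddle of $k\log(\eee^t-1)-n\log t$, one obtains, for $k=\lfloor\alpha n\rfloor$ with $\alpha\in(0,1)$,
\[
g_T(\alpha)=\lim_{n\to\infty}\frac1n\log\!\left(\stirlingsec{n}{\lfloor\alpha n\rfloor}n^{\lfloor\alpha n\rfloor-n}\right)=\alpha-1-\alpha\log\alpha+\alpha\log(\eee^{w_T}-1)-\log w_T,
\]
where $w_T=w_T(\alpha)\in(0,\infty)$ is the saddle, i.e. the unique solution of $\frac{1-\eee^{-w}}{w}=\alpha$ (equivalently $\frac{\alpha w\eee^{w}}{\eee^{w}-1}=1$). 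Since every $\stirlingsec{n}{k}$ with $1\le k\le n$ is positive, the profile lives on $(\underline m,\overline m)=(0,1)$, matching the fact that $\mu_T$ has no atom at $0$ or at $-\infty$.

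\emph{Profile to Cauchy transform.} The second step is to differentiate and invert. Differentiating the displayed formula and using the saddle relation $\frac{\alpha\eee^{w_T}}{\eee^{w_T}-1}=\frac1{w_T}$, the terms proportional to $w_T'(\alpha)$ cancel (the envelope principle), leaving $g_T'(\alpha)=\log\frac{\eee^{w_T}-1}{\alpha}$ and hence $\eee^{-g_T'(\alpha)}=\frac{\alpha}{\eee^{w_T}-1}=\frac{1}{w_T\eee^{w_T}}$, as in Table~\ref{tab:table1}. By Theorem~\ref{theo:exp_profile_implies_zeros}(b), $t\mapsto tG_T(t)$ inverts $\alpha\mapsto\eee^{-g_T'(\alpha)}$. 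Setting $t=\frac{1}{w_T\eee^{w_T}}$ gives $w_T\eee^{w_T}=1/t$, so $w_T=W_0(1/t)$, and then $tG_T(t)=\alpha=\frac{1-\eee^{-w_T}}{w_T}$ yields $G_T(t)=\eee^{w_T}-1=\eee^{W_0(1/t)}-1$; the form $\frac{1}{tW_0(1/t)}-1$ is the same expression via $W_0(1/t)\eee^{W_0(1/t)}=1/t$. This also delivers $\lsem T_n(nx)\rsem_n\toweak\mu_T$, and the support is read off as the set where $1/t$ meets the branch cut $(-\infty,-1/\eee)$ of $W_0$, namely $t\in(-\eee,0)$.

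\emph{Moments, potential, density.} The moments come from the Laurent expansion $G_T(t)=\sum_{k\ge0}m_k t^{-k-1}$. Writing $z=1/t$ and applying Lagrange--B\"urmann inversion to $w=z\eee^{-w}$ with $H(w)=\eee^{w}$ gives $[z^n]\eee^{W_0(z)}=\frac1n[w^{n-1}]\eee^{w}\eee^{-nw}=\frac{(1-n)^{n-1}}{n!}$, whence $m_k=[z^{k+1}]\eee^{W_0(z)}=\frac{(-k)^k}{(k+1)!}$. The logarithmic potential is obtained from $U_T'(t)=G_T(t)$ for $t>0$ (equivalently through~\eqref{eq:antiderivative_inverse}): using $w_T'=-w_T/(t(1+w_T))$ and $\eee^{w_T}=1/(tw_T)$ one checks directly that the derivative of $\frac{1}{W_0(1/t)}+W_0(1/t)-t-1+\log t$ equals $\frac{1}{tW_0(1/t)}-1=G_T(t)$; the additive constant is fixed by $U_T(t)=\log t+o(1)$ as $t\to+\infty$ (valid for any probability measure), which the candidate already satisfies since $\frac{1}{W_0(1/t)}-t\to1$ and $W_0(1/t)\to0$, and taking real parts extends the identity off the positive axis. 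Finally, the density follows from Stieltjes--Perron~\eqref{eq:Perron_inversion}: for $x\in(-\eee,0)$ the point $1/x$ lies on the cut of $W_0$, and $p_T(x)=-\frac1\pi\lim_{y\to0+}\Im G_T(x+\ii y)=\frac1\pi\Im\eee^{W_0(1/x+\ii0)}$.

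\emph{Main obstacle.} The genuinely delicate step is the first one: upgrading the Moser--Wyman estimates to a logarithmic-scale limit that holds \emph{uniformly enough} in $k=\lfloor\alpha n\rfloor$ to yield a profile in the sense of Definition~\ref{def:exp_profile}, in particular controlling the edges $\alpha\to0^+$ and $\alpha\to1^-$ (where $w_T\to\infty$ and $w_T\to0$) so that~\eqref{eq:exp_profile_definition} returns $-\infty$ outside $[0,1]$. A secondary technical point is the branch-cut bookkeeping for $W_0$ in the density: one must verify that $1/(x+\ii0^+)$ approaches the cut $(-\infty,-1/\eee)$ from below, producing the conjugate boundary value and hence the stated sign of $p_T$.
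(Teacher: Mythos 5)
Your proposal is correct and follows essentially the same route as the paper: Harper's real-rootedness plus the Bender/Moser--Wyman logarithmic asymptotics of $\stirlingsec{n}{k}$ give the profile $g_T$ with saddle equation $1-\eee^{-w_T}=\alpha w_T$, the envelope-principle differentiation yields $\eee^{-g_T'(\alpha)}=(w_T\eee^{w_T})^{-1}$, inversion via $W_0$ gives~\eqref{eq:theo:touchard_polys_stieltjes}, and the density, moments (the paper proves your Lagrange--B\"urmann expansion as~\eqref{eq:taylor_exp_W_0} in the appendix) and potential are obtained exactly as you describe, including the conjugate boundary value of $W_0$ on the cut. Your ``main obstacle'' is actually vacuous: Definition~\ref{def:exp_profile} only requires pointwise convergence at each fixed $\alpha$, and since here $(\underline{m},\overline{m})=(0,1)$ the exterior set $[0,\underline{m})\cup(\overline{m},1]$ is empty, so no uniformity or edge control is needed.
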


\begin{remark}
In the Appendix~\ref{sec:lambert_properties} we collected some properties of the Lambert function which are relevant for us. For further information on this function we refer to the excellent sources \cite{corless_etal,mezo_book_lambert_function}.
\end{remark}

\begin{proof}

By the well-known logarithmic asymptotic of the Stirling numbers of the second kind, see, for example, Eq.~(5.9) in~\cite{bender}, we have
$$
g_T(\alpha):=\lim_{n\to\infty}\frac{1}{n}\log \left(\stirlingsec{n}{\lfloor \alpha n\rfloor} n^{\lfloor \alpha n\rfloor-n}\right)=f(w_T;\alpha)+\alpha-1-\alpha\log \alpha,\quad \alpha\in (0,1),
$$
where $f(x; \alpha) = \alpha \log (\eee^x-1) - \log x$ and $w_T=w_T(\alpha)\in (0,\infty)$ is a unique solution to the equation $1 - \eee^{-w_T} = \alpha \cdot w_T$. This solution can be explicitly written via the Lambert function as follows
$$
w_T(\alpha)=\alpha^{-1}+W_0(-\alpha^{-1}\eee^{-\alpha^{-1}}).
$$
Taking the derivative, a short calculation yields
\begin{equation}\label{eq:touchard_fubini_derivation1}
g^{\prime}_T(\alpha) = \frac{\dd}{\dd \alpha} (f(w_T(\alpha); \alpha)) - \log \alpha
=
\frac{\partial f}{\partial \alpha} (w_T(\alpha); \alpha) -\log \alpha
=
\log (\eee^{w_T(\alpha)}- 1) - \log \alpha
\end{equation}
and it follows that
$$
\eee^{-g^{\prime}_T(\alpha)} = \frac {\alpha}{\eee^{w_T}-1} = \frac{\alpha}{\eee^{w_T} (1 - \eee^{-w_T})} = \frac {1}{w_T\eee^{w_T}}=\frac{1}{W_0^{\leftarrow}(w_T(\alpha))}.
$$
Inverting this function we obtain
\begin{equation}\label{eq:Toucahrd_G_T_as_inverse}
G_T(t)=t^{-1}w_T^{\leftarrow}(W_0(1/t))=\frac{1-\eee^{-W_0(1/t)}}{tW_0(1/t)}=\frac{\eee^{W_0(1/t)}-1}{tW_0(1/t)\eee^{W_0(1/t)}}=\eee^{W_0(1/t)}-1.
\end{equation}
Formula~\eqref{eq:theo:touchard_polys_density} for the density $p_T$ follows by the Stieltjes--Perron inversion~\eqref{eq:Perron_inversion}. To prove formula~\eqref{eq:theo:touchard_polys_moments} for the moments of $\mu_{T}$, we compare the formula
$$
G_{T}(t) = \int_{[-\eee,0]} \frac{\mu_T({\rm d}z)}{t-z} = \frac{1}{t} \int_{[-\eee,0]}\sum_{k=0}^{\infty}\frac{z^{k}\mu_T({\rm d}z)}{t^k} =  \sum_{k=0}^\infty \frac{1}{t^{k+1}}\int_{[-\eee,0]} z^{k}\mu_T({\rm d}z)
\qquad |t|>\eee,
$$
to the following expansion, see Eq.~\eqref{eq:taylor_exp_W_0} in the Appendix,
$$
G_{T}(t) = \eee^{W_0(1/t)}-1 =  \sum_{k=0}^\infty\frac{ (-k)^{k}}{(k+1)!} t^{-k-1},
\qquad |t|>\eee.
$$
In order to verify formula~\eqref{eq:theo:touchard_polys_log_pot}, we differentiate and compare it to $2\frac{d}{dt}U_T=G_T$. Indeed, by~\eqref{eq:W_0_der} we obtain
\begin{align*}
2\frac{\dd}{\dd t}\Re\left(\frac {1}{W_0(1/t)} + W_0(1/t) - t + \log t+C\right)=\frac {1}{t W_0(1/t)}
\end{align*}
and $C=-1$ follows from $\lim_{t\to \infty} (U_T(t)-\log t)=0$. Alternatively, the antiderivative $\int G_{T}(t){\rm d}t$ can be calculated from~\eqref{eq:antiderivative_inverse} applied with $f(t)=G_{T}^{\leftarrow}(t)=((t+1)\log(1+t))^{-1}$. Here the second equality is a consequence of~\eqref{eq:Toucahrd_G_T_as_inverse}.
\end{proof}

\begin{remark}
Edrei~\cite{edrei_zeros_succ_der} also studied zeros of Touchard polynomials (or, more precisely, of successive derivatives of $\eee^{-\eee^z}$), but in his setting there is no rescaling of the variable by $n$.
\end{remark}

\begin{figure}[t]
	\centering
	\includegraphics[width=0.31\linewidth]{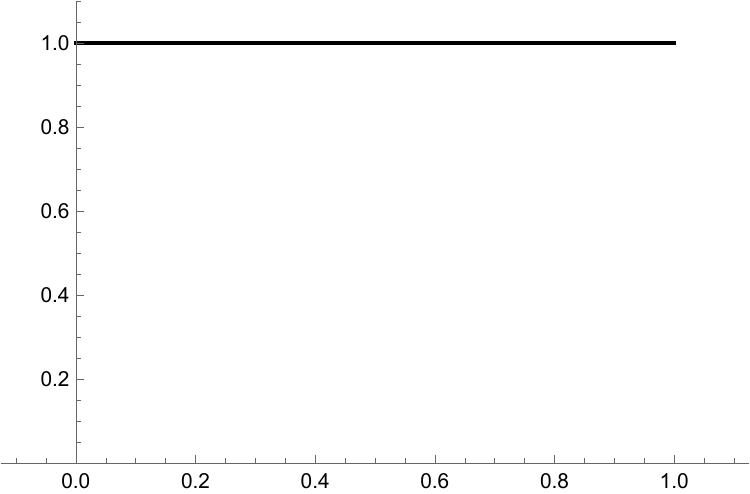}\quad \includegraphics[width=0.31\linewidth]{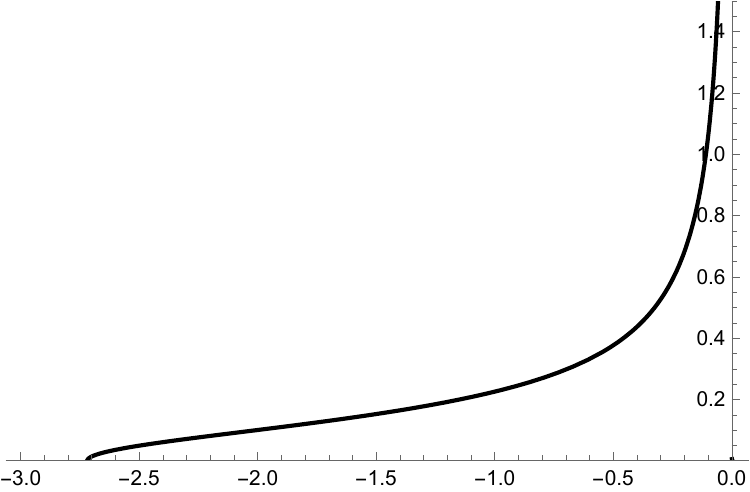}\quad \includegraphics[width=0.31\linewidth]{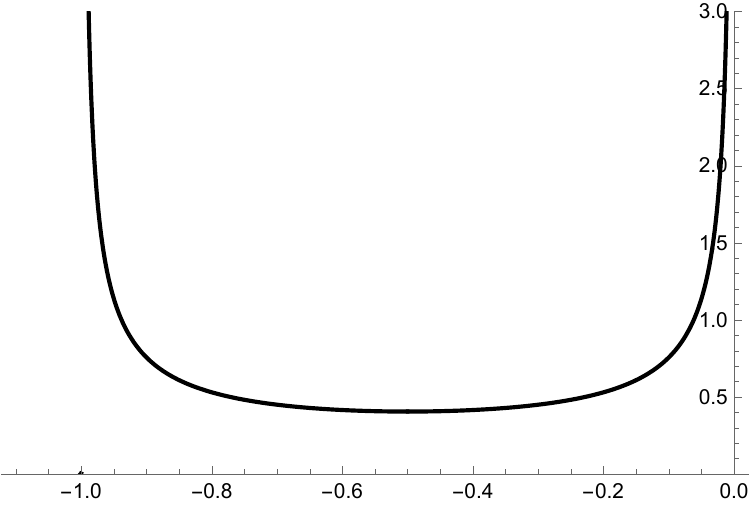}
	\caption{The limiting probability densities of zeros for generating polynomials of Stirling numbers of the first kind (left), Touchard polynomials (center) and Fubini polynomials (right).}
	\label{fig:Fubini}
\end{figure}

\subsection{Fubini polynomials}
The Fubini numbers $k!\stirlingsec{n}{k}$ count partitions of $n$ distinct elements into $k$ nonempty blocks, where the order of blocks is taken into account. Their basic properties are discussed in~\cite[Chapter~6]{mezo_book}.
Fubini polynomials are defined by
$$
F_n(x) = \sum_{k=0}^n \stirlingsec{n}{k} k! x^k.
$$
It is known~\cite[p.~147]{mezo_book} that these polynomials are real-rooted. This fact can be deduced from Rolle's theorem knowing~\cite[p. 160]{mezo_book} the formula
$$
\frac{\dd^n}{\dd x^n} \tanh x= 2^n (\tanh x - 1) F_n\left(\frac{\tanh x - 1}{2}\right).
$$

\begin{theorem}\label{theo:fubini_polys_zeros}
The sequence of probability measures $\lsem F_n(x)\rsem_n$ converges weakly to a probability measure $\mu_{F}$ on the interval $(-1,0)$ with the following
\begin{align}
&\text{Cauchy transform:}&   &G_{F}(t) = \frac 1 {t(t+1)\log (1 + \frac{1}{t})},  &\quad& t\in  \C \bsl [-1,0], \label{eq:theo:fubini_polys_stieltjes}\\
&\text{Lebesgue density:}&      &p_{F}(x) = \frac 1 {|x| (x+1) \left(\pi^2 + \log^2 \left|1 + \frac 1x\right|\right)}, &\quad& x\in (-1,0),
\label{eq:theo:fubini_polys_density}\\
&\text{moments:}&               &\int_{-1}^0 x^k \mu_{\rm{F}}(\dd x) = (-1)^k \frac{C_k}{k!}, &\quad& k\in\N_0,
\label{eq:theo:fubini_polys_moments}\\
&\text{log-potential:}&        &U_{F} (t)  = -\log\left\lvert\log\left(1+\frac{1}{t}\right)\right\rvert, &\quad& t\in  \C \bsl [-1,0].
\label{eq:theo:fubini_polys_log_pot}
\end{align}
where $C_k = \int_{0}^1 x(x+1)\ldots (x+k-1) \dd x$, $k\in \N_0$,  are the Cauchy numbers; see~\cite{merlini_sprugnoli_verri_cauchy_numbers}, \cite[pp.~293--294]{comtet_book}, \cite[Section~5.3]{mezo_book} for their properties.
\end{theorem}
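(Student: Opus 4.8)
My plan is to reuse, almost verbatim, the three-step scheme applied to the Touchard polynomials in Theorem~\ref{theo:touchard_polys_zeros}, the one genuinely new feature being that the coefficients $a_{k:n}=\stirlingsec{n}{k}k!$ grow super-exponentially (the top coefficient equals $n!$), so $(F_n)$ itself admits no finite exponential profile and must be renormalized before Theorem~\ref{theo:exp_profile_implies_zeros} can be applied. Since rescaling a polynomial by a constant leaves its zeros unchanged, I would pass to $\tilde F_n(x):=(\eee/n)^{n}F_n(x)$, so that $\lsem \tilde F_n\rsem_n=\lsem F_n\rsem_n$, and compute the profile of $\tilde F_n$. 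Writing $k=\lfloor\alpha n\rfloor$ and combining the logarithmic asymptotic of $\stirlingsec{n}{k}$ used in the proof of Theorem~\ref{theo:touchard_polys_zeros} with Stirling's formula $\log k!=k\log k-k+O(\log k)$, one finds $\tfrac1n\log a_{k:n}=\log n-1+f(w_T(\alpha);\alpha)+o(1)$ uniformly on compact subsets of $(0,1)$, where $f(x;\alpha)=\alpha\log(\eee^{x}-1)-\log x$ and $w_T(\alpha)$ is exactly the function from Theorem~\ref{theo:touchard_polys_zeros}, defined by $1-\eee^{-w_T}=\alpha w_T$. The normalization contributes $\tfrac1n\log(\eee/n)^n=1-\log n$, which cancels the divergent part precisely and leaves the profile
$$
g_F(\alpha)=f(w_T(\alpha);\alpha)=\alpha\log(\eee^{w_T(\alpha)}-1)-\log w_T(\alpha),\qquad\alpha\in(0,1).
$$
Verifying that the $\log n$ and $\alpha\log\alpha$ terms cancel exactly and uniformly is the one slightly delicate bookkeeping step, and I regard it as the main obstacle; everything downstream is mechanical.

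With the profile in hand, I would differentiate using the observation (already exploited for Touchard) that $w_T(\alpha)$ is the critical point of $x\mapsto f(x;\alpha)$, since $\partial_x f=\alpha\tfrac{\eee^x}{\eee^x-1}-\tfrac1x$ vanishes exactly when $1-\eee^{-x}=\alpha x$. Hence only the explicit $\alpha$-dependence survives, $g_F'(\alpha)=\partial_\alpha f(w_T(\alpha);\alpha)=\log(\eee^{w_T(\alpha)}-1)$, so that $\eee^{-g_F'(\alpha)}=(\eee^{w_T(\alpha)}-1)^{-1}$, matching Table~\ref{tab:table1}; the additive constant in the normalization is immaterial here, as $G_F$ depends only on $g_F'$. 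This function is a strictly increasing bijection of $(0,1)$ onto $(0,\infty)$, so Theorem~\ref{theo:exp_profile_implies_zeros} applies (the Fubini polynomials are real-rooted with nonnegative coefficients, hence have nonpositive zeros): $\lsem F_n\rsem_n\toweak\mu_F$ with $\underline{m}=\mu_F(\{0\})=0$ and $1-\overline{m}=\mu_F(\{-\infty\})=0$. To obtain $G_F$ I invert $\alpha\mapsto\eee^{-g_F'(\alpha)}$: putting $w=w_T(\alpha)$ and $t=(\eee^{w}-1)^{-1}$ gives $\eee^{w}=1+1/t$, i.e.\ $w=\log(1+1/t)$ and $\eee^{-w}=t/(t+1)$; substituting into $tG_F(t)=\alpha=(1-\eee^{-w})/w$ yields
$$
G_F(t)=\frac{1}{t(t+1)\log\!\left(1+\tfrac1t\right)},
$$
which is analytic off $[-1,0]$, confirming both $\supp\mu_F\subseteq[-1,0]$ and~\eqref{eq:theo:fubini_polys_stieltjes}.

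The remaining three formulas then follow by the routine machinery. For the density I would apply the Stieltjes--Perron inversion~\eqref{eq:Perron_inversion}: for $x\in(-1,0)$ one has $1+1/x<0$, and as $t=x+\ii y\to x+\ii 0$ the point $1+1/t$ approaches the negative real axis from below, so $\log(1+1/t)\to\log\lvert1+1/x\rvert-\ii\pi$; taking $-\tfrac1\pi\Im G_F$ and using $x(x+1)<0$ reproduces~\eqref{eq:theo:fubini_polys_density}. For the moments I would expand $G_F(t)=\sum_{k\ge0}m_k t^{-k-1}$, which after the substitution $u=1/t$ reads $\sum_{k\ge0}m_k u^{k}=\frac{u}{(1+u)\log(1+u)}$; matching this against $\int_0^1(1-v)^{-x}\dd x=\frac{-v}{(1-v)\log(1-v)}$, obtained by integrating the generalized binomial series $\sum_{k\ge0}\binom{x+k-1}{k}v^{k}=(1-v)^{-x}$ over $x\in[0,1]$, identifies $m_k=(-1)^kC_k/k!$ with the Cauchy numbers $C_k=\int_0^1 x(x+1)\cdots(x+k-1)\dd x$, which is~\eqref{eq:theo:fubini_polys_moments}. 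Finally, I would verify $U_F(t)=-\log\lvert\log(1+1/t)\rvert$ by checking $U_F'(t)=G_F(t)$ for $t>0$ (a one-line differentiation, since $\tfrac{\dd}{\dd t}\log(1+1/t)=-\tfrac1{t(t+1)}$) together with the boundary condition $\lim_{t\to\infty}(U_F(t)-\log t)=0$ that fixes the additive constant. Beyond the profile bookkeeping, the only point requiring genuine care in this last block is tracking the correct branch of the logarithm on $(-1,0)$ in the density computation.
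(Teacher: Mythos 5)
Your proposal is correct and follows essentially the same route as the paper: the profile via Bender's asymptotics for $\stirlingsec{n}{k}$ (your normalization $(\eee/n)^n$ agrees with the paper's $1/n!$ up to a subexponential factor, so both yield the same profile), the envelope computation $g_F'(\alpha)=\log(\eee^{w_T(\alpha)}-1)$, inversion to obtain $G_F$, then Stieltjes--Perron for the density, series matching for the moments (where you merely re-derive, by integrating the binomial series, the expansion of $\frac{u}{(1+u)\log(1+u)}$ that the paper cites from Mez\H{o}'s book), and the antiderivative argument for $U_F$. Note that your sign $g_F(\alpha)=+f(w_T(\alpha);\alpha)$ is the correct one: the paper's displayed ``$g_F(\alpha)=-f(w_T;\alpha)$'' is a sign typo, inconsistent with its own next step $\eee^{-g_F'(\alpha)}=(\eee^{w_T}-1)^{-1}$ and with $\sup_{\alpha}g_F(\alpha)=-\log\log 2=\lim_{n\to\infty}\frac1n\log\bigl(F_n(1)/n!\bigr)$.
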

\begin{proof}
We aim to apply Theorem~\ref{theo:exp_profile_implies_zeros} to the polynomials $F_n(x) /n!$ with the coefficients $\frac {k!}{n!}\stirlingsec{n}{k}$. Using the notation already introduced in the context of the Touchard polynomials, the profile is given by
$$
g_F(\alpha) = \lim_{n\to\infty}\frac{1}{n}\log \left(\stirlingsec{n}{\lfloor \alpha n\rfloor} \frac{\lfloor \alpha n\rfloor !}{n!}\right)=-f(w_T; \alpha),\quad \alpha\in (0,1).
$$
Using~\eqref{eq:touchard_fubini_derivation1} we conclude that
$$
\eee^{-g^{\prime}_F(\alpha)} = \frac{1}{\eee^{w_T}- 1}=L_S(w_T(\alpha)),
$$
where we recall the notation $L_S^{\leftarrow}(t)=\log(1+1/t)$ and $w_T^{\leftarrow}(x)=(1-\eee^{-x})/x$. Thus,
$$
G_F(t)=t^{-1}w_T^{\leftarrow}(L_S^{\leftarrow}(t))=\frac{1-\eee^{-L_S^{\leftarrow}(t)}}{tL_S^{\leftarrow}(t)}=\frac{1} {t(t+1)\log (1 + \frac{1}{t})},\quad t>0.
$$
The formula~\eqref{eq:theo:fubini_polys_density} for the density of $\mu_{F}$ follows from the Stieltjes--Perron inversion formula~\eqref{eq:Perron_inversion}. For $x\in (-1,0)$,
$$
p_{F} (x) = -\frac 1 \pi \Im G_{\rm{F}}(x + \ii 0) = -\frac 1\pi \Im \frac 1 {x(x+1)\left(\log \left|1 + \frac 1x\right| - \ii \pi\right)}
=
- \frac 1 {x (x+1) \left(\pi^2 + \log^2 |1 + \frac 1x|\right)}
$$
since
$$
\log \left(1 + \frac 1{x+\ii 0}\right) = \log \left(1 + \frac 1{x} - \ii 0\right) = \log \left|1 + \frac 1x\right| - \ii \pi.
$$
More precisely, part (vii) of~\cite[Proposition 2.1.2]{pastur_shcherbina_book}  yields the explicit formula for the density on $(-\infty,0)$ (and the existence of this density), whereas part~(v) implies the absence of the atom at $0$ taking into account that $G_{\rm{F}}(x) = o(1/x)$ as $x\uparrow 0$.

To prove formula~\eqref{eq:theo:fubini_polys_moments} for the moments of $\mu_F$, we compare the formula
$$
G_{F}(t) = \int_{[-1,0]} \frac{\mu_F({\rm d}z)}{t-z}=\sum_{k=0}^\infty \frac{1}{t^{k+1}}\int_{[-1,0]} z^{k}\mu_F({\rm d}z),
\qquad |t|>1,
$$
with the expansion
$$
G_{F}(t) = \frac 1 {t^2 \left(1+\frac{1}{t}\right)\log (1 + \frac{1}{t})} = \sum_{k=0}^\infty \frac{(-1)^k C_k}{k!} \frac 1 {t^{k+1}},
\qquad |t|>1,
$$
where we used the formula~\cite[p.~135]{mezo_book}:
$$
\frac 1 {(1+y)\log (1+y)} = \sum_{k=0}^\infty \frac {(-1)^k C_k}{k!} y^{k-1}, \qquad |y|<1.
$$
As before, the logarithmic potential in formula~\eqref{eq:theo:fubini_polys_log_pot} follows upon noticing that the Wirtinger derivative $\frac{d}{dt}$ of the right-hand side of~\eqref{eq:theo:fubini_polys_log_pot} coincides with $G_F/2$. The additive constant is recovered from the asymptotic relation $U_F(t)=\log t+o(1)$, as $t\to+\infty$.
\end{proof}

\subsection{Eulerian polynomials}
The Eulerian polynomials $(E_n)_{n\in\N}$, are defined by means of the formula
$$
\sum_{k=1}^\infty k^n x^k = \left(xD\right)^n \frac {1}{1-x} = \frac{x E_n(x)}{(1-x)^{n+1}},\quad n\in\mathbb{N}.
$$
Alternatively, if $\eulerian{n}{j}$ denotes an Eulerian number defined as the number of permutations of $1,\ldots, n$ with $n$ ascents, then
$$
E_n(x) = \sum_{j=0}^{n-1} \eulerian{n}{j} x^j,\quad n\in\mathbb{N}.
$$
For the basic properties of Eulerian numbers and polynomials we refer to~\cite{petersen_book_eulerian_numbers} and~\cite[\S~6.3--6.7]{mezo_book}. Polynomials $xE_n(x)$ are also known in the literature as Jonqui\`{e}re polynomials, see~\cite{gawronski_on_the_asymptotic} and also~\cite{gawronski_stadtmueller_jonquiere,gawronski_stadtmueller_jonquiere_real}. Theorem~\ref{theo:eulerian_polys_zeros} below is known and can be found in many papers including those of Sobolev and Sirazhdinov~\cite{sirazhdinov_euler_poly_root_distr,sobolev_eulerian_roots,sobolev_eulerian_roots_more} (see Theorem~2 in~\cite{sirazhdinov_euler_poly_root_distr}), Gawronski and Stadtm\"uller~\cite[Theorem~1]{gawronski_stadtmueller_jonquiere}, Gawronski~\cite[Theorem 5]{gawronski_on_the_asymptotic}; see also~\cite{faldey_gawronski,gawronski_stadtmueller_jonquiere_real} for further results and generalizations. It has recently been rediscovered by Melotti~\cite{Melotti}.

\begin{theorem}\label{theo:eulerian_polys_zeros}
The sequence of probability measures $\lsem E_n(x)\rsem_n$ converges weakly to a probability measure $\mu_{E}$ on the interval $(-\infty,0)$ with the following
\begin{align}
&\text{Cauchy transform:}&   &G_{E}(t) = \frac 1{t-1} - \frac{1}{t\log t},  &\quad& t\in  \C \bsl (-\infty,0],\label{eq:theo:euler_polys_stieltjes}\\
&\text{Lebesgue density:}&      &p_{E}(x) = \frac 1 {|x| (\pi^2 + \log^2 |x|)}, &\quad& x\in (-\infty, 0),\label{eq:theo:euler_polys_density}\\
&\text{Normalized log-potential:}&      &\Psi_{E}(t) = \log\left\lvert\frac{t-1}{\log t}\right\rvert, &\quad& t\in  \C \bsl (-\infty,0]\label{eq:theo:euler_polys_normal_log_potential}.
\end{align}
\end{theorem}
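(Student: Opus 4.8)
The plan is to follow the template already used for the Touchard and Fubini polynomials: produce the exponential profile $g_E$, read off $\eee^{-g_E'}$, invert it to obtain $G_E$, and then extract the density and the logarithmic potential. Since $\sum_{j=0}^{n-1}\eulerian{n}{j}=E_n(1)=n!$, the Eulerian numbers grow super-exponentially, so I first pass to $E_n(x)/n!$, whose coefficients $\eulerian{n}{j}/n!\in[0,1]$ leave the zeros unchanged and admit a finite profile. The cleanest route to that profile is the classical probabilistic representation of Eulerian numbers (Tanny, Carlitz): if $U_1,\dots,U_n$ are i.i.d.\ uniform on $[0,1]$ and $S_n=U_1+\cdots+U_n$, then $\eulerian{n}{j}/n!=\P(\lfloor S_n\rfloor=j)$, and Cramér's theorem identifies the profile with the negative large-deviation rate function of $S_n$. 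Writing $\Lambda(\theta)=\log\frac{\eee^\theta-1}{\theta}$ for the cumulant generating function of the uniform law, I expect
$$
g_E(\alpha)=\lim_{n\to\infty}\frac1n\log\frac{\eulerian{n}{\lfloor\alpha n\rfloor}}{n!}=-\sup_{\theta}\big(\alpha\theta-\Lambda(\theta)\big)=\Lambda(w_E(\alpha))-\alpha\,w_E(\alpha),\qquad\alpha\in(0,1),
$$
where $w_E=w_E(\alpha)$ is the optimal tilt, i.e.\ the unique solution of $\Lambda'(w)=\frac{1}{1-\eee^{-w}}-\frac1w=\alpha$. Real-rootedness of $E_n$ with only nonpositive zeros is classical (Frobenius), so Theorem~\ref{theo:exp_profile_implies_zeros} applies with $(\underline m,\overline m)=(0,1)$, consistent with $\mu_E$ carrying no atoms at $0$ or $-\infty$. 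The single delicate point is this first step: Cramér's theorem is naturally stated for half-line events, so one must check that the unit-length window $\{\lfloor S_n\rfloor=\lfloor\alpha n\rfloor\}$ does not alter the exponential order; a standard exponential-tilting (Gärtner--Ellis) argument gives this, and one could alternatively quote a known logarithmic asymptotic for the Eulerian numbers in the spirit of the Stirling-number estimates used above.

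Granting the profile, the envelope property of the Legendre transform yields $g_E'(\alpha)=-w_E(\alpha)$ (the $w_E'$-terms cancel because $\Lambda'(w_E)=\alpha$), hence $\eee^{-g_E'(\alpha)}=\eee^{w_E(\alpha)}$. By Theorem~\ref{theo:exp_profile_implies_zeros}(b) the map $t\mapsto tG_E(t)$ is the inverse of $\alpha\mapsto\eee^{w_E(\alpha)}$, so substituting $\log t=w_E(\alpha)$ into the defining equation for $w_E$ turns it into
$$
tG_E(t)=\Lambda'(\log t)=\frac{1}{1-1/t}-\frac{1}{\log t}=\frac{t}{t-1}-\frac{1}{\log t},
$$
which after dividing by $t$ is precisely the Cauchy transform~\eqref{eq:theo:euler_polys_stieltjes}. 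As $t$ runs over $(0,\infty)$ the right-hand side sweeps $(0,1)$, confirming that $\mu_E$ is supported on all of $(-\infty,0)$.

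Finally, the density comes from the Stieltjes--Perron inversion~\eqref{eq:Perron_inversion}: for $x<0$ one has $\log(x+\ii0)=\log|x|+\ii\pi$, so the term $\frac{1}{t-1}$ stays real while $\frac{1}{t\log t}$ contributes $\Im\frac{1}{x(\log|x|+\ii\pi)}=\frac{-\pi}{x(\pi^2+\log^2|x|)}$, whence $p_E(x)=-\frac1\pi\Im G_E(x+\ii0)=\frac{1}{|x|(\pi^2+\log^2|x|)}$, which is~\eqref{eq:theo:euler_polys_density}; the absence of an atom at $0$ and the verification that the total mass is one (the log-Cauchy law, via the substitution $u=\log|x|$) are checked exactly as in the Fubini case. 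For the normalized logarithmic potential I would confirm that the Wirtinger derivative of $\log\big|\frac{t-1}{\log t}\big|$ equals $G_E(t)/2$, equivalently $\Psi_E'(t)=G_E(t)$ for real $t>0$, and then fix the additive constant from $\Psi_E(1)=0$ using $\frac{t-1}{\log t}\to1$ as $t\to1$; alternatively one applies the antiderivative-of-an-inverse formula~\eqref{eq:antiderivative_inverse}. All steps beyond the large-deviation estimate are routine.
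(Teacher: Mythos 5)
Your proposal is correct and takes essentially the same route as the paper: the profile you obtain, $g_E(\alpha)=\log(\eee^{w_E}-1)-\log w_E-\alpha w_E$ with $w_E$ solving $\frac{\eee^{w_E}}{\eee^{w_E}-1}-\frac1{w_E}=\alpha$, is exactly the Bender asymptotic the paper cites, after which the steps $g_E'=-w_E$, inversion of $\alpha\mapsto\eee^{w_E(\alpha)}$ to get \eqref{eq:theo:euler_polys_stieltjes}, Stieltjes--Perron for \eqref{eq:theo:euler_polys_density}, and antidifferentiation with $\Psi_E(1)=0$ for \eqref{eq:theo:euler_polys_normal_log_potential} coincide with the paper's primary argument (the paper also gives a second proof via $E_n(x)=(x-1)^nF_n(1/(x-1))$, which you do not need). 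Your only deviation is deriving the profile yourself from the Laplace--Tanny representation $\eulerian{n}{j}/n!=\P(\lfloor S_n\rfloor=j)$ together with a tilted local large-deviation estimate instead of quoting the known asymptotic; this is a sound, self-contained substitute, since under the exponential tilt the unit window carries probability of order $n^{-1/2}$, which is subexponential, and you correctly flag this as the one delicate step.
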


\begin{remark}
Formula~\eqref{eq:theo:euler_polys_density} for $p_{E}$ implies that $\mu_{E}$ is invariant under the map $x\mapsto 1/x$. If $X$ is a random variable with a scaled Cauchy distribution having density $\mathbb{R}\ni x\mapsto (\pi^2+x^2)^{-1}$, then $\mu_{E}$ is the distribution of $-\eee^{X}$, that is, up to a sign change, is a log-Cauchy distribution. All power moments of $\mu_{E}$ are infinite. In fact, even its logarithmic potential $U_{E}(t):= \int_{-\infty}^0 \log |t-x| p_{E}(x) \dd x$ equals $+\infty$. Finally, the invariance of $\mu_{E}$ under $x\mapsto 1/x$ implies the invariance of $\mu_{F}$, the weak limit of $\lsem F_n(x)\rsem_n$ in Theorem~\ref{theo:fubini_polys_zeros}, under the map $(-1,0)\ni x\mapsto -1-x\in (-1,0)$. This is also easily seen from~\eqref{eq:theo:fubini_polys_density}. Although not directly related to Theorem~\ref{theo:eulerian_polys_zeros}, the log-Cauchy distribution $\mu_E(-\cdot)$ arises as the limit of free multiplicative self-convolutions of $\boxtimes$-infinitely divisible probability measures. For further details, we refer the reader to Theorem 4.1 in~\cite{Arizmendi_Hasebe}.
\end{remark}

\begin{proof}
As in the previous examples we note that
\begin{equation}\label{eq:Euler_proof1}
g_E(\alpha):=\lim_{n\to\infty}\frac{1}{n}\log \left(\eulerian{n}{\lfloor \alpha n\rfloor} \frac{1}{n!}\right)=-\alpha w_E-\log w_E+\log (\eee^{w_E}-1),\quad \alpha\in (0,1),
\end{equation}
where $w_E=w_E(\alpha)$ is the unique solution to
\begin{equation}\label{eq:w_1_define}
\frac{\eee^{w_E}}{\eee^{w_E}-1}-\frac{1}{w_E}=\alpha,
\end{equation}
see~\cite[Eq.~(5.7)]{bender}. A straightforward differentiation of~\eqref{eq:Euler_proof1} leads to
\begin{align*}
g^{\prime}_E(\alpha)=-w_E-\alpha w'_E-w'_E\left( \frac 1 {w_E}-\frac{\eee^{w_E}}{\eee^{w_E}-1}\right)=-w_E(\alpha).
\end{align*}
Now, Theorem~\ref{theo:exp_profile_implies_zeros} implies
$$
G_E(t)=t^{-1}w_E^{\leftarrow}(\log t)=t^{-1}\left(\frac{t}{t-1}-\frac{1}{\log t}\right)=\frac 1{t-1} - \frac{1}{t\log t},\quad t>0.
$$
The normalized log-potential $\Psi_E$ is recovered by taking the antiderivative of $G_E$ and using that $\Psi_E(1)=0$.

Here is another argument which is based on the fact that the Eulerian polynomials can be expressed through the Fubini polynomials as follows:
$$
E_n(x) = (x-1)^n F_n \left(\frac 1 {x-1}\right),
$$
see~\cite[Eq.~(6.22) on p.~156]{mezo_book}. If we denote the zeros of $F_n$ by $0, x_1,\ldots, x_{n-1}$, then the zeros of $E_n$ are $y_i:= 1 + (1/x_i)$, $i=1,\ldots, n-1$. Note that $x_i \in (-1,0)$ implies that $y_i\in (-\infty, 0)$.  Therefore, if $\xi_{F}$ is a random variable whose distribution is the weak limit of $\lsem F_n(x) \rsem_n$ appearing in Theorem~\ref{theo:fubini_polys_zeros}, then $\lsem E_n(x)\rsem_n$ converges weakly to the distribution of the random variable $\xi_{E} := 1 + (1/\xi_{F})$. Its Cauchy transform is
$$
G_{E} (t) := \E \frac{1}{t - \frac 1{\xi_{F}} - 1} = \frac 1 {t-1} - \frac 1 {(t-1)^2} \E \frac 1{\frac 1 {t-1} - \xi_{F}}
=
\frac 1 {t-1} - \frac 1 {t-1}^2 G_{F}\left(\frac 1 {t-1}\right)
=
\frac 1 {t-1} - \frac 1 {t\log t},
$$
where we used the formula for $G_{F}$ from Theorem~\ref{theo:fubini_polys_zeros}. This formula is true if $1/(t-1) \notin (-1,0)$, which is equivalent to $t\notin (-\infty, 0)$.

The formula for the density  of $\xi_{\rm{E}}$ follows from the Perron inversion formula: For $x<0$,
$$
p_{E} (x) = -\frac 1 {\pi}  \Im G_{\rm{E}}(x + \ii 0) =  \frac 1 {\pi} \Im  \frac 1 {x\log (x + \ii 0)} = \frac 1 {\pi} \Im  \frac 1 {x (\log |x| + \pi \ii)} =  - \frac 1 {x (\pi^2 + \log^2 |x|)},
$$
which completes the alternative proof.
\end{proof}

\begin{figure}[t]
	\centering
	\includegraphics[width=0.31\linewidth]{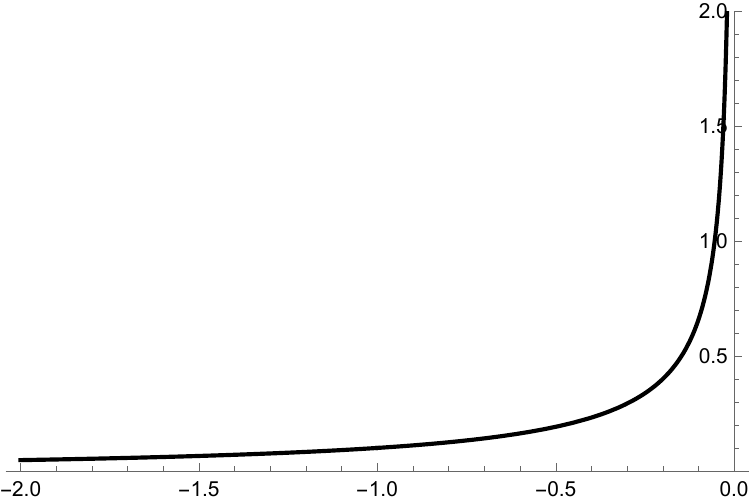}\quad \includegraphics[width=0.31\linewidth]{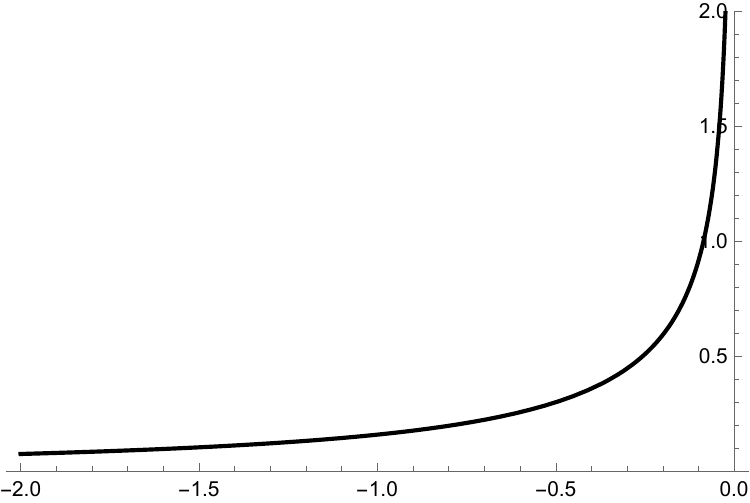}\quad \includegraphics[width=0.31\linewidth]{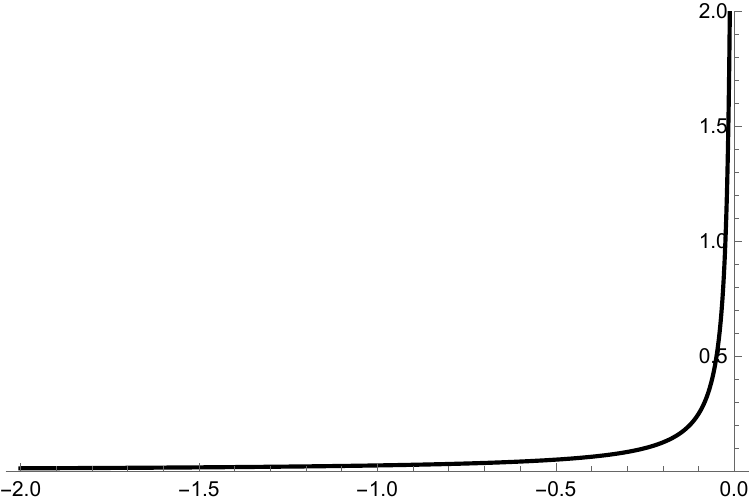}
	\caption{The limiting probability densities of zeros of Eulerian polynomials (left), Narayana polynomials for $\gamma=2$ (center) and $\gamma=10$ (right).}
	\label{fig:Narayana}
\end{figure}

\subsection{Narayana polynomials}
The Narayana numbers and the corresponding polynomials are defined by
$$
N_{n,k} = \frac{1}{k+1} \binom{n}{k} \binom{n-1}{k},
\qquad
k\in \{0, \ldots, n-1\},
\qquad
N_n(x) = \sum_{k=1}^{n} N_{n, k-1}x^k,
$$
see~\cite[Chapter~2]{petersen_book_eulerian_numbers} for their definition and properties. It is known~\cite[p. 90, Exercise 4.7]{petersen_book_eulerian_numbers} that all roots of $N_{n}$ are real (and hence nonpositive).

For any integer $\gamma\geq 2$, define also the polynomials
$$
B_{n,\gamma}(x)=\sum_{k=0}^{n} \binom{n}{k}^{\gamma}x^k,
$$
which do have only real (nonpositive) roots, see Problem 155 in Part V of Volume II~\cite{polya_book} or p.~366 in~\cite{kung_book_rota_way} (for the case $\gamma =2$, the general case follows by induction from Theorem~6.4.1 in~\cite{kung_book_rota_way}). It is easy to see that the exponential profiles of the Narayana polynomials and polynomials $B_{n,2}$ coincide.

\begin{theorem}\label{theo:narayana_polys_zeros}
For integer $\gamma\geq 2$, the sequence of probability measures $\lsem B_{n,\gamma}(x)\rsem_n$ converges weakly to a probability measure $\mu_{B,\gamma}$ on the interval $(-\infty,0)$ with the following
\begin{align}
&\text{Cauchy transform:}&   &G_{B,\gamma}(t) = \frac 1 {t(t^{-1/\gamma}+1)},  &\quad& t\in  \C \bsl (-\infty,0], \label{eq:theo:narayana_polys_stieltjes}\\
&\text{Lebesgue density:}&      &p_{B,\gamma}(x) = \frac {\sin(\pi/\gamma)} {\pi |x| (|x|^{-1/\gamma}+2\cos(\pi/\gamma)+|x|^{1/\gamma})} 
, &\quad& x\in (-\infty,0),
\label{eq:theo:narayana_polys_density}\\
&\text{log-potential:}&        &U_{B,\gamma} (t)  = \gamma\log|1+t^{1/\gamma}|, &\quad& t\in  \C \bsl (-\infty,0].
\label{eq:theo:narayana_polys_log_pot}
\end{align}
In~\eqref{eq:theo:narayana_polys_stieltjes} and~\eqref{eq:theo:narayana_polys_log_pot} we take the branch of a multi-valued function $t\mapsto t^{1/\gamma}$ which attains real values on the positive half-line. The sequence of probability measures $\lsem N_n(x)\rsem_n$ converges weakly to $\mu_{B,2}$.
\end{theorem}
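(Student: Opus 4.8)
The plan is to apply Theorem~\ref{theo:exp_profile_implies_zeros} directly to the polynomials $B_{n,\gamma}$, with no rescaling of the variable, since their roots already lie in $(-\infty,0)$ by the cited real-rootedness results (this is exactly why the statement is restricted to integer $\gamma\ge 2$). First I would compute the exponential profile of the coefficients $a_{k:n}=\binom{n}{k}^{\gamma}$. By Stirling's formula $\frac1n\log\binom{n}{\lfloor\alpha n\rfloor}\to H(\alpha):=-\alpha\log\alpha-(1-\alpha)\log(1-\alpha)$, the binary entropy, so
\[
g_{B,\gamma}(\alpha)=\lim_{n\to\infty}\frac1n\log\binom{n}{\lfloor\alpha n\rfloor}^{\gamma}=\gamma H(\alpha),\qquad\alpha\in(0,1).
\]
This $g$ is smooth and strictly concave on $(0,1)$ with $\underline m=0$ and $\overline m=1$, so Theorem~\ref{theo:exp_profile_implies_zeros}(c) will yield $\mu_{B,\gamma}(\{0\})=\mu_{B,\gamma}(\{-\infty\})=0$; the limit is therefore supported on all of $(-\infty,0)$ with no atoms, as claimed.

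Next I would differentiate, obtaining $g'_{B,\gamma}(\alpha)=\gamma\log\frac{1-\alpha}{\alpha}$ and hence $\eee^{-g'_{B,\gamma}(\alpha)}=\bigl(\frac{\alpha}{1-\alpha}\bigr)^{\gamma}$, which matches the third column of Table~\ref{tab:table1}. Since $t\mapsto tG_{B,\gamma}(t)$ is the inverse of $\alpha\mapsto\eee^{-g'_{B,\gamma}(\alpha)}$, solving $t=\bigl(\frac{\alpha}{1-\alpha}\bigr)^{\gamma}$ gives $\alpha=\frac{t^{1/\gamma}}{1+t^{1/\gamma}}$, whence
\[
G_{B,\gamma}(t)=\frac1t\cdot\frac{t^{1/\gamma}}{1+t^{1/\gamma}}=\frac{1}{t(t^{-1/\gamma}+1)},\qquad t>0,
\]
which extends analytically to $\C\bsl(-\infty,0]$ through the branch of $t^{1/\gamma}$ that is real on $(0,\infty)$.

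For the density I would use the Stieltjes--Perron inversion~\eqref{eq:Perron_inversion}. Approaching $x<0$ from the upper half-plane sends $\arg t\to\pi$, so $t^{-1/\gamma}\to|x|^{-1/\gamma}\eee^{-\ii\pi/\gamma}$; writing $r=|x|^{-1/\gamma}$ and separating real and imaginary parts of $\frac{1}{x(r\eee^{-\ii\pi/\gamma}+1)}$, the squared modulus of the denominator collapses to $r^{2}+2r\cos(\pi/\gamma)+1$, and clearing $|x|^{1/\gamma}$ reproduces~\eqref{eq:theo:narayana_polys_density} exactly. The log-potential~\eqref{eq:theo:narayana_polys_log_pot} I would verify by differentiation: with $f(t)=\gamma\log(1+t^{1/\gamma})$ holomorphic on $\C\bsl(-\infty,0]$, a direct calculation gives $f'(t)=G_{B,\gamma}(t)$, so $2\frac{\dd}{\dd t}U_{B,\gamma}=G_{B,\gamma}$, and the normalization $U_{B,\gamma}(t)=\log t+o(1)$ as $t\to+\infty$ fixes the additive constant to zero. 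Finally, for the Narayana polynomials I would observe that the coefficient $N_{n,k-1}=\frac1k\binom{n}{k-1}\binom{n-1}{k-1}$ differs from $\binom{n}{k}^{2}$ only by polynomially bounded factors and a shift of the index by $1$, neither of which affects $\frac1n\log(\cdot)$; hence $N_n$ has the same profile $2H(\alpha)$ as $B_{n,2}$, and Theorem~\ref{theo:exp_profile_implies_zeros} forces $\lsem N_n\rsem_n\to\mu_{B,2}$.

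The main obstacle is the bookkeeping with the multivalued $t^{1/\gamma}$: one must consistently use the branch real and positive on $(0,\infty)$ throughout the inversion, the density computation, and the log-potential, and in particular track the boundary value $\arg t=\pi$ correctly when evaluating $x+\ii 0$ in the Perron formula. Everything else---the Stirling asymptotics, the algebraic inversion giving $G_{B,\gamma}$, and the derivative check for $U_{B,\gamma}$---is routine.
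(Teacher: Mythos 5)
Your proposal is correct and follows essentially the same route as the paper: the profile $\gamma H(\alpha)$ via Stirling, $\eee^{-g'_{B,\gamma}(\alpha)}=(\alpha/(1-\alpha))^{\gamma}$, inversion to obtain $G_{B,\gamma}$, Stieltjes--Perron for the density, and the antiderivative/normalization check for the log-potential. Your explicit justification that the Narayana coefficients $N_{n,k-1}$ differ from $\binom{n}{k}^2$ only by polynomially bounded factors fills in what the paper dismisses as ``easy to see,'' but is not a different method.
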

\begin{proof}
The profile is given by
$$
g_{B,\gamma}(\alpha)=\lim_{n\to\infty}\frac{\gamma}{n}\log\left(\binom{n}{\lfloor n\alpha\rfloor}\right)=-\gamma(\alpha\log \alpha+(1-\alpha)\log(1-\alpha)),\quad \alpha\in (0,1).
$$
It follows that $g^{\prime}_{B,\gamma}(\alpha) =  \gamma \log (1-\alpha)-\gamma \log  \alpha$ and hence $\eee^{-g^{\prime}_{B,\gamma}(\alpha)} = (\alpha/(1-\alpha))^{\gamma}$. Inverting this function we obtain~\eqref{eq:theo:narayana_polys_stieltjes}. To prove~\eqref{eq:theo:narayana_polys_density}, we apply Perron's inversion formula. For $x<0$,
\begin{align*}
p_{B,\gamma} (x) = -\frac 1 {\pi }  \Im G_{B,\gamma}(x + \ii 0) =  -\frac 1 {\pi x} \Im  \frac{1}{1+|x|^{-1/\gamma}\eee^{-\ii\pi/\gamma}}
=\frac {|x|^{1/\gamma-1}\sin(\pi/\gamma)} {\pi (1+2\cos(\pi/\gamma)|x|^{1/\gamma}+|x|^{2/\gamma})}.
\end{align*}
Formula~\eqref{eq:theo:narayana_polys_log_pot} for the logarithmic potential follows immediately from the antiderivative of $G_{B,\gamma}(t)$.
\end{proof}

\begin{remark}
In fact, formulas~\eqref{eq:theo:narayana_polys_stieltjes}-\eqref{eq:theo:narayana_polys_log_pot} define a probability measure for all {\em real} values $\gamma>1$. This can be seen by integrating $p_{B,\gamma}$ via the change of variables $y=|x|^{1/\gamma}$, $x<0$. Also, numerical simulations suggest the following conjecture:  The polynomials $B_{n,\gamma}$ are real-rooted for every $n\in \N$ and every real $\gamma\geq 2$. On the other hand, simulations show that, for $\gamma\in (1,2)$, the polynomial $B_{n,\gamma}$ is not real-rooted if $n$ is sufficiently large.
\end{remark}

\subsection{Hypergeometric polynomials}
Let us consider a fairly general parametrized class of polynomials that covers many classical orthogonal polynomials. The results on their limiting root distribution follow those of \cite[\S 5]{martinez}, see also \cite[Theorem 3.9]{ZerosOfGenHypergeoPoly} and \cite[\S 10.3]{arizmendi2024s}.

For $i,j,n\in\N$, $\mathbf a=(a_s)_{s\le i}\in\R^i$, $\mathbf b=(b_s)_{s\le j}\in\R^j$, we define the rising and falling factorials
$$ \mathbf a ^{\overline n}:=\prod_{s=1}^i \frac{\Gamma(a_s+n)}{\Gamma(a_s)},\quad \mathbf b ^{\underline n}:=\prod_{s=1}^j \frac{\Gamma(b_s+1)}{\Gamma(b_s-n+1)}
$$
and study the (rescaled) hypergeometric polynomial
\begin{align*}
\HyperP{{\bf a}}{{\bf b}}{n}(x)&=n^{(i-j)n}\frac{(\mathbf b n)^{\underline n}}{(\mathbf a n)^{\underline n}} \ _{i+1}F_j\left(\begin{matrix}&-n,&\mathbf a n-n+1&\\
& & \mathbf b n-n+1&\end{matrix};-n^{j-i}x\right)\\
&=\sum_{k=0}^n\binom n k n^{(i-j)(n-k)} \frac{(\mathbf b n)^{\underline{n-k}}}{(\mathbf a n)^{\underline{n-k}} }	x^k.
\end{align*}
Here we used the notation
\begin{equation}\label{eq:hypergoemetric_function_def}
_{i+1}F_j\left(\begin{matrix}&-n,&\mathbf a &\\
& & \mathbf b&\end{matrix};x\right):=\sum_{k=0}^{n}\frac{(-n)^{\overline{k}}({\bf a})^{\overline{k}}}{({\bf b})^{\overline{k}}}\frac{x^k}{k!}=\sum_{k=0}^{n}\binom{n}{k}\frac{(a_1)^{\overline{k}}\cdots(a_i)^{\overline{k}}}{(b_1)^{\overline{k}}\cdots(b_j)^{\overline{k}}}(-x)^k,
\end{equation}
for a (generalized) hypergeometric polynomial. If for some choice of parameter ${\bf a}$ and ${\bf b}$, polynomials $\HyperP{{\bf a}}{{\bf b}}{n}$ have positive coefficients and nonpositive roots, at least for all large enough $n$, then we describe their limiting root distribution as follows.

\begin{theorem}\label{theo:hypergeo_polys_zeros}
Let $\mathbf a \in\R^i,\mathbf b\in\R^j$ be such that:
\begin{itemize}
\item[(i)] $a_s,b_s\notin [0,1)$;
\item[(ii)] the number of negative reals among $\{a_1,\ldots,a_i,b_1,\ldots,b_j\}$ is even;
\item[(iii)] $\HyperP{{\bf a}}{{\bf b}}{n}$ has only real nonpositive roots for all $n$ large enough.
\end{itemize}
Then, $\alpha\mapsto \frac \alpha {1-\alpha} \frac{\prod_{s=1}^j (b_s -1+\alpha)}{\prod_{s=1}^i (a_s -1+\alpha)}$ is positive and invertible on $(0,1)$ with inverse $\Phi^{{\bf a},{\bf b}}$ and $\lsem \HyperP{{\bf a}}{{\bf b}}{n}\rsem_n$ converges weakly to a distribution with the Cauchy transform $G^{{\bf a},{\bf b}}(t):=\Phi^{{\bf a},{\bf b}}(t)/t$.
\end{theorem}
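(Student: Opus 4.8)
The strategy is to apply Theorem~\ref{theo:exp_profile_implies_zeros} directly to the sequence $\HyperP{{\bf a}}{{\bf b}}{n}$, whose coefficients are $a_{k:n}=\binom nk n^{(i-j)(n-k)}(\mathbf b n)^{\underline{n-k}}/(\mathbf a n)^{\underline{n-k}}$. Assumption~(iii) supplies the nonpositivity of the roots, so the whole task reduces to verifying that $(\HyperP{{\bf a}}{{\bf b}}{n})$ admits an exponential profile $g$ on $(0,1)$ and to identifying $\eee^{-g'}$; Theorem~\ref{theo:exp_profile_implies_zeros}(b) then identifies $t\mapsto tG(t)$ with the inverse of $\alpha\mapsto \eee^{-g'(\alpha)}$, from which $G^{{\bf a},{\bf b}}(t)=\Phi^{{\bf a},{\bf b}}(t)/t$ is read off.

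First I would compute $\lim_{n\to\infty}\tfrac1n\log a_{\lfloor\alpha n\rfloor:n}$ for a fixed $\alpha\in(0,1)$ by Stirling's formula. Writing $n-k\sim(1-\alpha)n$, the binomial contributes the entropy $-\alpha\log\alpha-(1-\alpha)\log(1-\alpha)$ and carries no $n\log n$ term. Each factor $\Gamma(b_sn+1)/\Gamma((b_s-1+\alpha)n+1)$ of $(\mathbf b n)^{\underline{n-k}}$, and likewise each factor of $(\mathbf a n)^{\underline{n-k}}$, produces through Stirling a term linear in $n$ together with a term $(1-\alpha)n\log n$. The role of the prefactor $n^{(i-j)(n-k)}$ is exactly to annihilate these logarithmic-order contributions: the $\mathbf b$-factors give $+j(1-\alpha)n\log n$, the $\mathbf a$-factors in the denominator give $-i(1-\alpha)n\log n$, and $n^{(i-j)(n-k)}$ gives $(i-j)(1-\alpha)n\log n$, summing to $(1-\alpha)[(i-j)+j-i]=0$. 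Collecting the surviving linear terms yields an explicit $g(\alpha)$, and a direct differentiation (in which the additive $\pm(1-\alpha)$ pieces cancel the $\pm1$ coming from the product rule) gives
\begin{equation*}
g'(\alpha)=\log\frac{1-\alpha}{\alpha}-\sum_{s=1}^j\log\lvert b_s-1+\alpha\rvert+\sum_{s=1}^i\log\lvert a_s-1+\alpha\rvert,
\end{equation*}
so that $\eee^{-g'(\alpha)}=\frac{\alpha}{1-\alpha}\,\frac{\prod_{s=1}^j\lvert b_s-1+\alpha\rvert}{\prod_{s=1}^i\lvert a_s-1+\alpha\rvert}$.

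The step I expect to be delicate, and where assumptions~(i) and~(ii) enter, is the treatment of signs. For a parameter $b_s<0$ the falling factorial $(\mathbf b n)^{\underline{n-k}}$ is a product of negative numbers and $\Gamma(b_sn+1)$ is evaluated at a negative argument; here one works with $\lvert(\mathbf b n)^{\underline{n-k}}\rvert$, rewriting the relevant factor as the rising factorial $\Gamma((\lvert b_s\rvert+1-\alpha)n)/\Gamma(\lvert b_s\rvert n)$ of $\lvert b_s\rvert$, whose Stirling asymptotics produces the same magnitude $\log\lvert b_s-1+\alpha\rvert$ as in the positive case. Assumption~(i), $a_s,b_s\notin[0,1)$, guarantees that none of the factors $b_s-1+\alpha$ or $a_s-1+\alpha$ vanishes or changes sign as $\alpha$ ranges over $(0,1)$, so these logarithms are well defined throughout. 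Assumption~(ii) then plays a double role: the sign of $a_{k:n}$ equals $(-1)^{(n-k)(\#\{b_s<0\}+\#\{a_s<0\})}$, which is $+1$ since the exponent counts an even number of negative parameters, so the coefficients are genuinely positive and the profile framework applies; and, for the same parity reason, $(-1)^{\#\{b_s<0\}-\#\{a_s<0\}}=1$, so the signed product $\prod(b_s-1+\alpha)/\prod(a_s-1+\alpha)$ coincides with the product of absolute values above. This reconciles the magnitude computation with the clean formula in the statement and shows that $\alpha\mapsto\eee^{-g'(\alpha)}$ equals the asserted positive function.

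Finally, strict concavity of $g$ (automatic from Theorem~\ref{theo:exp_profile_implies_zeros} once~(iii) holds) makes $\alpha\mapsto\eee^{-g'(\alpha)}$ strictly increasing, hence invertible; checking that it runs from $0$ to $+\infty$ as $\alpha$ traverses $(0,1)$ shows it is a bijection of $(0,1)$ onto $(0,\infty)$ with inverse $\Phi^{{\bf a},{\bf b}}$. Theorem~\ref{theo:exp_profile_implies_zeros}(a),(b) then yield weak convergence of $\lsem\HyperP{{\bf a}}{{\bf b}}{n}\rsem_n$ to a measure whose Cauchy transform satisfies $tG(t)=\Phi^{{\bf a},{\bf b}}(t)$, i.e.\ $G^{{\bf a},{\bf b}}(t)=\Phi^{{\bf a},{\bf b}}(t)/t$, as claimed. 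Since the profile is defined on all of $(0,1)$, part~(c) gives $\mu(\{0\})=\mu(\{-\infty\})=0$, confirming that the limit is supported on $(-\infty,0)$.
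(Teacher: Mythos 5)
Your proposal is correct and follows essentially the same route as the paper: establish the exponential profile on $(0,1)$, compute $\eee^{-g'}$, and invoke Theorem~\ref{theo:exp_profile_implies_zeros}, with assumptions (i)--(ii) entering exactly as in the paper's proof to keep the factors bounded away from zero and to control signs (the paper likewise treats invertibility of $\alpha\mapsto\eee^{-g'(\alpha)}$ as automatic from the profile machinery under (iii)). The only cosmetic difference is that you evaluate the profile by Stirling's formula applied to the Gamma-ratios, rewriting negative-parameter falling factorials as rising factorials of $|b_s|$, whereas the paper writes the coefficient as a product and passes by Riemann summation to the integral $\int_0^{1-\alpha}\log\bigl(\prod_{s}(a_s-x)/\prod_{s}(b_s-x)\bigr)\,\dd x$ before differentiating --- both computations yield the same $g'$.
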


For an overview on choices of the parameters $\mathbf a ,\mathbf b$ which lead to real, nonpositive roots of hypergeometric polynomials, we refer to \cite[\S 4]{martinez}.

\begin{proof}
Another application of Stirling's formula shows that the profile is given by
\begin{align*}
g_H(\alpha)&=\lim_{n\to\infty}\frac 1 n \log\left( \binom n {\lfloor\alpha n\rfloor}  \frac{n^{-j(n-\lfloor\alpha n\rfloor)}(\mathbf b n) ^{\underline{n-\lfloor\alpha n\rfloor}}}{n^{-i(n-\lfloor\alpha n\rfloor)}(\mathbf a n)^{\underline{n-\lfloor\alpha n\rfloor}} }\right)\\
&=-\alpha \log\alpha -(1-\alpha)\log(1-\alpha)- \lim_{n\to\infty} \frac 1 n \sum_{m=0}^{n-\lfloor\alpha n\rfloor-1}\log\left( \frac{\prod_{s=1}^i (a_s -\frac m n)}{\prod_{s=1}^j (b_s -\frac mn)}\right)\\
&=-\alpha \log\alpha -(1-\alpha)\log(1-\alpha) -\int_0^{1-\alpha}\log\left( \frac{\prod_{s=1}^i (a_s -x)}{\prod_{s=1}^j (b_s -x)}\right){\rm d}x,\quad \alpha\in (0,1),
\end{align*}
by a Riemann summation in the last step. Note that the ratio under the last logarithm ought to be positive and bounded away from zero for all $x\in [0,1-\alpha]$ by the assumptions (i) and (ii). The derivative of $g_H$ is given by
\begin{align*}
g_H'(\alpha)&=\log\left(\frac {1-\alpha}{\alpha } \frac{\prod_{s=1}^i (a_s -1 +\alpha)}{\prod_{s=1}^j (b_s -1+\alpha)}\right),
\end{align*}
hence the inverse of $\eee^{-g_H^{\prime}(\alpha)}=\frac \alpha {1-\alpha} \frac{\prod_{s=1}^j (b_s -1+\alpha)}{\prod_{s=1}^i ( a_s -1+\alpha)}$ is equal to $tG^{{\bf a},{\bf b}}(t)$ by Theorem~\ref{theo:exp_profile_implies_zeros}.
\end{proof}
\begin{remark}
The positivity and invertibility of the rational function $\alpha\mapsto \frac \alpha {1-\alpha} \frac{\prod_{s=1}^j (b_s -1+\alpha)}{\prod_{s=1}^i ( a_s -1+\alpha)}$ on $(0,1)$ is a necessary condition for $\HyperP{{\bf a}}{{\bf b}}{n}$ to have nonpositive roots for all large enough $n\in\mathbb{N}$.
\end{remark}

Using Theorems 4.6 and 4.7 in~\cite{martinez} we obtain the following corollary.

\begin{corollary}\label{cor:real_zeros_hypergeo_polys}
The polynomials $\HyperP{{\bf a}}{{\bf b}}{n}$ have only nonpositive roots for all $n\geq 1$ and the sequence of probability measures $\lsem \HyperP{{\bf a}}{{\bf b}}{n}\rsem_n$ converges weakly to a probability measure whose Cauchy transform is as described in Theorem~\ref{theo:hypergeo_polys_zeros} in the following scenarios:
\begin{itemize}
\item[(a)] $i,j\geq 0$, $a_1,\ldots,a_i<0$, $b_1,\ldots,b_j\geq 1$ and $i$ is even;
\item[(b)] $j\geq i$, $b_1,\ldots,b_j\geq 1$ and $a_s\geq b_s+1$, $s\in\{1,2,\ldots,i\}$;
\end{itemize}
\end{corollary}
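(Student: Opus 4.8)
The plan is to obtain the corollary as an immediate consequence of Theorem~\ref{theo:hypergeo_polys_zeros}: in each of the two scenarios it suffices to verify the three hypotheses (i), (ii), (iii) of that theorem, since the asserted formula $G^{{\bf a},{\bf b}}(t)=\Phi^{{\bf a},{\bf b}}(t)/t$ for the Cauchy transform is then read off verbatim, and the positivity and invertibility on $(0,1)$ of the rational function $\alpha\mapsto \frac{\alpha}{1-\alpha}\frac{\prod_{s=1}^j(b_s-1+\alpha)}{\prod_{s=1}^i(a_s-1+\alpha)}$ is already part of the conclusion of Theorem~\ref{theo:hypergeo_polys_zeros}, not something that must be checked separately.

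The first two hypotheses I would dispatch by elementary sign bookkeeping. In scenario~(a) every $a_s<0$ and every $b_s\geq 1$, so no parameter lies in the forbidden interval $[0,1)$, giving~(i); moreover the negative parameters are exactly the $i$ values $a_1,\dots,a_i$, and $i$ is assumed even, giving~(ii). In scenario~(b) we have $a_s\geq b_s+1\geq 2$ together with $b_s\geq 1$, so again no parameter lies in $[0,1)$, and there are in fact no negative parameters at all, so their number is $0$, which is even; this settles both~(i) and~(ii).

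The substantive hypothesis is~(iii), real-rootedness with only nonpositive zeros, and this is exactly where the external input enters: I would invoke Theorems~4.6 and~4.7 of~\cite{martinez}, which guarantee, under the parameter constraints of scenarios~(a) and~(b) respectively, that the polynomials $\HyperP{{\bf a}}{{\bf b}}{n}$ have only real nonpositive roots. These theorems yield this property not merely for all sufficiently large $n$ but for every $n\geq 1$, which both verifies~(iii) and justifies the stronger phrasing in the corollary that the conclusion holds for all $n\geq 1$. I would also remark that, with leading coefficient $1$, real nonpositivity of the roots forces the coefficients to be nonnegative, so the hypothesis of a well-defined exponential profile underlying Theorem~\ref{theo:hypergeo_polys_zeros} is automatic.

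The main obstacle is thus entirely concentrated in this last step: establishing real-rootedness of hypergeometric polynomials is genuinely delicate, and I would not attempt to reprove it here but import it wholesale from~\cite{martinez}. Once~(i)--(iii) are in place, no further computation is required beyond matching the parameter hypotheses of the two scenarios to the two cited theorems, because the limiting measure and its Cauchy transform are supplied directly by Theorem~\ref{theo:hypergeo_polys_zeros}.
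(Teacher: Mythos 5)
Your proposal is correct and coincides with the paper's own (very brief) argument: the paper likewise obtains the corollary by importing real-rootedness with nonpositive zeros for all $n\geq 1$ from Theorems~4.6 and~4.7 of~\cite{martinez} and then applying Theorem~\ref{theo:hypergeo_polys_zeros}, with hypotheses (i) and (ii) being the same routine sign check you carry out. Your additional observations---that positivity/invertibility of the rational function is part of the theorem's conclusion, and that nonnegativity of the coefficients follows from monicity plus nonpositive roots---are accurate and consistent with the paper.
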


\begin{remark}\label{rem:sequences_ab}
Observe that, if $a_s,b_s\notin [0,1)$ are replaced by sequences $a_{s}^{(n)},b_{s}^{(n)}\notin [0,1)$ converging to $a_s,b_s$, respectively, then the proof of Theorem~\ref{theo:hypergeo_polys_zeros} remains the same and the resulting weak convergence continues to hold for $\lsem \HyperP{{\bf a^{(n)}}}{{\bf b^{(n)}}}{n}\rsem_n$. Indeed, the Riemann approximation in the proof of Theorem~\ref{theo:hypergeo_polys_zeros} persists, since assumption (i) implies uniform (Lipschitz) equicontinuity of the integrand. In this case, also the statement of Corollary~\ref{cor:real_zeros_hypergeo_polys} continues to hold if $b_s \ge 1$ is replaced by $b_s^{(n)}>1-1/n$.
Actually, we shall see below that this condition can further be weakened to $b_s^{(n)}\ge 1-1/n$, by setting $n^{\underline n}=0$ or formally allowing for $\Gamma(0)=\infty$.
We are going to use these facts in the special cases to follow.
\end{remark}

\begin{corollary}\label{cor:real_zeros_hypergeo_polys_2}
In the setting of Corollary~\ref{cor:real_zeros_hypergeo_polys}(a) with $i=0$, the following holds true. Assume that $b_s^{(n)}\ge 1-1/n$, $s=1,\ldots,j$ and $\lim_{n\to\infty}b_s^{(n)}=b_s\geq 1$, $s=1,\ldots,j$. Then the polynomials $\HyperP{\emptyset}{\mathbf b^{(n)}}{n}$ still have nonpositive roots and $\lsem \HyperP{\emptyset}{\mathbf b^{(n)}}{n}\rsem _n$ converges weakly to a probability measure with the Cauchy transform $G^{\varnothing,{\bf b}}$.
\end{corollary}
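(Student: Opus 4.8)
The plan is to reduce the statement to the already-established strict case from Remark~\ref{rem:sequences_ab} (valid for $b_s^{(n)}>1-1/n$): a continuity-of-roots argument handles the nonpositivity of the roots at each fixed $n$, and a direct profile computation handles the weak convergence. The only genuinely new feature is the possible degeneracy of the constant coefficient at the boundary value $b_s^{(n)}=1-1/n$, so I would first record the coefficient structure. Writing $\HyperP{\emptyset}{\mathbf b^{(n)}}{n}(x)=\sum_{k=0}^n a_{k:n}x^k$ with
$$
a_{k:n}=\binom{n}{k}\, n^{-j(n-k)}\prod_{s=1}^{j}\frac{\Gamma(b_s^{(n)} n + 1)}{\Gamma(b_s^{(n)} n - n + k + 1)},
$$
the hypothesis $b_s^{(n)}\ge 1-1/n$ forces the argument $b_s^{(n)} n-n+k+1\ge k$ of each $\Gamma$ in the denominator to be $\ge 1$ for all $k\ge 1$. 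Hence every coefficient is nonnegative, the leading coefficient equals $1$, and the only coefficient that can vanish is $a_{0:n}$, which is $0$ precisely when some $b_s^{(n)}=1-1/n$ (so that a factor $\Gamma(0)^{-1}=0$ appears). Thus the boundary case merely introduces a single simple root at the origin.

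For the nonpositivity of the roots at a fixed $n$, I would perturb. Let $P_{n,\delta}$ denote the polynomial obtained by replacing every $b_s^{(n)}$ with $b_s^{(n)}+\delta$. For $\delta>0$ we have $b_s^{(n)}+\delta>1-1/n$, so by the strict version in Remark~\ref{rem:sequences_ab} the polynomial $P_{n,\delta}$ has only real nonpositive roots. As $\delta\downarrow 0$ its coefficients converge to those of $\HyperP{\emptyset}{\mathbf b^{(n)}}{n}$ by continuity of $\Gamma$ (the constant term tending to its possibly vanishing limit), while the degree and leading coefficient stay fixed at $n$ and $1$. Continuous dependence of the roots of a monic polynomial on its coefficients then shows that every root of $\HyperP{\emptyset}{\mathbf b^{(n)}}{n}$ is a limit of nonpositive reals, hence real and nonpositive.

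For the weak convergence I would verify that the sequence possesses the same exponential profile $g_H$ as in Theorem~\ref{theo:hypergeo_polys_zeros} and invoke Theorem~\ref{theo:exp_profile_implies_zeros}. Fixing $\alpha\in(0,1)$, we have $\lfloor\alpha n\rfloor\ge 1$ for large $n$, so the degenerate $k=0$ term is irrelevant, and the Stirling/Riemann-sum computation from the proof of Theorem~\ref{theo:hypergeo_polys_zeros} applies with $b_s$ replaced by $b_s^{(n)}\to b_s$; the integrand $x\mapsto\log(b_s^{(n)}-x)$ remains uniformly bounded and equicontinuous on $[0,1-\alpha]$ for $\alpha$ in compact subsets of $(0,1)$ because $b_s^{(n)}-x\ge\alpha-1/n$ is eventually bounded away from $0$. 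Hence $\tfrac1n\log a_{\lfloor\alpha n\rfloor:n}\to g_H(\alpha)$, with $g_H$ finite on $(0,1)$ so that $\underline m=0$ and $\overline m=1$; the possible single root at $0$ carries mass $1/n\to0$, consistent with $\mu(\{0\})=\underline m=0$. Theorem~\ref{theo:exp_profile_implies_zeros} then yields weak convergence of $\lsem\HyperP{\emptyset}{\mathbf b^{(n)}}{n}\rsem_n$ to the measure whose Cauchy transform is $G^{\varnothing,\mathbf b}$.

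The main obstacle is the bookkeeping of the boundary degeneracy. One must ensure that the vanishing of $a_{0:n}$ spoils neither the real-rootedness (handled by the stability of the leading coefficient under the perturbation, so that no root escapes to infinity in the limit $\delta\downarrow0$) nor the profile (handled by the fact that the profile only probes $\alpha$ bounded away from $0$), and that the Riemann approximation survives even though $b_s^{(n)}$ may temporarily lie in $[1-1/n,1)$, which it does because $b_s^{(n)}\to b_s\ge1$ makes $b_s^{(n)}-x$ uniformly positive on the relevant range for large $n$.
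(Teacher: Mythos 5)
Your proof is correct, but it takes a genuinely different route from the paper's. The paper treats the boundary case by an exact algebraic identity: if $b_{s_0}^{(n)}=1-1/n$, it factors the (vanishing) constant term out as a root at the origin and rewrites
$$
\HyperP{\emptyset}{\mathbf b^{(n)}}{n}(x)=x\left(\tfrac{n-1}{n}\right)^{(n-1)j}\HyperP{\emptyset}{\tilde{\mathbf b}^{(n)}}{n-1}\left(\left(\tfrac{n}{n-1}\right)^{j}x\right),
\qquad \tilde b_s^{(n)}=b_s^{(n)}\tfrac{n}{n-1},
$$
so that all new parameters satisfy the strict bound $\tilde b_s^{(n)}>1-\frac{1}{n-1}$ (indeed $\tilde b_s^{(n)}\ge 1$, with $\tilde b_{s_0}^{(n)}=1$) and the already-established case of Corollary~\ref{cor:real_zeros_hypergeo_polys} and Remark~\ref{rem:sequences_ab} applies in degree $n-1$; this yields nonpositivity of the roots and the weak limit in one stroke, since the extra root at $0$ and the rescaling $(n/(n-1))^j\to 1$ do not affect the limit. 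You instead prove the two assertions separately: nonpositivity at fixed $n$ by a $\delta$-perturbation plus continuity of the roots of monic polynomials (real-rootedness is closed under coefficientwise limits at fixed degree and leading coefficient), and the weak limit by re-running the profile computation of Theorem~\ref{theo:hypergeo_polys_zeros} with the uniform lower bound $b_s^{(n)}-m/n\ge k/n\ge\alpha-1/n$ and then invoking Theorem~\ref{theo:exp_profile_implies_zeros}. Both arguments are sound, and your identification of the degeneracy ($a_{0:n}=0$ precisely when some $b_s^{(n)}=1-1/n$, while $a_{1:n}>0$) is exactly the simple root at the origin that the paper factors out. What each approach buys: the paper's reduction is shorter and shows the boundary case is literally a strict case in disguise, but it depends on discovering the factorization identity; your soft argument is more robust and would extend to boundary degeneracies admitting no clean reduction, at the cost of two separate verifications. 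One small point worth making explicit in your perturbation step: to get real-rootedness of $P_{n,\delta}$ at a fixed $n$ from Remark~\ref{rem:sequences_ab}, apply it to the shifted sequence $m\mapsto b_s^{(m)}+\delta$, which automatically satisfies $b_s^{(m)}+\delta>1-1/m$ for every $m$ (since $b_s^{(m)}\ge 1-1/m$) and converges to $b_s+\delta\ge 1$; this legitimizes the per-$n$ invocation of the strict case, which is the same underlying input (the real-rootedness results of Mart\'inez-Finkelshtein, Morales and Perales) that the paper's own proof uses.
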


\begin{proof}
Let $\mathbf b^{(n)}\in[1-1/n,\infty)^j$ and set $b^{(n)}_0=1$. Rewrite the hypergeometric polynomial
\begin{align*}
\HyperP{\emptyset}{\mathbf b^{(n)}}{n}(x)= \sum_{k=0}^n\binom n k n^{-j(n-k)} {(\mathbf b^{(n)} n)^{\underline{n-k}}} 	x^{k}=\sum_{k=0}^n \frac {n^k} {k!} \prod_{s=0}^j \frac{\Gamma(b_s^{(n)}n+1)}{n^k\Gamma(b_s^{(n)}n-k+1)}	x^{n-k}.
\end{align*}
Assume now that $b_{s_0}^{(n)}=1-1/n$ for some $s_0$, then
\begin{align*}
\HyperP{\emptyset}{\mathbf b^{(n)}}{n}(x)&=x\left(\frac{n-1}{n}\right)^{(n-1)j}\sum_{k=0}^{n-1} \frac {(n-1)^k} {k!} \prod_{s=0}^j \frac{\Gamma(b_s^{(n)}n+1)}{(n-1)^k\Gamma(b_s^{(n)}n-k+1)} \left(\Big(\frac{n}{n-1}\Big)^{j}x\right)^{n-1-k}\\
&=x\left(\frac{n-1}{n}\right)^{(n-1)j} \HyperP{\emptyset}{\tilde{\mathbf{b}}^{(n)}}{n-1} \left(\Big(\frac{n}{n-1}\Big)^{j}x\right)
\end{align*}
where $\tilde b_s^{(n)}=b_s^{(n)}(1+\frac 1 {n-1})$ and the new vector $\tilde{\mathbf{b}}^{(n)}$ consists of $j-1$ components $\tilde b_s^{(n)}$ for $s=1,\dots,j$, $s\neq s_0$. One may easily check that $\tilde b_s^{(n)}>1-\frac 1 {n-1}$ and $\tilde b_{s_0}^{(n)}=1$, hence $\HyperP{\emptyset}{\tilde{\mathbf{b}}^{(n)}}{n-1} $ has nonpositive roots with limiting distribution as given in Corollary \ref{cor:real_zeros_hypergeo_polys}. Thus, the roots of $\HyperP{\emptyset}{\mathbf b^{(n)}}{n}$ are nonpositive as well and have the same limit distribution as that of $\HyperP{\emptyset}{\tilde{\mathbf{b}}^{(n)}}{n-1} $.
\end{proof}

\subsection{Laguerre polynomials}
Consider the associated Laguerre polynomials with parameter $\gamma\in\R$:
$$
L_n^{(\gamma)} (x) = \sum_{k=0}^n (-1)^k \binom{n+\gamma}{n-k} \frac{x^k}{k!}.
$$
The Laguerre polynomial $L_n^{(n\gamma_*)}(-nx)=\frac{n^n}{n!}\HyperP{\varnothing}{1+\gamma_*}{n}(x)$ is a rescaled hypergeometric function with $i=0$, $j=1$ and ${\bf b}=1+\gamma_*$. Recall that the Marchenko--Pastur law with parameters $\lambda>0$ and $\sigma^2>0$ is given by
\begin{equation}\label{eq:MP_def}
\mathsf{MP}_{\sigma^2,\lambda}({\rm d}x)=\lambda^*\delta_0({\rm d} x)+\frac 1 {2\pi\sigma^2}\frac{\sqrt{(\lambda_+-x)(x-\lambda_-)}}{\lambda x}\ind_{(\lambda_{-},\lambda_{+})}(x)\dd x,
\end{equation}
where $\lambda_{\pm}:=\sigma^2(1\pm\sqrt \lambda)^2$ and the mass at $0$ is $\lambda^{*}:=\max(1-\lambda^{-1},0)$. It is known that the empirical root distribution of rescaled Laguerre polynomials converge to a Marchenko--Pastur law $\mathsf{MP}_{\sigma^2,\lambda}$ with appropriately chosen parameters $\sigma^2$ and $\lambda$, see~\cite[Theorem 3.1]{dette_studden} or~\cite[Theorem 5]{gawronski_strong_laguerre_hermite}. According to~\cite[Lemma 3.11]{BaiSilverstein} the Cauchy transform is equal to
\begin{equation}\label{eq:marchenko_pastur_Stieltjes}
G_{\mathsf{MP}_{\sigma^2,\lambda}}(t)=\frac {t-\sigma^2(1-\lambda)-\sqrt{(\lambda_{+}-t)(\lambda_{-}-t)}}{2\sigma^2\lambda t},\quad t\in  \C \bsl [\lambda_-,\lambda_+],
\end{equation}
where the branch is chosen such that $t\mapsto \sqrt{(\lambda_{+}-t)(\lambda_{-}-t)}\sim t$, as $|t|\to+\infty$.

\begin{corollary}\label{cor:laguerre_polys_zeros}
Let $(\gamma_n)_{n\in \N}$ with $\gamma_n\in [0,\infty)$ or $\gamma_n\in\{-1,\dots,-n+1\}$ be a sequence such that $ \gamma_*:=\lim_{n\to\infty} \frac 1n \gamma_n  >-1$. Then $\lsem L_n^{(\gamma_n)}(nx)\rsem_n$ converges weakly to a Marchenko--Pastur law with $\sigma^2=\gamma_*+1$, $\lambda=\frac{1}{\gamma_*+1}$ and $\lambda_{\pm}=\gamma_*+2\pm 2\sqrt{\gamma_*+1}$.
\end{corollary}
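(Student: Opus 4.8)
The plan is to reduce both parameter regimes to the hypergeometric framework and then to locate the answer inside the Marchenko--Pastur family. The starting point is the identity recorded above, which in sequence form reads $L_n^{(\gamma_n)}(nx)=\frac{n^n}{n!}\HyperP{\varnothing}{1+\gamma_n/n}{n}(-x)$; hence $\lsem L_n^{(\gamma_n)}(nx)\rsem_n$ is the reflection about the origin of $\lsem \HyperP{\varnothing}{1+\gamma_n/n}{n}\rsem_n$. Writing $b^{(n)}:=1+\gamma_n/n\to 1+\gamma_*=:b$ and taking $i=0$, $j=1$, I would first dispose of the case $\gamma_n\in[0,\infty)$, in which $\gamma_*\ge 0$ and $b^{(n)}\ge 1>1-1/n$, so that Remark~\ref{rem:sequences_ab} together with Corollary~\ref{cor:real_zeros_hypergeo_polys}(a) (note $i=0$ is even) guarantees nonpositive roots and weak convergence of $\lsem \HyperP{\varnothing}{b^{(n)}}{n}\rsem_n$ to the law whose Cauchy transform is furnished by Theorem~\ref{theo:hypergeo_polys_zeros}.

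With $i=0$ and a single $b=1+\gamma_*$, the profile computed in the proof of Theorem~\ref{theo:hypergeo_polys_zeros} specialises to $\eee^{-g_H'(\alpha)}=\frac{\alpha}{1-\alpha}(b-1+\alpha)=\frac{\alpha(\alpha+\gamma_*)}{1-\alpha}$ on $(0,1)$, matching the table. By Theorem~\ref{theo:exp_profile_implies_zeros} the map $t\mapsto tG_H(t)$ inverts this, so $s:=tG_H(t)$ solves $t(1-s)=s(s+\gamma_*)$, i.e.\ $s^2+(t+\gamma_*)s-t=0$. Reflecting the measure via $G(t)=-G_H(-t)$ (substituting $t\mapsto-t$ and using $tG(t)=(-t)G_H(-t)$) turns this into the statement that $w:=tG(t)$ satisfies $w^2-(t-\gamma_*)w+t=0$; fixing the branch by $G(t)\sim t^{-1}$ at infinity gives
\[
G(t)=\frac{(t-\gamma_*)-\sqrt{(t-\gamma_*)^2-4t}}{2t}.
\]
It then remains to match this against $G_{\mathsf{MP}_{\sigma^2,\lambda}}$ from~\eqref{eq:marchenko_pastur_Stieltjes}. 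Substituting $\sigma^2=\gamma_*+1$ and $\lambda=(\gamma_*+1)^{-1}$ yields $\sigma^2\lambda=1$, $\sigma^2(1-\lambda)=\gamma_*$, $\lambda_\pm=\gamma_*+2\pm 2\sqrt{\gamma_*+1}$ and $\lambda_+\lambda_-=\gamma_*^2$, so that $(\lambda_+-t)(\lambda_--t)=(t-\gamma_*)^2-4t$ and the two Cauchy transforms coincide; since $\lambda\le 1$ there is no atom, consistently with $L_n^{(\gamma_n)}$ having strictly positive roots when $\gamma_n\ge 0$.

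The main obstacle is the complementary regime $\gamma_n\in\{-1,\dots,-n+1\}$, forcing $\gamma_*\in(-1,0]$. Here $b^{(n)}=1+\gamma_n/n$ may drop below $1-1/n$ and even into $[0,1)$, so hypothesis~(i) of Theorem~\ref{theo:hypergeo_polys_zeros} fails and a single application of Corollaries~\ref{cor:real_zeros_hypergeo_polys} and~\ref{cor:real_zeros_hypergeo_polys_2} no longer suffices; this is precisely the regime in which $\mathsf{MP}_{\gamma_*+1,(\gamma_*+1)^{-1}}$ acquires the atom $\lambda^\ast=\max(-\gamma_*,0)=-\gamma_*$ at $0$. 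My plan is to invoke the classical identity $L_n^{(-m)}(x)=(-1)^m\frac{(n-m)!}{n!}x^m L_{n-m}^{(m)}(x)$ with $m:=-\gamma_n\in\{1,\dots,n-1\}$, which splits off a zero of multiplicity $m$ at the origin---contributing an atom of mass $m/n\to-\gamma_*$---and leaves the factor $L_{n-m}^{(m)}$ with a \emph{nonnegative} parameter ratio $m/(n-m)\to -\gamma_*/(1+\gamma_*)=:\gamma_*'\ge 0$, to which the first case applies. The bulk of the technical care lies in the bookkeeping of the two scalings: the argument $nx$ differs from the natural scale $(n-m)x$ of $L_{n-m}^{(m)}$ by the factor $n/(n-m)\to(1+\gamma_*)^{-1}$, while the surviving mass carries weight $(n-m)/n\to 1+\gamma_*$. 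Scaling the limiting law $\mathsf{MP}_{\gamma_*'+1,(\gamma_*'+1)^{-1}}=\mathsf{MP}_{1/(1+\gamma_*),\,1+\gamma_*}$ of $L_{n-m}^{(m)}$ by $1+\gamma_*$ produces $\mathsf{MP}_{1,\,1+\gamma_*}$, and a short check shows that $(-\gamma_*)\delta_0+(1+\gamma_*)\mathsf{MP}_{1,1+\gamma_*}$ has exactly the support $[\lambda_-,\lambda_+]$, the atom $-\gamma_*$, and the absolutely continuous density of $\mathsf{MP}_{\gamma_*+1,(\gamma_*+1)^{-1}}$, completing the identification. Real-rootedness throughout is inherited from the orthogonality of $L_{n-m}^{(m)}$.
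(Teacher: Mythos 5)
Your proposal is correct and follows essentially the same route as the paper's proof: for $\gamma_n\ge 0$ one inverts the hypergeometric profile $\eee^{-g'(\alpha)}=\frac{\alpha(\alpha+\gamma_*)}{1-\alpha}$ and matches the resulting Cauchy transform with~\eqref{eq:marchenko_pastur_Stieltjes}, and for negative integer parameters the paper uses exactly your reflection identity (in the form $L_n^{(\gamma_n)}(x)=(-x)^{-\gamma_n}\frac{(n+\gamma_n)!}{n!}L_{n+\gamma_n}^{(-\gamma_n)}(x)$) to split off the atom of mass $-\gamma_*$ at the origin, rescale by $\frac{n}{n+\gamma_n}\to(1+\gamma_*)^{-1}$, and reduce to the first case. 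The only differences are cosmetic: you perform the reflection before matching transforms and verify the identifications $\sigma^2\lambda=1$, $\lambda_+\lambda_-=\gamma_*^2$ and the atom/density bookkeeping slightly more explicitly than the paper does.
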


\begin{proof}
Let us first consider the case $\gamma_n\ge 0$.
By Theorem~\ref{theo:hypergeo_polys_zeros}, the exponential profile $g$ satisfies $\eee^{-g'(\alpha)}=\frac{\alpha(\alpha+\gamma_*)}{1-\alpha}$  and $\lsem L_n^{(\gamma_n)}(-nx)\rsem_n$ converges weakly to a distribution with the Cauchy transform
\begin{align*}
G(t):=G^{\varnothing,1+\gamma^{\ast}}(t)=\frac {-\gamma_*-t+\sqrt{\gamma_*^2+2(\gamma_*+2)t+t^2}}{2t},
\end{align*}
where the branch of $t\mapsto \sqrt{\gamma_*^2+2(\gamma_*+2)t+t^2}$ is chosen such that it is asymptotically equivalent to $t$, as $|t|\to+\infty$. This choice ensures that $G(t)\sim 1/t$, as $|t|\to+\infty$. Comparing the above formula to~\eqref{eq:marchenko_pastur_Stieltjes} reveals that $G$ is the Stieltjes transform of $\mathsf{MP}_{1+\gamma_{\ast},(1+\gamma_{\ast})^{-1}}(-\cdot)$.

Assume now that $-\gamma_{n}\in\{1,2,\ldots,n-1\}$ and $\gamma_n/n\to \gamma^{\ast}\in (-1,0]$. Using the relation
	$$
	L_n^{(\gamma_n)}(x)=(-x)^{-\gamma_n}\frac{(n+\gamma)!}{n!}L_{n+\gamma_n}^{(-\gamma_n)}(x)
	$$
	we conclude that the roots of $L_n^{(\gamma_n)}(x)$ are nonnegative and
	$$
	\lsem L_n^{(\gamma_n)}(nx)\rsem_n=\lsem x^{-\gamma_n}L_{n+\gamma_n}^{(-\gamma_n)}(nx)\rsem_n=\frac{-\gamma_n}{n}\delta_0+\frac{n+\gamma_n}{n}\left\lsem L_{n+\gamma_n}^{(-\gamma_n)}\left(\frac{n}{n+\gamma_n}(n+\gamma_n)x\right)\right\rsem_{n+\gamma_n}.
	$$
	According to the case $\gamma_n\ge 0$, we have
	$$
	\lsem L_n^{(\gamma_n)}(nx)\rsem_n\toweak -\gamma^{\ast}\delta_0+(1+\gamma^{\ast})\mathsf{MP}_{(1+\gamma^{\ast})^{-1},1+\gamma^{\ast}}\left((1+\gamma^{\ast})^{-1}(\cdot)\right)=\mathsf{MP}_{1+\gamma_{\ast},(1+\gamma_{\ast})^{-1}}(\cdot),
	$$
	where the last step follows from a straightforward calculation of the rescaled density \eqref{eq:MP_def}.
\end{proof}

For completeness, let us derive the logarithmic potential of ${\mathsf{MP}_{\sigma^2,\lambda}}$. It is easy to see, that $G_{\mathsf{MP}_{\sigma^2,\lambda}}$ is a solution to the quadratic equation
$$
(\sigma^2\lambda t)x^2-(t-\sigma^{2}(1-\lambda))x+1=0
$$
and thereupon is an inverse of the function
\begin{equation}\label{eq:inverse_G_MP}
x\mapsto -(1+\sigma^2(1-\lambda)x)/(x(\sigma^2\lambda x-1))=\frac{1}{x}-\frac{\sigma^2}{\sigma^2\lambda x-1}.
\end{equation}
on an appropriate interval of monotonicity (whose exact form is of no importance for our purposes). Applying~\eqref{eq:antiderivative_inverse} with $f$ given by~\eqref{eq:inverse_G_MP} we conclude that
$$
\int G_{\mathsf{MP}_{\sigma^2,\lambda}}(t){\rm d}t=tG_{\mathsf{MP}_{\sigma^2,\lambda}}(t)-\log |G_{\mathsf{MP}_{\sigma^2,\lambda}}(t)|+\lambda^{-1}\log |1-\sigma^2\lambda G_{\mathsf{MP}_{\sigma^2,\lambda}}(t)|+C_{\lambda,\sigma^2}.
$$
Thus, for large enough positive real $t$,
\begin{equation}\label{eq:log_potential-MP}
\int_{\R}\log (t-x)\mathsf{MP}_{\sigma^2,\lambda}({\rm d}x)=tG_{\mathsf{MP}_{\sigma^2,\lambda}}(t)-\log |G_{\mathsf{MP}_{\sigma^2,\lambda}}(t)|+\lambda^{-1}\log |1-\sigma^2\lambda G_{\mathsf{MP}_{\sigma^2,\lambda}}(t)|-1,
\end{equation}
where the constant $C_{\lambda,\sigma^2}=-1$ was determined using $tG_{\mathsf{MP}_{\sigma^2,\lambda}}(t)\to 1$ and the fact that the right-hand side of the last centered formula must satisfy $\log t+o(1)$, as $t\to+\infty$.

\begin{figure}[t]
	\centering
	\includegraphics[width=0.23\linewidth]{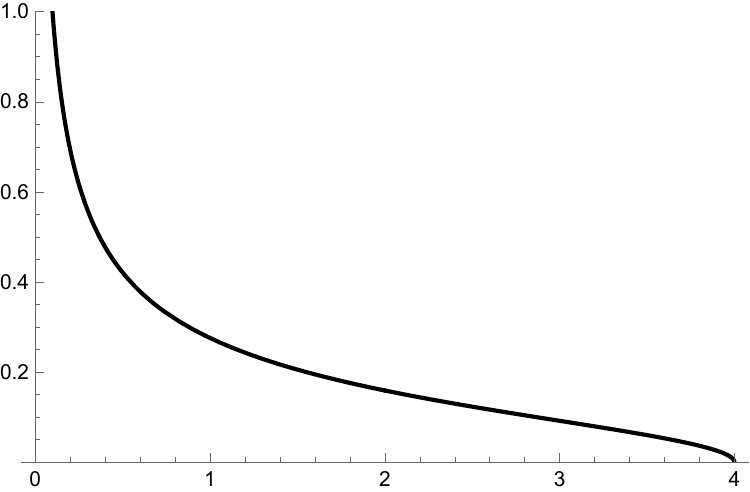}\quad \includegraphics[width=0.23\linewidth]{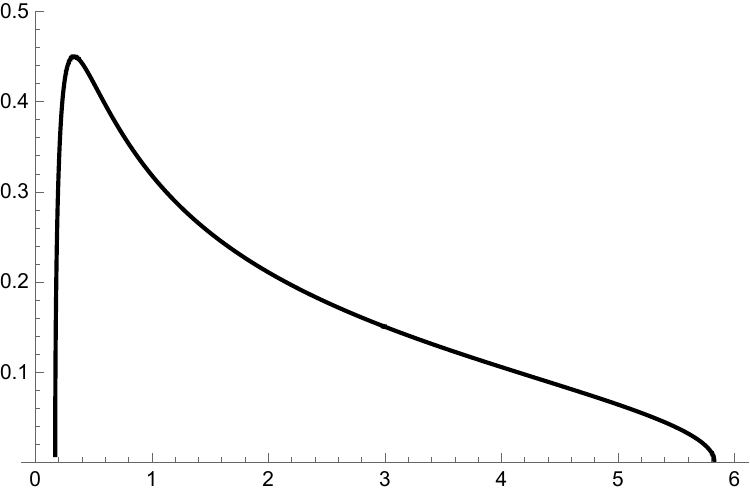}\quad \includegraphics[width=0.23\linewidth]{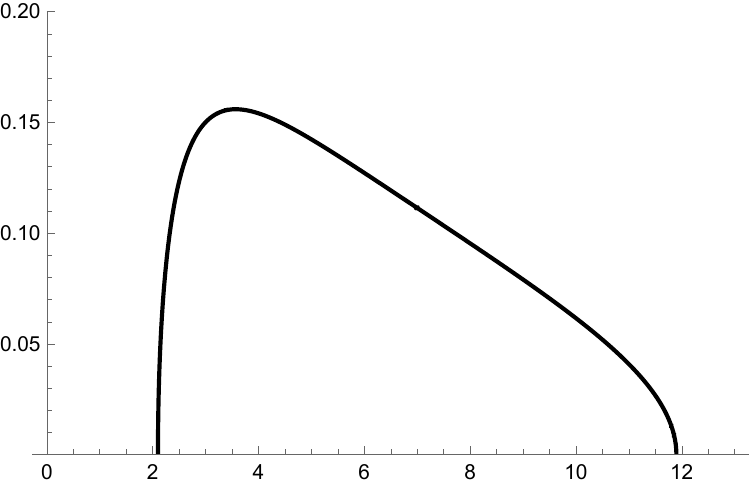}\quad \includegraphics[width=0.23\linewidth]{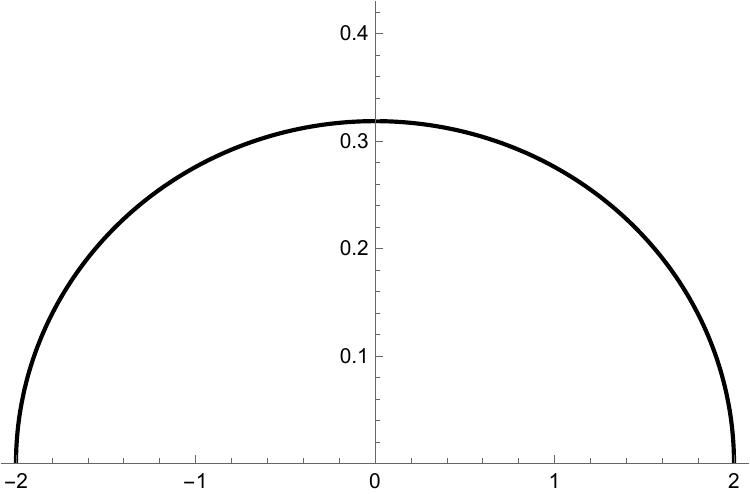}
	\caption{The limiting probability densities of zeros of Laguerre polynomials for $\gamma^{\ast}\in\{0,1,5\}$ (from left to right) and Hermite polynomials (right).}
	\label{fig:Laguerre}
\end{figure}

\subsection{Hermite polynomials}

The classical (probabilist) Hermite polynomials are given by
\begin{align}\label{eq:Hermite}
\mathrm{He}_n(x)
=
\eee^{-\frac 1 2  D^2} x^n
=
n! \sum_{m=0}^{\lfloor n/2\rfloor}  \frac{(-1)^m }{m!  2^m} \cdot \frac{x^{n-2m}}{(n-2m)!},
\end{align}
where we used the notation $D=\frac{\dd}{\dd x}$. It is known that Hermite polynomials have only real roots and these roots are symmetric around the origin. In particular, they are not all nonpositive and Theorem~\ref{theo:exp_profile_implies_zeros} is not directly applicable. Nevertheless, using the following relation between Hermite and Laguerre polynomials
\begin{align}\label{eq:Hermite-Laguerre}
\mathrm{He}_n(x)=(-2)^{\lfloor \tfrac n 2\rfloor}{\lfloor \tfrac n 2\rfloor}!x^{\epsilon} L_{\lfloor \tfrac n 2\rfloor}^{(\epsilon-1/2)}(x^2/2),
\end{align}
where $\epsilon=1$ if $n$ is odd and $\epsilon=0$ if $n$ is even, we can recover the well known semicircle distribution as the limiting root distribution of Hermite polynomials. Indeed, formula~\eqref{eq:Hermite-Laguerre} and Corollary~\ref{cor:laguerre_polys_zeros} applied with $\gamma_n=\epsilon-1/2$ and $\gamma^{\ast}=0$, imply
\begin{align*}
\lsem \mathrm{He}_n( \sqrt n x)\rsem_n
&=\lsem L_{\lfloor \tfrac n 2\rfloor}^{(\epsilon-1/2)}(n x^2/2)\rsem_n+\epsilon\tfrac{1}{n}\delta_0\\
&=n^{-1} \lfloor \tfrac n 2\rfloor\lsem L_{\lfloor \tfrac n 2\rfloor}^{(\epsilon-1/2)}(n x^2/2)\rsem_{\lfloor \tfrac n 2\rfloor}+\epsilon\tfrac{1}{n}\delta_0
\toweak \frac{1}{2}\big(\mathsf{MP}_{1,1}(-(\cdot)^2)+\mathsf{MP}_{1,1}((\cdot)^2)\big)=\mathsf{sc}_1.
\end{align*}
Here, $\mathsf{sc}_1$ is the standard semi-circle distribution with the density~\eqref{eq:theo:hermite_polys_density} and the last line follows from a direct calculation using that $\mathsf{MP}_{1,1}((\cdot)^2)$ is the quarter-circular law.
Thus, we obtain the following
\begin{corollary}\label{cor:hermite_polys_zeros}
The weak limit of $\lsem \mathrm{He}_n( \sqrt n x)\rsem_n$ is the standard semicircle distribution $\mathsf{sc}_1$ with
\begin{align}
&\text{Cauchy transform:}&   &G_{\mathsf{sc}_1}(t)=\frac 1 2 \big( t-\sqrt{t^2-4}),  &\quad& t\in  \C \bsl [-2,2], \label{eq:theo:hermite_polys_stieltjes}\\
&\text{Lebesgue density:}&      &p_{\mathsf{sc}_1}(x)=\frac 1 {2\pi}\sqrt{4-x^2}
, &\quad& x\in [-2,2],
\label{eq:theo:hermite_polys_density}\\
&\text{log-potential:}&        &U_{\mathsf{sc}_1}(t)=\frac 14 \Re\left(t^2 - t \sqrt{t^2 - 4}\right)  + \log\left|\frac{t + \sqrt{t^2 - 4}}{2}\right| -\frac 12,
 &\quad& t\in  \C .
\label{eq:theo:hermite_polys_log_pot}
\end{align}
\end{corollary}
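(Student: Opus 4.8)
The weak convergence $\lsem \mathrm{He}_n(\sqrt n x)\rsem_n\toweak\mathsf{sc}_1$ has already been obtained in the discussion preceding the statement, by combining the Hermite--Laguerre identity~\eqref{eq:Hermite-Laguerre} with Corollary~\ref{cor:laguerre_polys_zeros}. Hence the plan is only to verify the three explicit formulas~\eqref{eq:theo:hermite_polys_stieltjes}--\eqref{eq:theo:hermite_polys_log_pot}. For the Cauchy transform I would reuse the decomposition $\mathsf{sc}_1=\tfrac12\big(\mathsf{MP}_{1,1}(-(\cdot)^2)+\mathsf{MP}_{1,1}((\cdot)^2)\big)$ appearing above: if $Y\sim\mathsf{MP}_{1,1}$ and $X$ is a symmetric variable with $|X|\eqdistr\sqrt Y$, then $X\sim\mathsf{sc}_1$ and a one-line computation gives $G_{\mathsf{sc}_1}(t)=\E\frac{1}{t-X}=\E\frac{t}{t^2-Y}=t\,G_{\mathsf{MP}_{1,1}}(t^2)$. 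Substituting~\eqref{eq:marchenko_pastur_Stieltjes} with $\sigma^2=\lambda=1$ (so that $\lambda_-=0$, $\lambda_+=4$) and simplifying $\sqrt{t^4-4t^2}=t\sqrt{t^2-4}$ yields~\eqref{eq:theo:hermite_polys_stieltjes}. Equivalently, one may verify directly that $G(t)=\tfrac12(t-\sqrt{t^2-4})$ solves the defining quadratic $G^2-tG+1=0$ of the semicircle law, with the branch fixed by $G(t)\sim 1/t$ as $|t|\to\infty$.

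For the density I would feed~\eqref{eq:theo:hermite_polys_stieltjes} into the Stieltjes--Perron inversion formula~\eqref{eq:Perron_inversion}. For $x\in(-2,2)$ the relevant branch gives $\sqrt{(x+\ii0)^2-4}=\ii\sqrt{4-x^2}$, whence $\Im G_{\mathsf{sc}_1}(x+\ii0)=-\tfrac12\sqrt{4-x^2}$ and $p_{\mathsf{sc}_1}(x)=-\tfrac1\pi\Im G_{\mathsf{sc}_1}(x+\ii0)=\tfrac{1}{2\pi}\sqrt{4-x^2}$, which is~\eqref{eq:theo:hermite_polys_density}; for $|x|>2$ the transform is real, confirming that the support is exactly $[-2,2]$.

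For the logarithmic potential I would use $U_{\mathsf{sc}_1}'=G_{\mathsf{sc}_1}$ on $(2,\infty)$ and antidifferentiate. With the standard primitive $\int\sqrt{t^2-4}\,\dd t=\tfrac t2\sqrt{t^2-4}-2\log|t+\sqrt{t^2-4}|$, integrating $\tfrac12(t-\sqrt{t^2-4})$ produces $\tfrac14\big(t^2-t\sqrt{t^2-4}\big)+\log\big|\tfrac{t+\sqrt{t^2-4}}2\big|+C$, matching~\eqref{eq:theo:hermite_polys_log_pot} up to the constant. The constant is then pinned down by the normalization $U_{\mathsf{sc}_1}(t)=\log t+o(1)$, $t\to+\infty$: expanding $\sqrt{t^2-4}=t-2/t+O(t^{-3})$ gives $\tfrac14(t^2-t\sqrt{t^2-4})\to\tfrac12$ and $\log\big|\tfrac{t+\sqrt{t^2-4}}2\big|=\log t+o(1)$, which forces the additive constant $-\tfrac12$. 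Passing to the real part and continuing analytically off $[-2,2]$ then gives the formula for all $t\in\C$.

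The only genuinely delicate point is the bookkeeping of the branch of $\sqrt{t^2-4}$: it must be positive for $t>2$, equal to $\ii\sqrt{4-x^2}$ on the upper edge of the cut, and asymptotic to $t$ at infinity, so that simultaneously $G_{\mathsf{sc}_1}\sim1/t$ and the logarithm in~\eqref{eq:theo:hermite_polys_log_pot} is single-valued on $\C\setminus[-2,2]$. Once this branch is fixed consistently, all three computations are routine.
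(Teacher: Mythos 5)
Your proposal is correct, and its skeleton matches the paper's: the weak convergence is quoted from the preceding discussion, and the additive constant in the log-potential is pinned down by the normalization $U_{\mathsf{sc}_1}(t)=\log t+o(1)$ as $t\to+\infty$, exactly as in the paper. The one genuine difference is in how the antiderivative of $G_{\mathsf{sc}_1}$ is obtained: the paper observes that $G_{\mathsf{sc}_1}$ solves $x^2-tx+1=0$, hence is the inverse of $x\mapsto x+x^{-1}$ on $(-\infty,-1)$, and invokes the inverse-function antiderivative formula~\eqref{eq:antiderivative_inverse} (the device used throughout the paper, e.g.\ for Touchard polynomials and the Marchenko--Pastur potential), whereas you antidifferentiate directly via the classical primitive $\int\sqrt{t^2-4}\,\dd t=\tfrac t2\sqrt{t^2-4}-2\log\bigl|t+\sqrt{t^2-4}\bigr|$; both routes land on the same expression, your computation of the limit $\tfrac14(t^2-t\sqrt{t^2-4})\to\tfrac12$ is right, and the resulting constant $-\tfrac12$ agrees with the paper's bookkeeping ($+\tfrac12+C$ with $C=-1$). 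You also supply derivations of~\eqref{eq:theo:hermite_polys_stieltjes} and~\eqref{eq:theo:hermite_polys_density}, via the identity $G_{\mathsf{sc}_1}(t)=t\,G_{\mathsf{MP}_{1,1}}(t^2)$ (which is a clean restatement of the symmetrization $\tfrac12\bigl(\mathsf{MP}_{1,1}(-(\cdot)^2)+\mathsf{MP}_{1,1}((\cdot)^2)\bigr)$ used before the corollary) and Stieltjes--Perron inversion; the paper simply treats these as standard facts about the semicircle law, so your extra work is harmless and correct. Your closing remark on fixing the branch of $\sqrt{t^2-4}$ consistently is exactly the right point of care, and one can add that the formula extends to the cut itself because $\Re\bigl(x\cdot\ii\sqrt{4-x^2}\bigr)=0$ and $\bigl|x+\ii\sqrt{4-x^2}\bigr|=2$ for $x\in[-2,2]$, so the right-hand side of~\eqref{eq:theo:hermite_polys_log_pot} is continuous on all of $\C$, matching the continuity of $U_{\mathsf{sc}_1}$ for a bounded density.
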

\begin{proof}
The convergence to $\mathsf{sc}_1$ has already been proved. Formula~\eqref{eq:theo:hermite_polys_log_pot} follows  from~\eqref{eq:antiderivative_inverse} by noting that $G_{\mathsf{sc}_1}$ is a solution to the equation $x^2-tx+1=0$, hence is an inverse of $(-\infty,-1)\ni x\mapsto x+x^{-1}\in (-\infty,-2)$. Thus,
\begin{multline*}
\int \log(t-x)\mathsf{sc}_1({\rm d}x)=\int G_{\mathsf{sc}_1}(t){\rm d}t\overset{\eqref{eq:antiderivative_inverse}}{=}tG_{\mathsf{sc}_1}(t)-\frac{1}{2}G_{\mathsf{sc}_1}^2(t)-\log |G_{\mathsf{sc}_1}(t)|+C\\
=\frac{1}{2}tG_{\mathsf{sc}_1}(t)+\frac{1}{2}-\log |G_{\mathsf{sc}_1}(t)|+C=\frac{1}{4}(t^2-t\sqrt{t^2-4})+\frac{1}{2}+\log \left|\frac{t+\sqrt{t^2-4}}{2}\right|+C.
\end{multline*}
The constant $C=-1$ is determined from $U_{\mathsf{sc}_1}(t)=\log t + o(1)$, as $t\to+\infty$.
\end{proof}

\begin{remark}
As we have already mentioned, Theorem~\ref{theo:exp_profile_implies_zeros} is not directly applicable because
$\mathrm{He}_n$ have also positive roots. However, the semicircle limiting distribution for $\mathrm{He}_n$ could have been derived directly (without a resort to Laguerre polynomials) by using a trick discussed in Remark~\ref{rem:positive_zeros} in conjunction with the classic upper bound for zeros of $\mathrm{He}_n$, see Theorem 6.32 in~\cite{Szego}.
\end{remark}

\subsection{Jacobi polynomials}
For $u,v>-1$ the Jacobi polynomials $J_n^{(u,v)}$ are defined by
$$
J_n^{(u,v)}(x)=\frac{(u+1)^{\overline{n}}}{n!} \ _{2}F_{1} \left(\begin{matrix}&-n,&1+u+v+n&\\
& & u+1 &\end{matrix};\frac{1-x}{2}\right),\quad n\in\mathbb{N}.
$$
In terms of the hypergeometric polynomials this implies (with $i=j=1$)
$$
n! \frac{\Gamma((u+v+1)n+1)}{\Gamma((u+v+2)n+1)} J_n^{(u n,v n)}(2x+1)=\HyperP{2+u+v}{1+u}{n}(x).
$$
Thus, part (b) of Corollary~\ref{cor:real_zeros_hypergeo_polys} and the remark after it yield that, for any sequences $u_n,v_n\geq 0$ such that $u_n/n\to u\geq 0, v_n/n\to v\geq 0$,
$$
\lsem J_n^{(u_n,v_n)}\rsem_n \toweak \mu_{J^{(u,v)}},
$$
where $\mu_{J^{(u,v)}}$ is the pushforward under $x\mapsto 2x+1$ of the probability measure $\hat{\mu}_{J^{(u,v)}}$ with the Cauchy transform
\begin{equation*}
G_{\hat{\mu}_{J^{(u,v)}}}(t)=\frac{-t(u+v)-u+\sqrt{(t(u+v)+u)^2+4(1+u+v)t(1+t)}}{2t(1+t)}.
\end{equation*}
The above formula follows by solving the quadratic equation
$$
\eee^{-g'(\alpha)}=t\quad \Longleftrightarrow\quad \frac{\alpha(u+\alpha)}{(1-\alpha)(1+u+v+\alpha)}=t\quad \Longleftrightarrow\quad (1+t)\alpha^2+(t(u+v)+u)\alpha-t(1+u+v)=0$$
and picking out of two solutions the one satisfying $G_{\hat{\mu}_{J^{(u,v)}}}(t)\sim t^{-1}$ as $t\to+\infty$. Therefore,
\begin{equation}\label{eq:Stieltjes_Jacobi}
G_{\mu_{J^{(u,v)}}}(t)=\frac{1}{2}G_{\hat{\mu}_{J^{(u,v)}}}\left(\frac{t-1}{2}\right)=\frac{-t(u+v)-(u-v)+\sqrt{(t(u+v)+(u-v))^2+4(1+u+v)(t^2-1)}}{t^2-1}.
\end{equation}
From this equation one case deduce the density of $\mu_{J^{(u,v)}}$ by an appeal to Stieltjes-Perron formula~\eqref{eq:Perron_inversion}. We omit these standard calculations and refer to~\cite{dette_studden} or Section 5.3 in~\cite{martinez}.

The special case $u=v=\gamma-1/2$ corresponds to the Gegenbauer polynomials with parameter $\gamma\geq 1/2$, then
\begin{equation}\label{eq:Stieltjes_Gegenbauer}
	G_{\mu_{J^{(\gamma-1/2,\gamma-1/2)}}}(t)=\frac{-t(2\gamma-1)+\sqrt{t^2(2\gamma-1)^2+8\gamma(t^2-1)}}{t^2-1}.
\end{equation}
The case $u=v=0$ corresponds to the Legendre polynomials with
\begin{equation}\label{eq:Stieltjes_Legendre}
G_{\mu_{J^{(0,0)}}}(t)=\frac{2}{\sqrt{t^2-1}},\quad t\notin [-1,1],
\end{equation}
which means that $\mu_{J^{(0,0)}}$ is the arcsine distribution with the density $x\mapsto 2\pi^{-1}(1-x^2)^{-1/2}$, $x\in [-1,1]$.

\begin{figure}[t]
	\centering
	\includegraphics[width=0.33\linewidth]{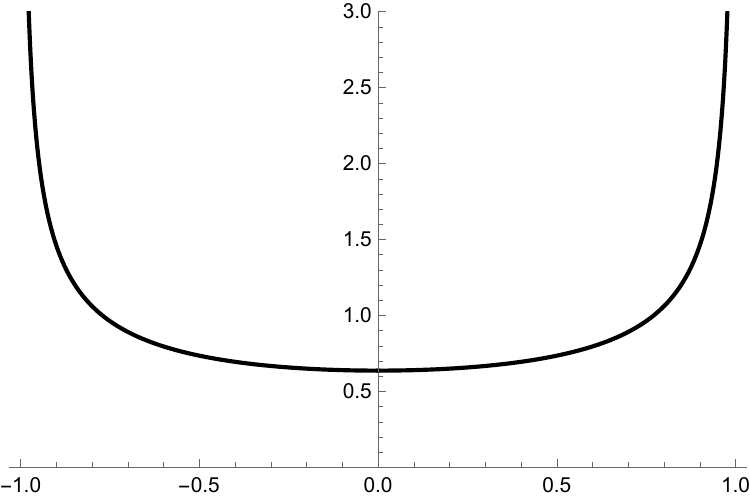}\includegraphics[width=0.33\linewidth]{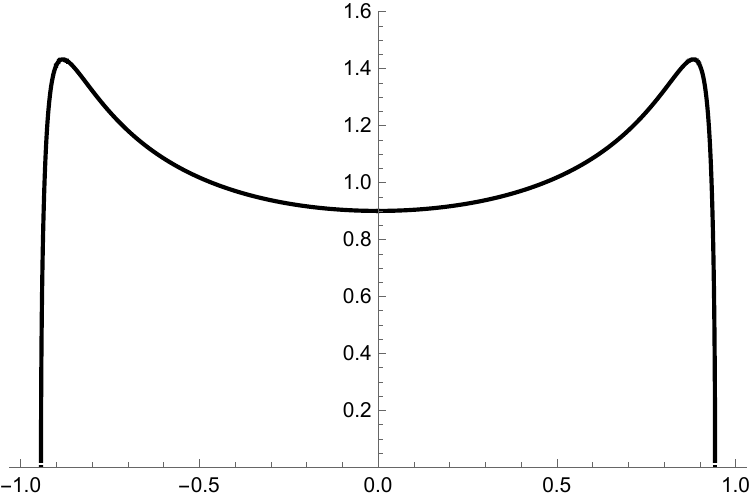}\includegraphics[width=0.33\linewidth]{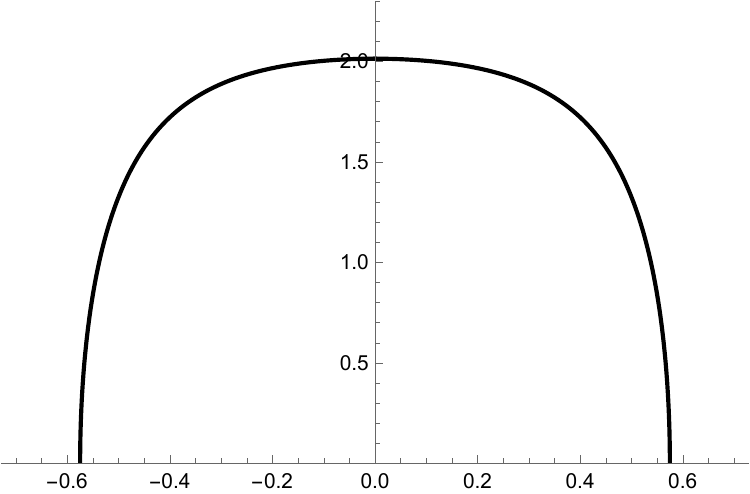}
	\caption{The limiting distribution of zeros of Legendre polynomials (left, $\gamma=1/2$), Gegenbauer polynomials for $\gamma=1$ (center) and for $\gamma=5$ (right).}
	\label{fig:Gegenbauer}
\end{figure}

\subsection{Little \texorpdfstring{$q$}{q}-Laguerre polynomials}
The $q$-analogues of classical orthogonal polynomials can be interpreted as quantized versions of these polynomials, where continuous variables are replaced by discrete ones taking values in a $q$-geometric lattice $\{q^k\}_{k\geq 0}$, and derivatives become by $q$-difference operators. We refer to the monograph~\cite{KLS} for an exhaustive overview of all such $q$-orthogonal polynomials, which are real rooted with roots in the support of the weight function. Their empirical limiting zero distribution can be obtained using exponential profiles, but we restrict ourselves to one particularly illustrative family. One possible $q$-deformed model of Laguerre polynomials leads to the \emph{little $q$-Laguerre polynomials} $P_n(x;a|q)$, whose definition involves `$q$-factorials' and `$q$-hypergeometric series' ${} _2\phi_1$, and is explicitly given by
\begin{align*}
	P_n(-x;a|q)&=_{2}\phi_1\left(\begin{matrix}q^{-n} & 0 \\ \multicolumn{2}{c}{aq} & \end{matrix};q,-qx\right)=\sum_{k=0}^\infty \frac{(q^{-n};q)_k\,(0;q)_k}{(aq;q)_k\,(q;q)_k}(-q x)^k\\
	&=\sum_{k=0}^{\infty}\prod_{j=1}^k \frac{q^{j-n}-q}{(1-aq^j)(1-q^j)} x^k=\sum_{k=0}^{n}\prod_{j=1}^k \frac{q^{j-n}-q}{(1-aq^j)(1-q^j)} x^k, \quad q\in (0,1),\quad a\in [0,1].
\end{align*}
We chose negative arguments $-x$ so that the coefficients
$$
a_{k:n}
=\prod_{j=1}^k\frac{q^{j-n}-q}{(1-aq^j)(1-q^j)}\ge 0
$$
are nonnegative for $a\le1< q^{-1}$. We are interested in a regime when $q\to 1-$. It turns out that a non-trivial profile is obtained if we let $q:=\eee^{-\lambda/n}$ for some fixed $\lambda>0$. Then the exponential profile is, for $\alpha \in (0,1)$,
\begin{align*}
	g(\alpha)
	=\lim_{n\to\infty}
	\frac{1}{n}\log a_{\lfloor \alpha n\rfloor:n}
	&=\lim_{n\to\infty}
	\frac{1}{n}\sum_{j=1}^{\lfloor \alpha n\rfloor}
	\log\left(
	\frac{\eee^{-\lambda(j/n-1)}-\eee^{-\lambda/n}}{(1-a\eee^{-\lambda j/n})(1-\eee^{-\lambda j/n})}
	\right)\\
	&=\lim_{n\to\infty}
	\frac{1}{n}\sum_{j=1}^{\lfloor \alpha n\rfloor}
	\log\left(
	\frac{\eee^{-\lambda(j/n-1)}-1}{(1-a\eee^{-\lambda j/n})(1-\eee^{-\lambda j/n})}
	\right)\\
	&=\int_0^\alpha
	\log\left(
	\frac{\eee^{\lambda(1-x)}-1}{(1-a\eee^{-\lambda x})(1-\eee^{-\lambda x})}
	\right){\rm d}x,
\end{align*}
by a Riemann summation in the last step. By Theorem \ref{theo:exp_profile_implies_zeros}, we have weak convergence of the empirical zero distributions
$$
\lsem P_n(-x;a|\eee^{-\lambda/n})\rsem_n\toweak \mu_{a|\lambda}(-\cdot),
$$ where $\mu_{a|\lambda}(-\cdot)$ has a Cauchy transform $G^{-}_{a|\lambda}$ such that $t\mapsto tG^{-}_{a|\lambda}(t)$ is the inverse of $\alpha\mapsto \eee ^{-g'(\alpha)}$. Differentiating and exponentiating $g$ yields
$$
\eee^{-g'(\alpha)}=	\frac{(1-a\eee^{-\lambda \alpha})(1-\eee^{-\lambda \alpha})}{\eee^\lambda \eee^{-\lambda\alpha}-1}
= f(h(\alpha)),
$$
where
$$
h(\alpha)=\eee^{-\lambda\alpha},\qquad
f(x)=\frac{(1-ax)(1-x)}{\eee^{\lambda}x-1}.
$$
Observe that $h^{\leftarrow}(x)=-\frac 1 \lambda\log x$. Solving a quadratic equation we find the inverse of $f$ as
$$
f^{\leftarrow}(t)
=\frac{1}{2a}\left(a+1+\eee^{\lambda }t\pm\sqrt{(a+1+\eee^{\lambda }t)^2-4a -4a t}\right).
$$
Hence, $tG^{-}_{a|\lambda}(t)=h^{\leftarrow}(f^{\leftarrow}(t))$ implies that the Cauchy transform $G_{a|\lambda}(t)=-G^{-}_{a|\lambda}(-t)$ of $\mu_{a|\lambda}$ is given by
\begin{align}\label{eq:q-G}
	G_{a|\lambda}(t)=-\frac 1 {\lambda t} \log \left[ \frac{1}{2a}\left(a+1-\eee^{\lambda }t +\sqrt{(a+1-\eee^{\lambda }t)^2+4a(t-1)}\right)\right],
\end{align}
where the branch of the square root is chosen such that $G_{a|\lambda}(t)\sim \frac{1}{t}$, as $t\to-\infty$. In order to find the support of $\mu_{a|\lambda}$ observe that the right-hand side of~\eqref{eq:q-G} has a branch cut on the positive real line either if
\begin{equation}\label{eq:q-G-quadratic_equation}
(a+1-\eee^{\lambda }t)^2+4a(t-1)\le 0
\end{equation}
or when the argument of the logarithm is negative. By solving the quadratic inequality we find that~\eqref{eq:q-G-quadratic_equation} holds if and only if $t\in [t_-,t_+]$, where
$$
t_\pm=\eee^{-\lambda}\big( a+1-2a\eee^{-\lambda}\pm 2\sqrt{a(1-a\eee^{-\lambda})(1-\eee^{-\lambda})}\big)=\eee^{-\lambda}\big(\sqrt{1-a\eee^{-\lambda}}\pm\sqrt{a(1-\eee^{-\lambda})}\big)^2.
$$
Observe that $0\leq t_{-}\leq t_{+}$ and, by the AM-GM inequality $t_{+}\leq 4\eee^{-\lambda}\sqrt{a(1-\eee^{-\lambda})(1-a\eee^{-\lambda})}$. The right-hand side, as a function of $a\in [0,1]$, attains the maximum at $a^{\ast}=\eee^{\lambda}/2$, if $\eee^{\lambda}\leq 2$ or at $a^{\ast}=1$, otherwise. In both cases, this yields $t_{+}\leq 1$. Moreover, for $t\in\mathbb{R}\setminus [t_-,t_+]$, the argument of the logarithm in~\eqref{eq:q-G} is nonpositive if and only if $t\geq \eee^{-\lambda}(a+1)$ and $t\leq 1$. Using also that
$t_{-}\leq \eee^{-\lambda}(a+1)$, we conclude that the natural domain of analyticity of $G_{a|\lambda}$ is
$$
\begin{cases}
\mathbb{C}\bsl [t_{-},t_{+}],& \text{if } \lambda\leq \log (1+a),\\
\mathbb{C}\bsl [t_{-},1],& \text{if } \lambda> \log (1+a).
\end{cases}
$$
\begin{figure}[t]
	\centering
	\includegraphics[width=0.33\linewidth]{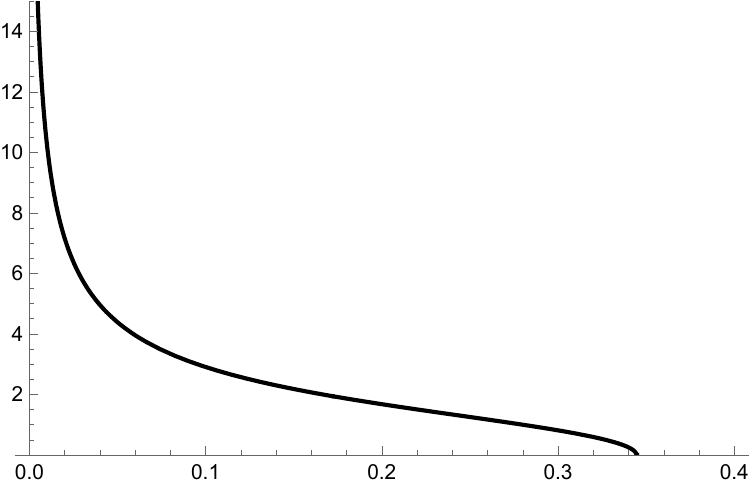}\includegraphics[width=0.33\linewidth]{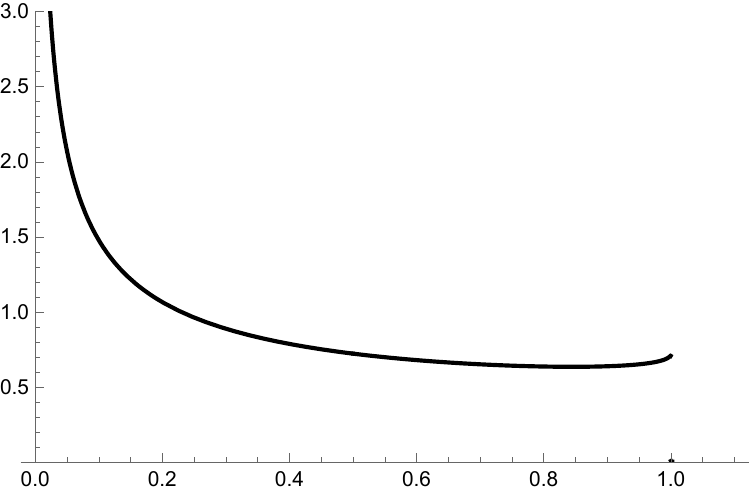}\includegraphics[width=0.33\linewidth]{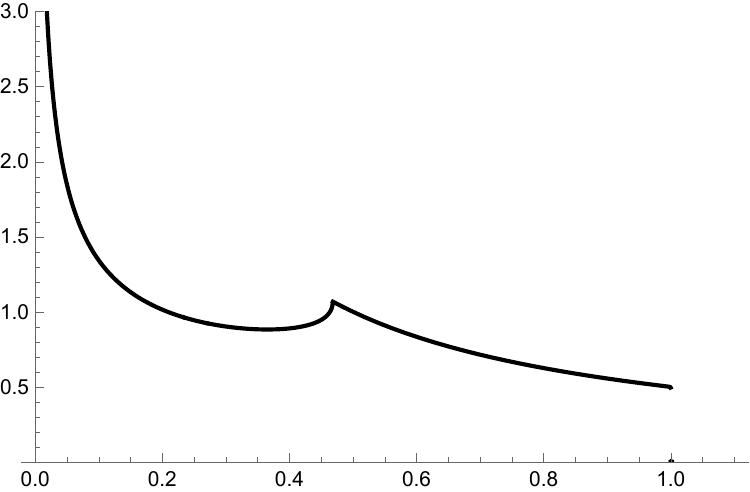} \includegraphics[width=0.33\linewidth]{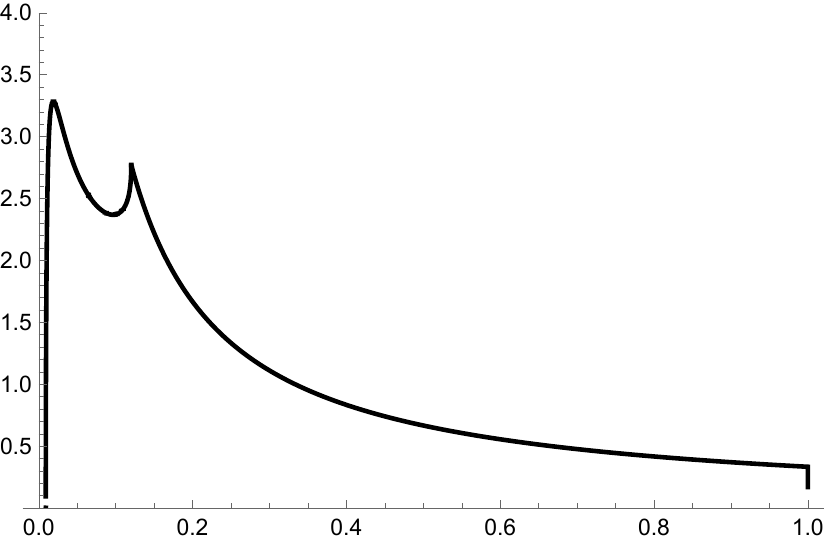}\includegraphics[width=0.33\linewidth]{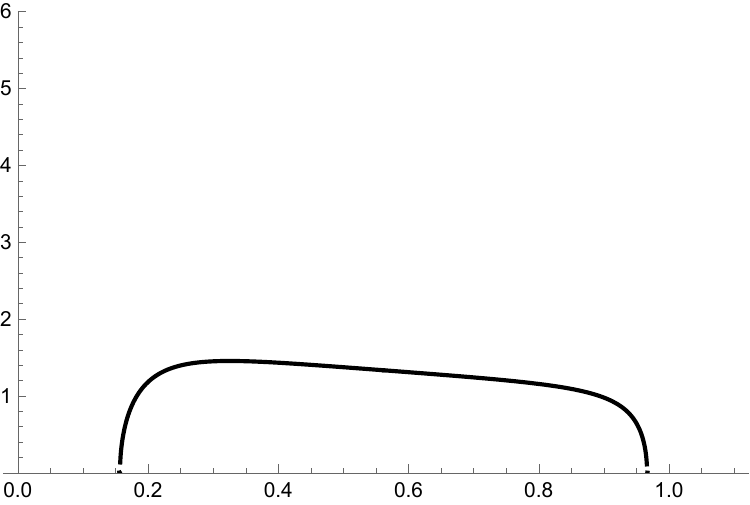}\includegraphics[width=0.33\linewidth]{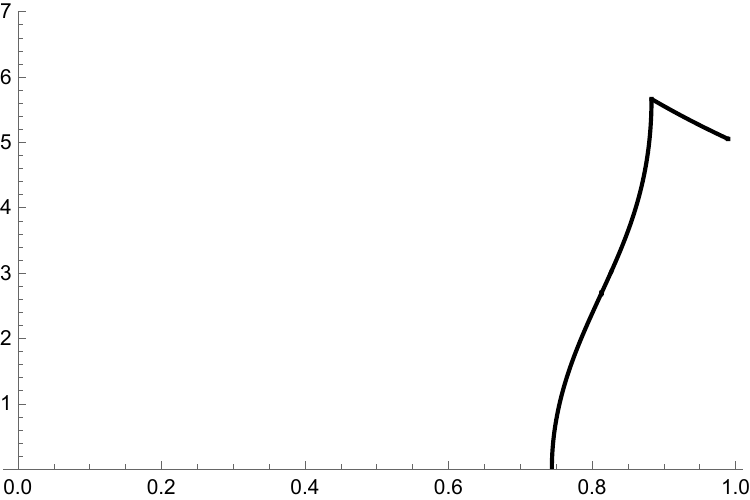}
	\caption{The limiting probability densities of zeros of little $q$-Laguerre polynomials for $a=1$ in the top row with $\lambda\in\{1/10,\log(2),2\}$ (from left to right) and in the bottom row for $a=1/3,\lambda=3$ (left), $a=1/2,\lambda=1/4$ (center) and $a=1/100,\lambda=1/5$ (right).}
	\label{fig:q-Laguerre}
\end{figure}
By the inversion formula~\eqref{eq:Perron_inversion} we conclude that the density $	p_{a|\lambda}(x)=-\frac 1  \pi \lim_{y\to 0} \Im G_{a|\lambda}(x+\ii y)$ of $\mu$ on $[0,+\infty)$ is
\begin{align}\label{eq:p_qa}
	p_{a|\lambda}(x) =\begin{cases}
		\frac 1 {\lambda x}, \ &\text{ if }x \in[t_+,1],\quad \lambda>\log(1+a), \\
		\frac 1 {\lambda \pi x} \arg\big(a+1-\eee^{\lambda }x+\eee^{\lambda}\ii\sqrt{(x-t_-)(t_+-x)}\big), \ &\text{ if }x\in[t_-,t_+], \\
		0,\ &\text{ otherwise.}
	\end{cases}
\end{align}
Here an additional term appearing when $\lambda>\log(1+a)$ is due to the logarithm crossing the branch cut. In this case $\log(w)=\log|w|+i(\arg(w)\pm\pi \epsilon)$ and $\epsilon\in\{0,1\}$ depends on the approaching direction.

Let us now discuss some illustrative simplifications. Set $a=1$, then $t_-=0$ and consider the symmetrized distribution that is obtained under the push-forward of $x\mapsto \pm \sqrt {x}$ (similar to the link \eqref{eq:Hermite-Laguerre}). The corresponding density is
$$\tilde p_{\lambda}(x)=\frac 1 {\lambda |x|}\Big(\frac 1 \pi \arg\big(2-\eee^{\lambda }x^2+\eee^{\lambda}\ii x\sqrt{t_+-x^2}\big)\ind_{[0,t_+]}(x^2)+\ind_{[t_+,1]}(x^2)\Big) $$
where $t_+=4\eee^{-\lambda}(1-\eee^{-\lambda})$. Interestingly, this is precisely the density of $q$-deformed GUE matrices, that has been uncovered in \cite{q-def}. Therein, discrete $q$-Hermite I polynomials have been investigated, which share the same exponential profile as $P_n(-x^2;1|q)$ and, hence, the same limiting empirical zero distribution.

Moreover, we can recover the continuous analogue in the $\lambda\to 0$ limit. Indeed, zooming into $\mu_{1|\lambda}$ as $\hat{\mu}_{1|\lambda}=\mu_{1|\lambda}(\lambda \cdot)$, the corresponding Cauchy transform satisfies $\hat{G}_{1|\lambda}(t)=\lambda G_{1|\lambda}(\lambda t)$. Using standard approximations like $\eee^\lambda=1+\lambda+o(\lambda)$, $\log(1+w)=w+o(w)$, we obtain from \eqref{eq:q-G}
\begin{align*}
	\lim_{\lambda\to 0+}\hat{G}_{1|\lambda} (t)  =-\lim_{\lambda\to 0}\frac 1 {\lambda t} \log \Big[ \frac{1}{2}\left(2-\eee^{\lambda }\lambda t+\sqrt{(2-\eee^{\lambda }\lambda t)^2+4(\lambda t-1)}\right)\Big]=\frac{1}{2t}\big(t-\sqrt{t(t-4)}\big).
\end{align*}
The right-hand side is the Cauchy transform of the standard Marchenko--Pastur law $\mathsf{MP}_{1,1}$, see~\eqref{eq:marchenko_pastur_Stieltjes}.

\section{Polynomials related to free probability} \label{sec:free_probab_ensembles}

\subsection{Polynomials related to \texorpdfstring{$\boxtimes$}{boxtimes}-infinitely divisible distributions}
For any two probability measures $\nu_1$ and $\nu_2$ on $[0,\infty)$, their multiplicative free convolution $\nu_1\boxtimes \nu_2$ is another probability measure on $[0,\infty)$ defined by Voiculescu in~\cite{voiculescu_symmetries,voiculescu_multiplication}. The class of probability measures that are infinitely divisible with respect to $\boxtimes$ has been described by Bercovici and Voiculescu~\cite{bercovici_voiculescu_levy_hincin,bercovici_voiculescu} in terms of a L\'evy--Khintchine-type representation. The basic building blocks of these $\boxtimes$-divisible distributions are the free multiplicative analogues of the normal and the Poisson distributions. In this section, we introduce polynomials which may be considered as finite free probability~\cite{marcus_spielman_srivastava,marcus2021polynomial} analogues of these basic $\boxtimes$-divisible distributions in the same way as Hermite (respectively, Laguerre) polynomials are finite free analogues of the free normal (semicircle) and free Poisson (Marchenko--Pastur) distributions.  In particular, the empirical zero distributions of the polynomials we introduce  converge to $\boxtimes$-divisible probability measures.

Let us first briefly  recall some notions from free probability; see, for example, \cite[\S~3.6]{voiculescu_nica_dykema_book}, \cite{BB05_semigroup,bercovici_voiculescu}.
Let $\mathcal M_{[0,\infty)}$ be the set of probability measures on $[0,\infty)$. The \textit{$\psi$-transform} of a probability measure $\nu\in \mathcal M_{[0,\infty)}$ is the function $\psi_\nu(z)$ defined on the open unit disk $\bD= \{z\in \C : |z|<1\}$ by
\begin{equation}\label{eq:psi_transf_def}
\psi_\nu(z) = \int_{[0,\infty)} \frac{uz}{1-uz} \nu(\dd u),
\qquad z\in \bD.
\end{equation}
The \textit{$S$-transform} and the \textit{$\Sigma$-transform} of $\nu\neq \delta_0$ are defined by
\begin{equation}\label{eq:S_transf_def}
S_\nu(z) = \frac {1+z}{z} \psi^{-1}_\nu(z),
\qquad
\Sigma_\nu(z) = S_\nu \left(\frac {z}{1-z}\right)
=
\frac 1z \psi^{-1}_\nu\left(\frac{z}{1-z}\right),
\end{equation}
respectively, if $|z|$ is sufficiently small. The free multiplicative convolution of $\nu_1,\nu_2 \in \mathcal M_{[0,\infty)}\backslash\{\delta_0\}$ can be characterized by
\begin{equation}\label{eq:S_transform_linearizes_conv}
S_{\nu_1 \boxtimes \nu_2} (z) = S_{\nu_1}(z)  S_{\nu_2} (z),
\qquad
\Sigma_{\nu_1 \boxtimes \nu_2} (z) = \Sigma_{\nu_1}(z)  \Sigma_{\nu_2} (z).
\end{equation}

The class of $\boxtimes$-infinitely divisible distributions has been described in~\cite{bercovici_voiculescu_levy_hincin,bercovici_voiculescu} as follows; see also~\cite[Theorem 2.3]{Arizmendi_Hasebe}.  A probability measure $\nu\neq \delta_0$ on $[0,\infty)$ is $\boxtimes$-infinitely divisible if and only if its $\Sigma$-transform admits a L\'evy--Khintchine-type representation
\begin{equation}\label{eq:free_mult_levy_khinchine}
\Sigma_\nu(z) = \exp(v_\nu(z)),
\quad
v_\nu(z) = c_0 + \int_{[0,+\infty]} \frac{1+tz}{z-t} \rho(\dd t),
\quad
z\in(-\infty,0),
\end{equation}
for some finite measure $\rho$ on $[0,+\infty]$ and some $c_0\in \R$. If $\rho$ has an atom at $+\infty$, we interpret $\frac{1+tz}{z-t}$ at $t=+\infty$ as $-z$.

In finite free probability~\cite{marcus2021polynomial,marcus_spielman_srivastava}, the analogue of $\boxtimes$ is an operation $\boxtimes_n$ defined on polynomials of degree at most $n$ as follows:
$$
\sum_{k=0}^n a_k' x^k \boxtimes_n \sum_{k=0}^n a_k'' x^k =  \sum_{k=0}^n (-1)^{n-k} \frac{a_k' a_k''}{\binom n k} x^k.
$$

Next we state  a general result allowing to construct sequences of polynomials whose empirical zero distributions converge.
\begin{proposition}\label{prop:general_conv_to_free_mult_ID_law}
For every $n\in \N$ let  $Q_n(x)=\sum_{k=0}^n (-1)^{n-k} \binom n k b_{k:n} x^k$ be a polynomial with real nonnegative roots and $b_{0:n}\geq 0,\ldots, b_{n:n}\geq 0$. Suppose that for every $\alpha \in (0,1)$, we have
$$
\lim_{n\to\infty} b_{\lfloor \alpha n\rfloor:n} = \eee^{\tilde g (\alpha)}
$$
with some function $\tilde g:(0,1) \to \R$. Then, the polynomials
$$
P_n(x):= \underbrace{Q_n(x) \boxtimes_n  \cdots \boxtimes_n Q_n(x)}_{n\text{ times}} = \sum_{k=0}^n (-1)^{n-k} \binom n k  b^n_{k:n} x^k
$$
also have real nonnegative roots and $\lsem P_n\rsem_n$ converges weakly on $[0,\infty)$ to a probability measure $\nu$ with
$$
S_\nu(t) = \eee^{\tilde g'(1+t)},
\quad
t\in (-1,0)
\qquad
\Sigma_\nu(z) = \eee^{\tilde g'\left(\frac 1 {1-z}\right)},
\quad
z\in (-\infty, 0).
$$
\end{proposition}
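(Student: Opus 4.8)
The plan is to reduce the statement to Theorem~\ref{theo:exp_profile_implies_zeros} applied to a suitably reflected polynomial, and then to translate the resulting profile/Cauchy-transform relation into the language of $S$- and $\Sigma$-transforms. First I would record the coefficient formula for $P_n$: unwinding the definition of $\boxtimes_n$ and using $(-1)^{2(n-k)}=1$, an easy induction on the number of factors gives $P_n(x)=\sum_{k=0}^n(-1)^{n-k}\binom nk b_{k:n}^n x^k$, as asserted. Real-rootedness with nonnegative roots is inherited from $Q_n$ because $\boxtimes_n$ preserves the class of degree-$n$ polynomials having only nonnegative real roots (the Marcus--Spielman--Srivastava / Marcus result, \cite{marcus2021polynomial,marcus_spielman_srivastava}). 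Since $P_n$ has only nonnegative roots, Theorem~\ref{theo:exp_profile_implies_zeros} does not apply to $P_n$ directly; instead I would pass to $\hat P_n(x):=(-1)^nP_n(-x)=\sum_{k=0}^n\binom nk b_{k:n}^n x^k$, which has nonnegative coefficients and only nonpositive roots, and whose empirical zero measure is the reflection of that of $P_n$.

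Next I would compute the exponential profile of $(\hat P_n)$. The coefficient is $\hat a_{k:n}=\binom nk b_{k:n}^n$, so $\tfrac1n\log\hat a_{\lfloor\alpha n\rfloor:n}=\tfrac1n\log\binom n{\lfloor\alpha n\rfloor}+\log b_{\lfloor\alpha n\rfloor:n}$. The first term converges to the binary entropy $-\alpha\log\alpha-(1-\alpha)\log(1-\alpha)$ by Stirling's formula, and the second converges to $\tilde g(\alpha)$ by hypothesis, so the profile on $(0,1)$ is $g(\alpha)=-\alpha\log\alpha-(1-\alpha)\log(1-\alpha)+\tilde g(\alpha)$ (here $\underline m=0$, $\overline m=1$, so the divergence-to-$-\infty$ condition of Definition~\ref{def:exp_profile} is vacuous). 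Differentiating gives $g'(\alpha)=\log\frac{1-\alpha}{\alpha}+\tilde g'(\alpha)$, hence $\eee^{-g'(\alpha)}=\frac{\alpha}{1-\alpha}\eee^{-\tilde g'(\alpha)}$. Theorem~\ref{theo:exp_profile_implies_zeros} then yields $\lsem\hat P_n\rsem_n\toweak\hat\mu$ on $[-\infty,0]$, with $\hat\mu$ carried by $(-\infty,0)$ (no atoms at $0$ or $-\infty$, since $\underline m=0$, $\overline m=1$), and with Cauchy transform $\hat G$ such that $t\mapsto t\hat G(t)$ is the inverse on $(0,\infty)$ of $\alpha\mapsto\frac{\alpha}{1-\alpha}\eee^{-\tilde g'(\alpha)}$.

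Finally I would reflect back and pass to transforms. Writing $\nu:=\hat\mu(-\cdot)$ for the weak limit of $\lsem P_n\rsem_n$ on $[0,\infty)$, one has $\hat G(t)=-G_\nu(-t)$, so with $s=-t$ the function $W(s):=sG_\nu(s)$ equals $\alpha$ exactly when $s=-\frac{\alpha}{1-\alpha}\eee^{-\tilde g'(\alpha)}$. Combining the standard identity $sG_\nu(s)=1+\psi_\nu(1/s)$ (which follows from $\frac1wG_\nu(1/w)=\int(1-uw)^{-1}\nu(\dd u)=1+\psi_\nu(w)$) with the substitution $y=\alpha-1$, a short computation gives the inverse $\psi$-transform $\psi_\nu^{-1}(y)=\frac{y}{1+y}\eee^{\tilde g'(1+y)}$. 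Plugging this into the definitions~\eqref{eq:S_transf_def} yields $S_\nu(z)=\frac{1+z}{z}\psi_\nu^{-1}(z)=\eee^{\tilde g'(1+z)}$ and $\Sigma_\nu(z)=S_\nu\big(\frac{z}{1-z}\big)=\eee^{\tilde g'(1/(1-z))}$, with the ranges $z\in(-1,0)$ and $z\in(-\infty,0)$ coming from the fact that $t\mapsto t\hat G(t)$ is a bijection onto $(0,1)$.

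The main obstacle I anticipate is not any single hard estimate but the careful bookkeeping in the last paragraph: keeping track of the reflection $x\mapsto-x$, the resulting sign in $\hat G(t)=-G_\nu(-t)$, and the change of variables relating the profile inverse to $\psi_\nu^{-1}$, so that the final substitution $\alpha=1+z$ produces exactly $S_\nu(z)=\eee^{\tilde g'(1+z)}$ on the correct interval. A secondary point worth checking is that the hypothesis genuinely delivers a legitimate exponential profile in the sense of Definition~\ref{def:exp_profile} (the strict concavity and smoothness of $g$ then come for free from Theorem~\ref{theo:exp_profile_implies_zeros}), together with the invocation of the real-rootedness preservation of $\boxtimes_n$, which is the one external ingredient the argument relies on.
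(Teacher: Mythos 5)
Your proposal is correct and follows essentially the same route as the paper's proof: real-rootedness of $P_n$ via the Szeg\H{o}/Marcus--Spielman--Srivastava preservation theorem \cite{marcus_spielman_srivastava}, reflection to $(-1)^n P_n(-x)$, the profile $g(\alpha)=\tilde g(\alpha)-\alpha\log\alpha-(1-\alpha)\log(1-\alpha)$, and an application of Theorem~\ref{theo:exp_profile_implies_zeros} with $\underline{m}=0$, $\overline{m}=1$. The only (harmless) difference is that where the paper simply cites Lemma~6.1 of \cite{jalowy_kabluchko_marynych_zeros_profiles_part_I} for the relation $S_{\mu_+}(t)=-\frac{1+t}{t}\,\eee^{g'(1+t)}$, you re-derive it inline from the identity $sG_\nu(s)=1+\psi_\nu(1/s)$ together with the inverse-function characterization of $t\mapsto t\hat G(t)$, and your computation of $\psi_\nu^{-1}$, $S_\nu$ and $\Sigma_\nu$ (including the sign bookkeeping $\hat G(t)=-G_\nu(-t)$ and the ranges of $t$ and $z$) checks out.
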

\begin{proof}
The fact that  all roots of $P_n$ are nonnegative  follows from a theorem of Szeg\H{o}, see~\cite[Theorem~1.6]{marcus_spielman_srivastava}.
The sequence $((-1)^n P_n(-x))_{n\in \N}$ has exponential profile
$$
g(\alpha) := \lim_{n\to\infty} \frac 1n \log \left(  \binom n {[\alpha n]} b_{[\alpha n]:n}^n\right) =  \tilde g (\alpha) -\alpha\log\alpha-(1-\alpha)\log(1-\alpha), \qquad \alpha \in (0,1).
$$
Recall that the profile is automatically infinitely differentiable. Its derivative is $g'(\alpha)=\log(\frac{1-\alpha}{\alpha})+\tilde g'(\alpha)$.  By Theorem~\ref{theo:exp_profile_implies_zeros}, $\lsem P_n(-x)\rsem_n$ converges weakly to some probability measure $\mu$ on $[-\infty, 0]$ such that $\alpha\mapsto \eee^{-g'(\alpha)}$ is the inverse of $t\mapsto tG_{\mu}(t)$, where $G_{\mu}$ is the Cauchy transform of $\mu$. Moreover, we have $\mu(\{0\})= \mu(\{-\infty\}) = 0$ since the profile is defined on the whole of $(0,1)$. Let $\mu_{+}$ be a reflection of $\mu$ with respect to $0$ given by $\mu_{+}(A):=\mu(-A)$, for Borel sets $A\subseteq \R$. Now,  Lemma 6.1 in~\cite{jalowy_kabluchko_marynych_zeros_profiles_part_I} states that the $S$-transform $S_{\mu_{+}}$ of $\mu_{+}$ is related to the profile $g$ by
\begin{equation}\label{eq:S_mu-via-profiles}
S_{\mu_{+}}(t)=-\frac{1+t}{t}\eee^{g'(1+t)}, \qquad t\in (-1,0).
\end{equation}
Thus, $\lsem P_n \rsem_n$ converges weakly on $[0,\infty)$ to the probability measure $\nu:= \mu_+$ with the $S$- and $\Sigma$-transforms
$$
S_\nu(t) = - \frac{1+t}{t}\eee^{g'(1+t)}=\eee^{\tilde g '(1+t)}, 
\quad
\Sigma_\nu(z) = S_\nu\left(\frac {z}{1-z}\right) = \eee^{\tilde g'\left(\frac 1 {1-z}\right)}, 
$$
for all $t\in (-1,0)$ and $z\in (-\infty, 0)$.
Note that $\nu$ is determined uniquely by its $S$- and $\Sigma$-transforms on these intervals since these transforms are analytic inside some disk centered at $0$.
\end{proof}

\begin{remark}
We are mostly interested in the case when $\nu$ is a $\boxtimes$-infinitely divisible distribution on $[0,\infty)$ with representation~\eqref{eq:free_mult_levy_khinchine}. Then, $\tilde g'(\alpha)=v_\nu(1-1/\alpha)$.
\end{remark}

In the following, we shall construct  explicit polynomials related to the free multiplicative normal and Poisson distributions.  These distributions are $\boxtimes$-infinitely divisible with $\rho = a \delta_1$, $a>0$ (in the normal case) and $\rho = a \delta_t$, $t\in [0,+\infty] \backslash\{1\}$, $a>0$ (in the Poisson case).

\subsubsection{Free multiplicative Hermite polynomials}
For $\sigma>0$ consider the polynomials
$$
G_n(x; \sigma^2) = \sum_{k=0}^n  (-1)^{n-k} \binom {n}{k} \eee^{\frac {\sigma^2 k(n-k)}{2}}  x^k,
\qquad n\in\mathbb{N}.
$$
\begin{theorem}\label{the0:free_mult_hermite}
For every $\sigma>0$ and $n\in \N$, all zeros of $G_n(x; \sigma^2)$ are real and positive. The empirical distribution of zeros of $G_n(x; \sigma^2/n)$ converges, as $n\to\infty$, to a probability measure $\nu_{\sigma^2}$ on $[0,\infty)$ with
$$
S_{\nu_{\sigma^2}}(t) = \eee^{-\sigma^2 (t + \frac 12)},
\quad
t\in (-1,0),
\qquad
\Sigma_{\nu_{\sigma^2}}(z) = \eee^{\frac{\sigma^2}{2} \frac{z+1}{z-1}},
\quad
z\in (-\infty, 0).
$$
\end{theorem}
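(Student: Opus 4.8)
The plan is to separate the statement into two parts that are almost independent: (i) the real-rootedness with positive zeros, valid for every $\sigma>0$ and $n$, and (ii) the weak convergence together with the identification of $S_{\nu_{\sigma^2}}$ and $\Sigma_{\nu_{\sigma^2}}$. Part (ii) will be an immediate application of Proposition~\ref{prop:general_conv_to_free_mult_ID_law}, so essentially all of the work is concentrated in part (i). The bridge between the two is the semigroup identity
$$
G_n(\,\cdot\,;s)\boxtimes_n G_n(\,\cdot\,;t)=G_n(\,\cdot\,;s+t),\qquad s,t\ge 0,
$$
which follows directly from the $\boxtimes_n$-formula, since the exponents $\tfrac{s\,k(n-k)}{2}$ are additive while the prefactors $(-1)^{n-k}\binom nk$ behave correctly under $\boxtimes_n$. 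In particular $G_n(\,\cdot\,;\sigma^2/n)=G_n(\,\cdot\,;\sigma^2/n^2)^{\boxtimes_n n}$.

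For part (ii) I would apply Proposition~\ref{prop:general_conv_to_free_mult_ID_law} with the base polynomial $Q_n:=G_n(\,\cdot\,;\sigma^2/n^2)$, that is with $b_{k:n}=\exp\!\bigl(\tfrac{\sigma^2 k(n-k)}{2n^2}\bigr)\ge0$. Its real nonnegativity of roots is the special case of part (i) at parameter $\sigma^2/n^2$. One checks $b_{\lfloor\alpha n\rfloor:n}\to \exp\!\bigl(\tfrac{\sigma^2}{2}\alpha(1-\alpha)\bigr)$, so $\tilde g(\alpha)=\tfrac{\sigma^2}{2}\alpha(1-\alpha)$ and $\tilde g'(\alpha)=\tfrac{\sigma^2}{2}(1-2\alpha)$. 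Substituting into the formulas of the Proposition gives $S_{\nu_{\sigma^2}}(t)=\eee^{\tilde g'(1+t)}=\eee^{-\sigma^2(t+1/2)}$ and $\Sigma_{\nu_{\sigma^2}}(z)=\eee^{\tilde g'(1/(1-z))}=\eee^{\frac{\sigma^2}{2}\frac{z+1}{z-1}}$, exactly the claimed transforms. This step is routine once part (i) is in hand.

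Part (i) is the crux and the main obstacle. I would view $G_n(\,\cdot\,;t)$ as the image of $(x-1)^n$ under the multiplier $a_k\mapsto\eee^{t k(n-k)/2}a_k$, i.e. $G_n(x;t)=\exp\!\bigl(\tfrac t2(n\theta-\theta^2)\bigr)(x-1)^n$ with $\theta=x\tfrac{\dd}{\dd x}$; equivalently $p_t:=G_n(\,\cdot\,;t)$ solves the finite free multiplicative heat flow $\partial_t p=\tfrac12\bigl((n-1)xp'-x^2p''\bigr)$, using $\theta^2=\theta+x^2\tfrac{\dd^2}{\dd x^2}$. While the roots $r_1(t),\dots,r_n(t)$ are simple they satisfy the induced ODE
$$
\dot r_\ell=-\tfrac{n-1}{2}\,r_\ell+r_\ell^{\,2}\sum_{j\ne \ell}\frac{1}{r_\ell-r_j}.
$$
The sign of the diffusion coefficient, $-x^2/2\le0$, makes this dynamics \emph{repulsive}: as two real roots approach, the term $r_\ell^{\,2}/(r_\ell-r_j)$ blows up and drives them apart, so distinct real roots cannot collide and no complex-conjugate pair can be created; since the coefficients of $p_t$ are bounded on compact $t$-intervals, no root escapes to infinity either. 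As $G_n(0;t)=(-1)^n\ne0$ the origin is never a root, and all roots start at $x=1>0$, so they remain real for all $t\ge0$. Positivity is then free: the coefficients $(-1)^{n-k}\binom nk\eee^{\sigma^2 k(n-k)/2}$ of $G_n(x;\sigma^2)$ strictly alternate in sign, which forces a real-rooted polynomial to have only positive zeros.

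The one genuinely delicate point is the initial instant $t=0$, where $(x-1)^n$ has a root of multiplicity $n$ and the ODE above is singular. I would resolve it by a local analysis near $x=1$: setting $x=1+\sqrt t\,u$ and writing $p_t\approx t^{n/2}H(u)$, the leading balance of $\partial_t p=\tfrac12((n-1)xp'-x^2p'')$ is the constant-coefficient equation $H''-uH'+nH=0$, i.e. the probabilist Hermite equation, so $H\propto\mathrm{He}_n$ of~\eqref{eq:Hermite} (this is precisely what justifies the name). Since $\mathrm{He}_n$ is real-rooted, the $n$-fold root splits into distinct positive reals $r_\ell\approx 1+c_\ell\sqrt t$ for all small $t>0$, after which the repulsion argument keeps them real and simple. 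Alternatively, one avoids the degenerate start entirely by combining the semigroup identity with Szeg\H{o}'s theorem \cite[Theorem~1.6]{marcus_spielman_srivastava}: since $\boxtimes_n$ preserves real nonnegative roots and $G_n(\,\cdot\,;t)=G_n(\,\cdot\,;t/m)^{\boxtimes_n m}$, it suffices to prove reality for arbitrarily small $t$, which reduces the whole claim to the same short-time Hermite analysis.
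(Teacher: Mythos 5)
Your part (ii) is exactly the paper's argument: the paper also writes $G_n(x;\sigma^2/n)=\sum_{k=0}^n(-1)^{n-k}\binom nk b_{k:n}^n x^k$ with $b_{k:n}=\eee^{\frac{\sigma^2}{2}\frac kn(1-\frac kn)}$ and applies Proposition~\ref{prop:general_conv_to_free_mult_ID_law} with $\tilde g(\alpha)=\frac{\sigma^2}{2}\alpha(1-\alpha)$; your computation of $S_{\nu_{\sigma^2}}$ and $\Sigma_{\nu_{\sigma^2}}$ is correct, and your semigroup identity $G_n(\cdot;s)\boxtimes_n G_n(\cdot;t)=G_n(\cdot;s+t)$ checks out (it is stated as a remark right after the theorem). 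The real divergence is in part (i), where you overlook the tool the paper sets up two paragraphs earlier precisely for this purpose: $f(z)=\eee^{\frac{\sigma^2}{2}z(n-z)}=\eee^{\frac{\sigma^2 n}{2}z}\,\eee^{-\frac{\sigma^2}{2}z^2}$ is of the form~\eqref{eq:polya_laguerre_def} with $N=0$, i.e.\ a zero-free Laguerre--P\'olya function (note the sign: this is why $G_n$ is real-rooted while the unitary Hermite polynomials $H_n(x;\sigma^2)$, corresponding to $\eee^{+\frac{\sigma^2}{2}z^2}$, are not), so Theorem~\ref{theo:laguerre_polys} via Remark~\ref{rem:laguerre_polys_special_case} gives real-rootedness in one line; positivity from the strictly alternating signs is then argued exactly as you do. Your heat-flow route, by contrast, does buy genuine insight --- the operator representation $G_n(x;t)=\eee^{\frac t2(n\theta-\theta^2)}(x-1)^n$ with $\theta=xD$, the root ODE, and the short-time Hermite splitting explaining the name --- but at an order of magnitude more work.

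That extra work is where the gaps sit. The repulsion argument in your primary version (``the term blows up and drives them apart, so distinct real roots cannot collide'') is the standard heuristic, not a proof: as stated it does not handle simultaneous collisions of three or more roots, and excluding a conjugate pair emerging at a collision time requires an actual continuation argument with a quantitative gap bound. Your own fallback is the right repair: since $G_n(\cdot;t)=G_n(\cdot;t/m)^{\boxtimes_n m}$ and $\boxtimes_n$ preserves real nonnegative roots by Szeg\H{o}'s theorem~\cite[Theorem~1.6]{marcus_spielman_srivastava}, it suffices to treat arbitrarily small $t$, and the ODE continuation is never needed. But then the short-time analysis must be made rigorous, and your derivation of the Hermite equation is only a formal leading-order balance of the PDE. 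To close it, prove $t^{-n/2}G_n(1+\sqrt t\,u;t)\to\mathrm{He}_n(u)$ locally uniformly in $u$: expand $\eee^{\frac t2(n\theta-\theta^2)}$ and observe that $\big(\tfrac12(n\theta-\theta^2)\big)^m(x-1)^n$ vanishes at $x=1$ to order at least $n-2m$ (each application of $(n-1)xD-x^2D^2$ lowers the vanishing order by at most two), so after the rescaling the $m$-th term contributes $\tfrac{q_m(1)}{m!}u^{n-2m}t^{n/2}+O(t^{(n+1)/2})$, and the resulting degree-$n$ limit polynomial is identified as $\mathrm{He}_n$ (consistent with your $m=1$ check, $q_1(1)=-\tfrac{n(n-1)}2$). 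Then Hurwitz's theorem places exactly one root of $G_n(\cdot;t)$ in a small disc around each simple real zero of $\mathrm{He}_n$, and realness of the coefficients forces each such root to be real, since a nonreal root would bring its conjugate into the same disc. With these repairs your proof is complete and correct, though considerably heavier than the paper's appeal to Laguerre's theorem.
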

\begin{remark}
The distribution $\nu_{\sigma^2}$ is called the free multiplicative normal distribution with parameter $\sigma^2$. It appeared  in~\cite[Lemma 7.1]{bercovici_voiculescu_levy_hincin} and is $\boxtimes$-infinitely divisible with $\rho= \frac 12 \sigma^2 \delta_1$ and $c_0 = 0$ in~\eqref{eq:free_mult_levy_khinchine}. The moments and the density of $\nu_{\sigma^2}$ have been determined by Biane~\cite{biane,biane_segal_bargmann}.  For more information, see~\cite{zhong_free_normal}, \cite[Sections~4,5]{zhong_free_brownian}, \cite{anshelevich_wang_zhong,demni2011spectraldistributionfreeunitary,demni2016spectraldistributionlargesizebrownian}.
\end{remark}
\begin{remark}
For a fixed $n\in \N$, the polynomials $G_n(x; \sigma^2)$, $\sigma^2>0$, form a semigroup with respect to $\boxtimes_n$ in the sense that $G_n(x; \sigma_1^2) \boxtimes_n G_n(x; \sigma_2^2) = G_n(x; \sigma_1^2+ \sigma_2^2)$ for all $\sigma_1^2, \sigma_2^2 >0$.
\end{remark}

We need some preparations before we can start with the proof of Theorem~\ref{the0:free_mult_hermite}.  An entire function $f(z)$ belongs to the Laguerre--P\'{o}lya class if it possesses a representation as a Hadamard product
\begin{equation}\label{eq:polya_laguerre_def}
f(z)=Cz^m\eee^{cz-\frac{\sigma^2}{2}z^2}\prod_{j=1}^N\Big(1-\frac{z}{x_j}\Big)\eee^{\frac z {x_j}}
\end{equation}
for some $C,c\in\R$, $\sigma^2\geq 0$, $m\in\N\cup\{0\}$,  $N\in\N\cup\{\infty\}$ and $x_j\in\R\bsl\{0\}$ satisfying $\sum_{j=1}^Nx_j^{-2}<\infty$.  The following theorem of Laguerre  can be found in~\cite[Section~5.7 and Theorem~5.6.12]{rahman_schmeisser_book_polys} or~\cite[25:Theorem]{warner_book_zeros}.
\begin{theorem}[Laguerre]\label{theo:laguerre_polys}
If $f(z)$ is a function from the Laguerre--P\'olya class  with only negative zeros and if all zeros of the polynomial $p(x) = \sum_{k=0}^n c_k x^k \in \R[x]$ are real, then all zeros of the polynomial $\sum_{k=0}^n f(k) c_k x^k$ are also real.
\end{theorem}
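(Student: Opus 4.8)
The statement is Laguerre's classical theorem, and the plan is to prove it by combining an approximation argument (reducing the Laguerre--P\'olya function $f$ to a genuine polynomial) with an analysis of the diagonal ``coefficient multiplier'' operator $T_f\colon \sum_k c_k x^k\mapsto \sum_k f(k) c_k x^k$ that it induces. The starting observation is that $T_{fg}=T_f T_g$ and that $T_{z-\beta}$ acts on a polynomial of degree at most $n$ as the shifted Euler operator $xD-\beta$, where $D=\frac{\dd}{\dd x}$, since $(xD)x^k=kx^k$. Hence, if $f$ is a \emph{polynomial} with factorization $f(z)=C\prod_i(z-\beta_i)$, then $T_f=C\prod_i(xD-\beta_i)$, and it suffices to prove that each elementary factor $xD-\beta$ with $\beta\notin(0,n)$ maps real-rooted polynomials of degree at most $n$ to real-rooted ones.

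Next I would reduce the general Laguerre--P\'olya case to this polynomial case via Hurwitz's theorem. Using the Hadamard product \eqref{eq:polya_laguerre_def}, approximate $f$ locally uniformly by polynomials $f_N$ obtained by truncating the product and replacing each transcendental factor by a real-rooted polynomial approximant, e.g.\ $\eee^{cz}=\lim_N(1+cz/N)^N$ and $\eee^{-\frac{\sigma^2}{2}z^2}=\lim_M(1-\frac{\sigma^2}{2M}z^2)^M$. Because $f$ has no zero on $[0,n]$ (its zeros are all negative), for $N,M$ large the real zeros of $f_N$ all lie in $\R\setminus(0,n)$, so the polynomial case applies to each $f_N$. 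Since $\sum_k f_N(k)c_k x^k\to\sum_k f(k)c_k x^k$ coefficientwise with leading coefficient $f(n)c_n\neq 0$ (here $f(n)\neq 0$ as $f$ has only negative zeros), the limit has full degree $n$ and is not identically zero, so Hurwitz's theorem transfers real-rootedness to it.

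The main obstacle is the elementary lemma: for $\beta\notin(0,n)$ and $p$ real-rooted of degree at most $n$, the polynomial $(xD-\beta)p=xp'-\beta p$ is real-rooted. The reversal $R[p](x)=x^n p(1/x)$ preserves real-rootedness and conjugates $xD-\beta$ into $-(xD-(n-\beta))$, so it suffices to treat $\beta\le 0$, i.e.\ to show that $xp'+bp$ is real-rooted for $b:=-\beta\ge 0$. Here I would use a Rolle/zero-counting argument: at every nonzero root of $p$ of multiplicity $\mu$ the polynomial $xp'+bp$ vanishes to order exactly $\mu-1$, and to order $\mu_0$ at $x=0$, which already produces $n-d$ zeros, where $d$ is the number of distinct nonzero roots of $p$. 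The remaining $d$ zeros arise by applying Rolle's theorem to the auxiliary functions $x^b p(x)$ on $(0,\infty)$ and $(-x)^b p(x)$ on $(-\infty,0)$, whose interior critical points are precisely the nonzero roots of $xp'+bp$ and whose vanishing at the boundary point $0$ (for $b>0$) supplies one additional critical point on each side; the count then yields all $n$ real zeros.

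The delicate points where I expect to spend the most care are this multiplicity-and-sign bookkeeping across $x=0$ in the elementary lemma, and the verification in the approximation step that the Hurwitz limit does not drop degree (which is why the hypothesis that $f$ has no nonnegative zeros is essential). As an alternative route one could bypass the elementary lemma altogether by invoking the P\'olya--Schur characterization of multiplier sequences, for which $\gamma_k=f(k)$ qualifies precisely because the associated generating function lies in the Laguerre--P\'olya class with zeros of one sign.
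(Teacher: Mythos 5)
The paper contains no proof of Theorem~\ref{theo:laguerre_polys} to compare you against: it is quoted from the literature, with pointers to \cite[Section~5.7 and Theorem~5.6.12]{rahman_schmeisser_book_polys} and \cite{warner_book_zeros}. Judged on its own merits, your proposal is correct, and it is essentially the classical argument underlying those references: factor the multiplier operator as $T_f=C\prod_i(xD-\beta_i)$ using $(xD-\beta)x^k=(k-\beta)x^k$, prove the one-factor lemma, and pass to the Laguerre--P\'olya limit via real-rooted polynomial approximants of the Hadamard factors in \eqref{eq:polya_laguerre_def} and Hurwitz. I checked the two load-bearing steps. The multiplicity bookkeeping in the lemma is sound: at a nonzero root $r$ of multiplicity $\mu$ one has $xp'+bp=(x-r)^{\mu-1}\bigl[\mu x s\bigr]+(x-r)^{\mu}\bigl[xs'+bs\bigr]$ with $\mu r s(r)\neq 0$, and at the origin $xp'+bp=x^{\mu_0}\bigl((\mu_0+b)u+xu'\bigr)$ with $(\mu_0+b)u(0)\neq 0$ for $b>0$; together with the $d$ Rolle points of $x^b p$ and $(-x)^b p$ (each interior critical point satisfies $x^{b-1}(xp'+bp)=0$) this accounts for all $m=\deg p$ zeros, and the leading coefficient $(m+b)a_m\neq 0$ shows none escape to infinity. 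The reversal identity $R(xD-\beta)R^{-1}=-(xD-(n-\beta))$ with $R[p](x)=x^np(1/x)$ is also correct, and your extension of the lemma to $\beta\geq n$ is not a luxury but a necessity: the approximants $(1+cz/N)^N$ with $c<0$ and $(1-\tfrac{\sigma^2}{2M}z^2)^M$ acquire large \emph{positive} zeros, so a lemma restricted to negative $\beta$ would not suffice -- you anticipated exactly the point where a naive version of this proof fails.

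Three small repairs for a complete write-up. First, your claim that $xp'+bp$ vanishes ``to order $\mu_0$ at $x=0$'' fails in the edge case $b=0$, $\mu_0=0$ (then $q=xp'$ vanishes to order $\geq 1$ there); but $b=0$ should simply be dispatched separately, since $xp'$ is real-rooted whenever $p$ is, by Rolle applied to $p$ alone. Second, the Hurwitz step should use the leading coefficient $f(m)c_m$ with $m=\deg p$, since the theorem permits $c_n=0$; this is harmless because $f$ has only negative zeros, so $f(m)\neq 0$. Third, you do not need locally uniform convergence of $f_N$ to $f$ anywhere: only $f_N(k)\to f(k)$ for the finitely many integers $k\in\{0,\dots,n\}$ is used, which gives coefficientwise (hence locally uniform) convergence of the transformed polynomials of bounded degree, and then Hurwitz (or closedness of the real-rooted polynomials of bounded degree under coefficientwise limits with nonvanishing limiting leading coefficient) applies. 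Your suggested shortcut via the P\'olya--Schur characterization of multiplier sequences is also legitimate, though it outsources the entire content of the theorem to a deeper result.
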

\begin{remark}\label{rem:laguerre_polys_special_case}
Since all zeros of the  polynomial $p(x) = (x-1)^{n} = \sum_{k=0}^n  (-1)^{n-k} \binom {n}{k} x^k$ are real, Laguerre's theorem implies that  $\sum_{k=0}^n (-1)^{n-k} f(k) \binom nk x^k$ is real-rooted.
\end{remark}

\begin{proof}[Proof of Theorem~\ref{the0:free_mult_hermite}]
The function $f(z) = \eee^{\frac 12 {\sigma^2 z(n-z)}}$ belongs to the Laguerre--P\'olya class by~\eqref{eq:polya_laguerre_def}. Remark~\ref{rem:laguerre_polys_special_case} implies that all zeros of $G_n(x; \sigma^2)= \sum_{k=0}^n (-1)^{n-k} f(k) \binom nk x^k$ are real. Since the coefficients of $G_n(x;\sigma^2)$ alternate signs and $G_n(0; \sigma^2) \neq 0$, all roots are in fact positive.

To identify the weak limit of $\lsem G_n(x; \sigma^2/n)\rsem_n$, we write $G_n(x; \sigma^2/n)  =\sum_{k=0}^n (-1)^{n-k} \binom {n}{k} b_{k:n}^n x^k$  with $b_{k:n} = \eee^{\frac {\sigma^2} 2 \frac kn (1-\frac kn)}$ and observe that $b_{\lfloor \alpha n\rfloor:n} \to \eee^{\frac {\sigma^2} 2 \alpha (1-\alpha)}$ for every $\alpha\in (0,1)$. We can therefore apply Proposition~\ref{prop:general_conv_to_free_mult_ID_law} with $\tilde g(\alpha) = \frac {\sigma^2} 2 \alpha (1-\alpha)$ and $\tilde g'(\alpha) = \sigma^2 (\frac 12- \alpha)$. It follows that $\lsem G_n(x; \sigma^2/n)\rsem_n$ converges weakly to a probability measure $\nu = \nu_{\sigma^2}$ with
$$
S_\nu(t) = \eee^{\tilde g'(1+t)} =  \eee^{-\sigma^2 (t + \frac 12)},
\qquad
\Sigma_{\nu}(z) = \eee^{\tilde g'\left(\frac 1 {1-z}\right)} = \eee^{\sigma^2 \left(\frac 12 - \frac 1 {1-z}\right)} = \eee^{\frac{\sigma^2}{2} \, \frac{z+1}{z-1}},
\qquad
$$
for all $t\in (-1,0)$ and $z\in (-\infty, 0)$.
\end{proof}

\begin{remark}
The polynomials $G_n(x; \sigma^2)$ are similar to unitary Hermite polynomials~\cite{kabluchko2024leeyangzeroescurieweissferromagnet,mirabelli2021hermitian} defined by
$$
H_n(x; \sigma^2) := \sum_{k=0}^{n}(-1)^{n-k}\binom{n}{k}\eee^{-\frac{\sigma^2 k(n-k)}{2}}x^k,
$$
with the only difference being the sign in the exponent. It is known that all zeros of $H_n(x; \sigma^2)$ lie on the unit circle, see Lemma 2.1 in~\cite{kabluchko2024leeyangzeroescurieweissferromagnet}. The empirical distribution of zeros of $H_n(x; \sigma^2/n)$ converges, as $n\to\infty$, to the free \emph{unitary} normal distribution on the unit circle with parameter $\sigma^2$, see Theorem~2.2 in~\cite{kabluchko2024leeyangzeroescurieweissferromagnet}.
\end{remark}

\subsubsection{Polynomials related to free multiplicative Poisson distributions}
For $n\in\mathbb{N}$ and parameters $b \in \R \backslash(-n,0)$, $c\geq 0$ we consider the polynomial
$$
P_{n}(x; b, c) = \sum_{k=0}^n  (-1)^{n-k} \binom {n}{k} |k+b|^{c} x^k.
$$

\begin{lemma}\label{lem:poly_poisson_multipl_free_real_rooted}
Let $c\in \N_0$ and either $b\geq 0$ or $b \leq  -n$. Then, all roots of $P_{n}(x; b, c)$ are real.
\end{lemma}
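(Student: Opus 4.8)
The plan is to reduce every case to Laguerre's theorem in the form of Remark~\ref{rem:laguerre_polys_special_case}, which guarantees that $\sum_{k=0}^n (-1)^{n-k} f(k) \binom nk x^k$ is real-rooted whenever $f$ lies in the Laguerre--P\'olya class and has only negative zeros. The case $c=0$ is trivial, since $|k+b|^0=1$ gives $P_n(x;b,0)=(x-1)^n$; so I would assume $c\ge 1$ throughout.

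First suppose $b>0$. On the integers $k\in\{0,\ldots,n\}$ we have $k+b>0$, hence $|k+b|^c=(k+b)^c=f(k)$ for the polynomial $f(z)=(z+b)^c$. This $f$ has its only zero at $z=-b<0$, so it belongs to the Laguerre--P\'olya class and has only negative zeros. Remark~\ref{rem:laguerre_polys_special_case} then applies verbatim to $P_n(x;b,c)=\sum_{k=0}^n (-1)^{n-k} f(k)\binom nk x^k$, giving real-rootedness. The boundary value $b=0$ I would settle by continuity: as $b\downarrow 0$ the coefficients of $P_n(x;b,c)$ converge to those of $P_n(x;0,c)$, while the leading coefficient $|n+b|^c\to n^c\neq 0$ stays away from zero, and a coefficientwise limit of real-rooted polynomials of fixed degree is real-rooted by Hurwitz's theorem.

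It remains to treat $b\le -n$, which I would handle by a reflection that sends this range to the range $b\ge 0$ already covered. Reversing the coefficients of $P_n$ and reindexing $k\mapsto n-k$ yields the exact polynomial identity
\begin{equation*}
x^n P_n\!\left(\tfrac1x;\, b, c\right) = (-1)^n P_n\!\left(x;\, -(n+b),\, c\right),
\end{equation*}
where one uses $|n+b-k|=|k+b'|$ with $b':=-(n+b)$ together with $(-1)^{k}=(-1)^n(-1)^{n-k}$. When $b\le -n$ we have $b'\ge 0$, so the right-hand side is real-rooted by the first part. Since the reversal $Q(x)\mapsto x^nQ(1/x)$ on polynomials of degree at most $n$ is an involution that preserves real-rootedness -- a nonzero root $r$ maps to $1/r$, and any degree deficiency contributes real roots at the origin -- applying it once more to the (real-rooted) right-hand side recovers $P_n(x;b,c)$, which is therefore real-rooted as well.

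The only genuine idea here is the reflection symmetry $b\mapsto -(n+b)$ that identifies the two admissible parameter regimes; everything else is an invocation of Theorem~\ref{theo:laguerre_polys}. The main technical care is needed at the degenerate values $b=0$ and $b=-n$, where for $c\ge 1$ a coefficient vanishes and a root sits at the origin (respectively at infinity); these are absorbed by the Hurwitz continuity argument and by the bookkeeping of roots under reversal, and I expect them to be the most error-prone rather than conceptually hard part of the proof.
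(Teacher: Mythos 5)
Your proof is correct, and for $b\geq 0$ it follows the same path as the paper: both invoke Laguerre's theorem (Theorem~\ref{theo:laguerre_polys} via Remark~\ref{rem:laguerre_polys_special_case}) with $f(z)=(z+b)^c$. The genuine difference is in the case $b\leq -n$. The paper applies Laguerre's theorem a second time, directly, to $f(z)=(-b-z)^c$, asserting that this polynomial has nonpositive roots; in fact its only root is at $z=-b\geq n$, which is \emph{positive}, so what is really being used there is the stronger classical form of Laguerre's theorem in which the zeros of $f$ need only avoid the interval $(0,n)$ -- a form contained in the cited sources but not in Theorem~\ref{theo:laguerre_polys} as stated, which demands negative zeros. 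Your reversal identity $x^n P_n(1/x;b,c)=(-1)^n P_n(x;-(n+b),c)$ -- which checks out, including the sign bookkeeping -- sidesteps this issue entirely: it reduces $b\leq -n$ to $b'=-(n+b)\geq 0$, so only the ``negative zeros'' version of the theorem is ever needed, together with the elementary fact that coefficient reversal $Q(x)\mapsto x^nQ(1/x)$ is a real-rootedness-preserving involution; relative to the tools as literally stated in the paper, your route is the more self-contained one. Your Hurwitz treatment of the boundary values $b=0$ and $b=-n$ (where $f(z)=z^c$ has its zero at an endpoint of $\{0,\dots,n\}$, so the negative-zeros hypothesis fails verbatim) addresses a point the paper passes over silently -- it writes ``nonpositive roots'' while its stated theorem asks for negative ones; your continuity argument closes that small gap cleanly, and alternatively one can note $P_n(x;0,c)=(xD)^c(x-1)^n$ and that $xD$ preserves real-rootedness. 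Finally, be aware the paper also gives an entirely different second proof in Remark~\ref{rem:mult_laguerre_motivation}, via expected characteristic polynomials and the finite free multiplicative convolution $\boxtimes_n$; your argument is distinct from that one as well.
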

\begin{proof}
We apply Theorem~\ref{theo:laguerre_polys} and Remark~\ref{rem:laguerre_polys_special_case}.
Let first $b\geq 0$. Then, for all $k\in \{0,\ldots, n\}$ we have  $|k+b|^{c} = (k+b)^c$. Since $f(z) := (z+b)^c$ is a polynomial with nonpositive roots (in particular, $f(z)$ belongs to the Laguerre--P\'olya class), Laguerre's theorem applies and proves that $P_{n}(x; b,c)$ is real-rooted.  Let now $b\leq -n$. Then, for all $k\in \{0,\ldots, n\}$ we have $|k+b|^{c} = (-b-k)^c$. Since $f(z) := (-b-z)^c$ is a polynomial with nonpositive roots, Laguerre's theorem implies that $P_{n}(x; b,c)$ is real-rooted. A different proof of the lemma will be given in Remark~\ref{rem:mult_laguerre_motivation}.
\end{proof}

\begin{remark}
The assumption that $c$ is integer is essential. For example, the polynomial $P_{3}(x; 0,0.5)$ is not real-rooted. On the other hand, numerical simulations support the following conjecture. The polynomial $P_{n}(x; b, c)$ is real-rooted for every $n\in \N$, $b\in \R \bsl (-n,0)$ and every real $c\geq n$.
\end{remark}

\begin{remark}\label{rem:free_poisson_polys_semigroup}
The polynomials $P_{n}(x; b,c)$, $c\geq 0$, form a semigroup with respect to $\boxtimes_n$ in the sense that $P_{n}(x; b,c') \boxtimes_n P_{n}(x; b,c'') = P_{n}(x; b,c'+c'')$ for all $c', c''\geq 0$.
\end{remark}

\begin{theorem}\label{theo:poly_free_mult_poisson_empirical}
Fix $\beta \in \R\backslash(-1,0)$ and $\gamma > 0$. Let $(b_n)_{n\in \N}\subseteq \R$ and $(c_n)_{n\in \N}\subseteq \N_0$ be sequences such that $b_n/n\to \beta$ and $c_n/n \to \gamma$ as $n\to\infty$. Let also $b_n\in \R \backslash (-n,0)$ for every $n\in \N$. Then, $\lsem P_{n}(x; b_n, c_n)\rsem_n$ converges weakly to a probability measure $\nu = \nu_{\beta, \gamma}$ on $[0,\infty)$ with
$$
S_{\nu}(t) = \eee^{\frac{\gamma}{t + \beta+1}},
\quad
t\in (-1,0),
\qquad
\Sigma_{\nu}(z) = \eee^{\frac{\gamma (1-z)}{1 + \beta (1-z)}},
\quad
z\in (-\infty, 0).
$$
\end{theorem}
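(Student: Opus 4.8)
The plan is to deduce the statement from the general scheme behind Proposition~\ref{prop:general_conv_to_free_mult_ID_law}, the one new ingredient being a normalization that turns the diverging coefficients into a finite exponential profile. First I would record that for each $n$ the polynomial $P_{n}(x;b_n,c_n)$ has only nonnegative real roots: real-rootedness is exactly Lemma~\ref{lem:poly_poisson_multipl_free_real_rooted}, applicable since $c_n\in\N_0$ and $b_n\in\R\bsl(-n,0)$, while the alternating signs of the coefficients $(-1)^{n-k}\binom nk|k+b_n|^{c_n}$ rule out negative roots (substituting $x=-y$ produces a polynomial with nonnegative coefficients). Consequently $\lsem P_{n}(x;b_n,c_n)\rsem_n$ is the reflection across the origin of $\lsem \widehat Q_n\rsem_n$, where
$$\widehat Q_n(x):=\frac{(-1)^n}{n^{c_n}}P_{n}(-x;b_n,c_n)=\sum_{k=0}^n\binom nk\Big(\frac{|k+b_n|}{n}\Big)^{c_n}x^k$$
has nonnegative coefficients and nonpositive roots. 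The scalar factor $n^{-c_n}$ leaves the zeros unchanged but is essential: without it the coefficients grow like $n^{c_n}$ and no finite profile exists.

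Next I would compute the exponential profile of $\widehat Q_n$. Writing its coefficients as $\binom nk b_{k:n}^{\,n}$ with $b_{k:n}=(|k+b_n|/n)^{c_n/n}$, the assumptions $b_n/n\to\beta$ and $c_n/n\to\gamma$ give, for each fixed $\alpha\in(0,1)$,
$$b_{\lfloor\alpha n\rfloor:n}=\Big(\tfrac{|\lfloor\alpha n\rfloor+b_n|}{n}\Big)^{c_n/n}\ton|\alpha+\beta|^{\gamma}=\eee^{\tilde g(\alpha)},\qquad \tilde g(\alpha):=\gamma\log|\alpha+\beta|.$$
Together with Stirling's formula for $\binom{n}{\lfloor\alpha n\rfloor}$ this yields the profile $g(\alpha)=\tilde g(\alpha)-\alpha\log\alpha-(1-\alpha)\log(1-\alpha)$ on $(0,1)$. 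The hypothesis $\beta\notin(-1,0)$ guarantees $\alpha+\beta\neq0$ for all $\alpha\in(0,1)$, so $\tilde g$ is smooth there; after differentiation the two cases $\beta\geq0$ and $\beta\leq-1$ merge, since $\tfrac{\dd}{\dd\alpha}\log|\alpha+\beta|=(\alpha+\beta)^{-1}$ in both.

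I would then apply Theorem~\ref{theo:exp_profile_implies_zeros} to $\widehat Q_n$: because $g$ is finite on the whole of $(0,1)$ we get $(\underline m,\overline m)=(0,1)$, weak convergence $\lsem\widehat Q_n\rsem_n\toweak\mu$ on $[-\infty,0]$, and $\mu(\{0\})=\mu(\{-\infty\})=0$. Reflecting across $0$ gives $\lsem P_{n}(x;b_n,c_n)\rsem_n\toweak\nu:=\mu_+$, a genuine probability measure on $[0,\infty)$. To identify $\nu$ I would invoke the $S$-transform identity~\eqref{eq:S_mu-via-profiles}, exactly as in the proof of Proposition~\ref{prop:general_conv_to_free_mult_ID_law}: since $g'(\alpha)=\tilde g'(\alpha)+\log\frac{1-\alpha}{\alpha}$, a one-line simplification gives
$$S_\nu(t)=-\frac{1+t}{t}\eee^{g'(1+t)}=\eee^{\tilde g'(1+t)}=\eee^{\gamma/(t+\beta+1)},\qquad t\in(-1,0).$$
Substituting $t=z/(1-z)$ and using $1+\frac{z}{1-z}=\frac1{1-z}$ then produces $\Sigma_\nu(z)=S_\nu\big(\frac{z}{1-z}\big)=\eee^{\tilde g'(1/(1-z))}=\eee^{\gamma(1-z)/(1+\beta(1-z))}$ for $z\in(-\infty,0)$, the claimed formulas.

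The hard part is conceptual rather than computational: recognizing that the raw profile of $(-1)^nP_{n}(-x;b_n,c_n)$ diverges to $+\infty$ by the $\alpha$-independent amount $\tfrac{c_n}{n}\log n$, and that dividing by $n^{c_n}$ removes precisely this divergence while preserving the zeros. Once this normalization is in place and nonnegative real-rootedness is secured by Lemma~\ref{lem:poly_poisson_multipl_free_real_rooted}, everything reduces to a routine instance of the exponential-profile calculus, with the boundary condition $\beta\notin(-1,0)$ ensuring that $g$, and hence $S_\nu$ and $\Sigma_\nu$, remain finite throughout $(0,1)$ (equivalently on $(-1,0)$).
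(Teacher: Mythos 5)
Your proposal is correct and takes essentially the same route as the paper: the paper's proof likewise normalizes to $n^{-c_n}P_n(x;b_n,c_n)=\sum_{k=0}^n(-1)^{n-k}\binom{n}{k}b_{k:n}^n x^k$ with $b_{k:n}=|(k+b_n)/n|^{c_n/n}$, notes $b_{\lfloor\alpha n\rfloor:n}\to|\alpha+\beta|^\gamma$, and invokes Proposition~\ref{prop:general_conv_to_free_mult_ID_law} with $\tilde g(\alpha)=\gamma\log|\alpha+\beta|$, $\tilde g'(\alpha)=\gamma/(\alpha+\beta)$ --- precisely the argument (exponential profile, Theorem~\ref{theo:exp_profile_implies_zeros}, and the $S$-transform identity~\eqref{eq:S_mu-via-profiles}) that you have unfolded inline. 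Your explicit use of Lemma~\ref{lem:poly_poisson_multipl_free_real_rooted} together with the sign pattern to place the roots on $[0,\infty)$ matches the ingredient the paper relies on as well, only stated directly rather than routed through the Proposition's hypothesis.
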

\begin{proof}
Write $n^{-c_n} P_n(x; b_n, c_n) = \sum_{k=0}^n (-1)^{n-k} \binom nk b_{k:n}^n x^k$ with $b_{k:n} = |(k+b_n)/n|^{c_n/n}$. Note that $b_{\lfloor \alpha n\rfloor:n} \to |\alpha + \beta|^\gamma$ as $n\to\infty$, for all $\alpha \in (0,1)$. We can apply Proposition~\ref{prop:general_conv_to_free_mult_ID_law} with $\tilde g(\alpha) = \gamma \log |\alpha + \beta|$. If $\beta \geq 0$, then $\tilde g(\alpha) = \gamma \log (\alpha + \beta)$, whereas  for $\beta \leq -1$ we have $\tilde g(\alpha) = \gamma \log (-\alpha - \beta)$. In both cases, $\tilde g'(\alpha) = \gamma/(\alpha+\beta)$.  It follows from Proposition~\ref{prop:general_conv_to_free_mult_ID_law} that $\lsem P_n(x; b_n,c_n)\rsem_n$ converges weakly to a probability measure $\nu = \nu_{\beta, \gamma}$ with
$$
S_{\nu}(t) = \eee^{\tilde g'(1+t)} = \eee^{\frac{\gamma}{t + \beta+1}} ,
\qquad
\Sigma_{\nu}(z) = \eee^{\tilde g'\left(\frac 1 {1-z}\right)} = \eee^{\frac {\gamma}{\frac 1 {1-z} + \beta}} = \eee^{\frac{\gamma (1-z)}{1 + \beta (1-z)}},
\qquad
$$
for all $t\in (-1,0)$ and $z\in (-\infty, 0)$.
\end{proof}

The distribution $\nu_{\beta, \gamma}$ appeared in~\cite[Lemma 7.2]{bercovici_voiculescu_levy_hincin}.  It is the free mutiplicative analogue of the Poisson distribution. If $\beta\neq 0$, then it is $\boxtimes$-infinitely divisible with $\rho= \frac{\gamma}{\beta}\frac{t_0-1}{t_0^2+1}\delta_{t_0}$ and $c_0 = \frac{\gamma}{\beta}\frac{t_0+1}{t_0^2+1}$ in~\eqref{eq:free_mult_levy_khinchine}, where $t_0=\beta^{-1}+1$. If $\beta=0$, it is $\boxtimes$-infinitely divisible as well with $c_0=\gamma$ and $\rho=\gamma\delta_{+\infty}$.

\begin{remark}\label{rem:mult_laguerre_motivation}
The polynomials $P_n(x; b,c)$ can be considered as a finite free \emph{multiplicative} analogue of the Poisson distribution in the same way as the Laguerre polynomials are the finite free \emph{additive} analogue of the Poisson distribution~\cite{marcus2021polynomial}. Let us provide motivation for this.
Let $U_n$ be a random vector with uniform distribution on the unit sphere in $\R^n$. Consider the linear operator ${\bf A}_n: \R^n \to \R^n$ given by ${\bf A}_n:=\mathbf{Id}_n+(\mu-1)U_nU_n^T$, which satisfies ${\bf A}_n v = v$ for $v\in U_n^\perp$ and ${\bf A}_n U_n = \mu U_n$, where $\mu\geq 0$, $\mu \neq 1$, and $U_n^\perp$ denotes the orthogonal complement of the one-dimensional subspace spanned by $U_n$.  The characteristic polynomial of ${\bf A}_n$ is deterministic and given by
\begin{align*}
\det ({\bf A}_{n}-\mathbf{Id}_n x)
&=
(1-x)^{n-1} (\mu-x) = \sum_{\ell\in \Z} (-1)^{\ell+1} \binom {n-1}{\ell} x^\ell (x-\mu)\\
& =  \sum_{k\in \Z}(-1)^{k} x^k \left(\binom{n-1}{k-1} + \mu \binom{n-1}{k}\right)=
\sum_{k\in \Z}(-1)^{k} x^k \binom{n}{k} \left( \mu + (1-\mu) \frac kn\right)\\
&=\frac{1-\mu}{n} (-1)^n ( \sgn b ) P_n(x; b, 1), \quad b:= \frac{\mu n}{1-\mu}  \in \R \bsl [-n,0).
\end{align*}
It follows that $P_n(x; b, 1)$ is real-rooted, which by Remark~\ref{rem:free_poisson_polys_semigroup} and Szeg\H{o}'s theorem~\cite[Theorem~1.6]{marcus_spielman_srivastava} implies that $P_n(x; b, c)$ is real-rooted for every $c\in \N$. This gives another proof of Lemma~\ref{lem:poly_poisson_multipl_free_real_rooted} (assuming $b\neq -n$).

Note that the normalized spectral measure of ${\bf A}_n$ is $\frac{n-1}{n} \delta_1 + \frac 1n \delta_\mu$. The classical Poisson limit theorem implies that the classical $\lfloor \lambda n\rfloor$-th convolution power of $\frac{n-1}{n} \delta_0 + \frac 1n \delta_{m}$ converges, as $n\to\infty$,  to $m \cdot \xi$, where the random variable $\xi$ has Poisson distribution with parameter $\lambda>0$ and $m\neq 0$. For a finite free multiplicative  analogue of this result, let ${\bf A}_{1;n},\ldots, {\bf A}_{c; n}$ be $c$ independent realizations of ${\bf A}_n$, where $c\in \N$, and note that their distribution is invariant under conjugation with a Haar-unitary matrix. By~\cite[Theorem~1.5]{marcus_spielman_srivastava}, the expected characteristic polynomial of the product ${\bf A}_{1;n}\ldots {\bf A}_{c;n}$ is
\begin{align*}
\E \det ({\bf A}_{1;n}\ldots {\bf A}_{c;n}-\mathbf{Id}_n x)
&=(-1)^n\E \det (\mathbf{Id}_n x-{\bf A}_{1;n}\ldots {\bf A}_{c;n})\\
&=(-1)^n
\underbrace{(\E \det (\mathbf{Id}_n x -{\bf A}_{n})\boxtimes_n\cdots\boxtimes_n (\E \det (\mathbf{Id}_n x-{\bf A}_{n}))}_{c\text{ times}}\\
&=
(-1)^{n}\left(\frac{1-\mu}{n} \sgn b\right)^c  \underbrace{P_n(x; b, 1) \boxtimes_n\cdots\boxtimes_n P_n(x;b,1)}_{c\text{ times}}\\
&= (-1)^{n}\left(\frac{1-\mu}{n}\sgn b \right)^c  P_n(x; b,c),
\end{align*}
where the last step follows from the semigroup property stated in Remark~\ref{rem:free_poisson_polys_semigroup}.  This explains why it is natural to expect the free multiplicative Poisson distribution  to appear as  a  limit in Theorem~\ref{theo:poly_free_mult_poisson_empirical}.
Another approach to a free multiplicative analogue of the Poisson limit theorem can be found in~\cite[Section~10.5]{arizmendi2024s}.   For polynomials that are similar to $P_n(x; b,c)$ but have zeros on the unit circle, see~\cite{kabluchko2024repeateddifferentiationfreeunitary}. Sidi polynomials studied in~\cite{lubinski_sidi_strong_asympt,lubinski_sidi_composite_biorthogonal,sidi_numerical_quad_nonlinear_seq,sidi_num_quad_infinite_range,sidi_lubinski_on_the_zeros} are also closely related to $P_n(x; b,c)$.
\end{remark}

\subsection{Special polynomials generated by differential operators of hypergeometric type}
In this and the next subsection, we explore two distinct generalizations of the formula ${\rm He}_n(x)=\eee^{-\frac{1}{2}D^2}x^n$ each yielding real-rooted polynomials that can be expressed as differential flows of monomials. First, we turn to differential operators of the form $\eee^{-cnH}$ for some $c>0$ and a word $H=\prod_s B_s$ of operators $B_s$, each of which either decreases the degree of a polynomial by one or preserves the degree. More precisely, fix a parameter $m\in\N$ describing the degree decrease of $H$. For $j+1\ge m$, $b_1,\dots,b_j\in [1,+\infty)$, $b_0=1$ and define a collection of operators
\begin{align*}
	B_s&:=\frac{D}{nm} +\frac {b_s-1}x,\quad 0\leq s<m,\\
	B_s&:=\frac{xD}{nm} + {b_s-1},\quad m\leq s\leq j.
\end{align*}
Thus, the operators $B_0=\frac{D}{nm},B_1,\ldots,B_{m-1}$ are degree decreasing and the operators $B_m,\ldots,B_{j}$ are degree preserving. The following formula holds true
\begin{equation}\label{eq:B_s_action}
	B_s x^{\ell}=\left(\frac{\ell}{nm}+b_s-1\right)x^{\ell-1+1_{\{s\geq m\}}},\quad s=0,\ldots,j,\quad \ell\geq 1.
\end{equation}
A somewhat unusual choice of scaling will become apparent from the result below. As we shall see, the order in which we apply the operators $B_s$, $s=0,1,\ldots,j$, is irrelevant in the limit, provided (for technical reasons) only that the first operator is the differentiation $B_0=D/{nm}$. Hence, we fix a permutation $\sigma$ of $\{1,2,\ldots,j\}$ and set
\begin{align}\label{eq:Hamiltonian}
	H=B_{\sigma(j)}\dots B_{\sigma(1)}B_0.
\end{align}
Using the notation $t_i:=\sum_{k=1}^{i}1_{\{\sigma(k)\geq m\}}$, $i=0,\ldots,m$, formula~\eqref{eq:B_s_action} implies
\begin{equation}\label{eq:H_action}
	Hx^{\ell}=\frac{\ell}{nm}\prod_{s=1}^{j}\left(\frac{\ell-1+t_{s-1}}{nm}+b_{\sigma(s)}-1\right)x^{\ell-m},\quad \ell\geq m.
\end{equation}
We shall show below that  under the stated notation and assumptions, for all sufficiently large $n$, we have
\begin{align}\label{eq:hamilton-flow}
	\eee^{-cnH}x^{nm}=(-c)^n\HyperP{{\emptyset}}{{\bf b^{(n)}}}{n}(-x^m/c)
\end{align}
for some appropriate $\mathbf b^{(n)}\in[1-1/n,+\infty)^{j}$ such that $\mathbf b^{(n)}\to (b_1,\dots,b_j)=:{\bf b}$, as $n\to\infty$. By Corollary~\ref{cor:real_zeros_hypergeo_polys_2} the roots of $\HyperP{{\emptyset}}{{\bf b^{(n)}}}{n}$ are nonpositive, hence the zeros of $\eee^{-cnH}x^{nm}$ are concentrated on the rays $\{r\eee^{2\pi i/m}:r\ge 0\}$ through the $m$th roots of unity. The function $S(x)=-x^{m}/c$ maps the union of these rays to the closed negative half-line. Thus,~\eqref{eq:hamilton-flow} implies
$$
\lsem \eee^{-cnH}x^{nm}\rsem_{nm}\circ S^{-1}=\lsem \HyperP{{\emptyset}}{{\bf b^{(n)}}}{n}(x)\rsem_n.
$$
Corollary~\ref{cor:real_zeros_hypergeo_polys_2} yields the following result.
\begin{theorem}\label{thm:hamilton-flow}
	Under the stated notation and assumptions, we have the convergence of the push-forwards measures
	\begin{align}
		\lsem \eee^{-cnH}x^{nm}\rsem_{nm}\circ S^{-1}\toweak \mu,
	\end{align}
	where $\mu$ is a probability measure on $(-\infty,0]$ with the Cauchy transform $G^{\varnothing,{\bf b}}$ such that the inverse of $t\mapsto tG^{\varnothing,{\bf b}}(t)$ is $\alpha\mapsto\frac {\alpha} {1-\alpha}\prod_{s=1}^j( b_s-1+\alpha)$.
\end{theorem}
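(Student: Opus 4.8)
\section*{Proof proposal}

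The plan is to reduce the whole statement to the hypergeometric polynomials of Corollary~\ref{cor:real_zeros_hypergeo_polys_2} by proving the operator identity~\eqref{eq:hamilton-flow}. Once~\eqref{eq:hamilton-flow} is established, the nonpositivity of the roots, the ray structure of the zeros of $\eee^{-cnH}x^{nm}$, the equality of push-forward measures $\lsem\eee^{-cnH}x^{nm}\rsem_{nm}\circ S^{-1}=\lsem\HyperP{\emptyset}{\mathbf b^{(n)}}{n}\rsem_n$, and the form of the limiting Cauchy transform all follow from the discussion preceding the theorem together with Corollary~\ref{cor:real_zeros_hypergeo_polys_2} and Theorem~\ref{theo:hypergeo_polys_zeros}. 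So the real content is~\eqref{eq:hamilton-flow}.

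To prove~\eqref{eq:hamilton-flow}, I would first note that since $H$ lowers the degree by exactly $m$, the exponential series terminates, $\eee^{-cnH}=\sum_{p=0}^{n}\frac{(-cn)^p}{p!}H^p$, and each $H^p x^{nm}$ is a scalar multiple of $x^{m(n-p)}$; hence $\eee^{-cnH}x^{nm}$ is supported on the powers $x^{m(n-p)}$ and is a polynomial of degree $n$ in $y:=x^m$. Iterating~\eqref{eq:H_action} along the chain $x^{nm}\mapsto x^{m(n-1)}\mapsto\cdots\mapsto x^{m(n-p)}$ gives
\begin{equation*}
H^p x^{nm}=\left(\prod_{q=0}^{p-1}\frac{n-q}{n}\prod_{s=1}^{j}\Big(b_{\sigma(s)}-\frac qn+\frac{t_{s-1}-1}{nm}\Big)\right)x^{m(n-p)}.
\end{equation*}
For each fixed $s$, the inner factors, as $q$ runs over $0,\dots,p-1$, form a falling factorial in steps of $1/n$ started at $b_{\sigma(s)}+\frac{t_{s-1}-1}{nm}$. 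Collecting these, substituting $k=n-p$, and comparing with the explicit coefficient of $x^k$ in $\HyperP{\emptyset}{\mathbf b^{(n)}}{n}$ from Section~\ref{sec:classical_ensembles} (the case $i=0$, where the coefficient carries the factor $n^{-j(n-k)}(\mathbf b^{(n)}n)^{\underline{n-k}}$), the identity~\eqref{eq:hamilton-flow} holds upon setting
\begin{equation*}
b_{\sigma(s)}^{(n)}:=b_{\sigma(s)}+\frac{t_{s-1}-1}{nm},\qquad s=1,\dots,j.
\end{equation*}

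The next step is to check that this $\mathbf b^{(n)}$ satisfies the hypotheses of Corollary~\ref{cor:real_zeros_hypergeo_polys_2}. Since $b_s\ge 1$ and $t_{s-1}\ge 0$, one has $b_{\sigma(s)}^{(n)}\ge 1-\frac{1}{nm}\ge 1-\frac1n$, while clearly $b_{\sigma(s)}^{(n)}\to b_s\ge 1$ as $n\to\infty$. Thus $\mathbf b^{(n)}\in[1-1/n,+\infty)^{j}$ converges to $\mathbf b$, which is exactly the setting of Corollary~\ref{cor:real_zeros_hypergeo_polys_2}. That corollary then gives that $\HyperP{\emptyset}{\mathbf b^{(n)}}{n}$ has nonpositive roots and that $\lsem\HyperP{\emptyset}{\mathbf b^{(n)}}{n}\rsem_n$ converges weakly to the measure with Cauchy transform $G^{\varnothing,\mathbf b}$; by Theorem~\ref{theo:hypergeo_polys_zeros} with $i=0$ the inverse of $t\mapsto tG^{\varnothing,\mathbf b}(t)$ is $\alpha\mapsto\frac{\alpha}{1-\alpha}\prod_{s=1}^{j}(b_s-1+\alpha)$, as claimed.

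Finally, I would transport the convergence back through $S$. Nonpositivity of the roots of $\HyperP{\emptyset}{\mathbf b^{(n)}}{n}$ together with~\eqref{eq:hamilton-flow} forces the zeros of $\eee^{-cnH}x^{nm}$ onto the $m$ rays through the $m$th roots of unity, and $S(x)=-x^m/c$ identifies these rays with $[-\infty,0]$ at the level of empirical measures, giving the equality recorded just before the theorem; passing to the weak limit then yields the assertion. The main obstacle is the bookkeeping behind~\eqref{eq:hamilton-flow}: one must track the running degree correctly through the word $H=B_{\sigma(j)}\cdots B_{\sigma(1)}B_0$ (this is where the counters $t_{s-1}$ and the scaling by $nm$ enter) and verify the borderline lower bound $b_{\sigma(s)}^{(n)}\ge 1-1/n$, since it is precisely this bound that makes the strengthened Corollary~\ref{cor:real_zeros_hypergeo_polys_2} applicable rather than the plain Corollary~\ref{cor:real_zeros_hypergeo_polys}.
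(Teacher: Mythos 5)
Your proposal is correct and takes essentially the same route as the paper's proof: it reduces the theorem to the identity \eqref{eq:hamilton-flow}, iterates \eqref{eq:H_action} to compute $H^p x^{nm}$ as a product of falling-factorial factors in steps of $1/n$, and your choice $b_{\sigma(s)}^{(n)}=b_{\sigma(s)}+\frac{t_{s-1}-1}{nm}$ is exactly the paper's \eqref{eq:b_n_choice} after reindexing by $\sigma$. Your verification of the borderline bound $b_{\sigma(s)}^{(n)}\ge 1-\frac{1}{nm}\ge 1-\frac 1n$ and the appeal to Corollary~\ref{cor:real_zeros_hypergeo_polys_2} likewise mirror the paper's concluding step.
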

\begin{proof}
	We only need to check~\eqref{eq:hamilton-flow}. The hypergeometric polynomial on the right hand side can be rewritten as
	\begin{align}\label{eq:hypergeo-poly-hamilton}
		(-c)^n\HyperP{\emptyset}{\mathbf b^{(n)}}{n}(-x^m/c)&= \sum_{k=0}^n\binom n k n^{-j(n-k)} {(\mathbf b^{(n)} n)^{\underline{n-k}}} (-c)^{n-k}	x^{mk}\nonumber\\
		&=\sum_{k=0}^n \frac {(-c)^kn^k} {k!} \prod_{s=0}^j \frac{\Gamma(b_s^{(n)}n+1)}{n^k\Gamma(b_s^{(n)}n-k+1)}	x^{m(n-k)}.
	\end{align}
	On the other hand
	$$
	\eee^{-cnH}x^{nm}=\sum_{k=0}^{\infty}\frac {(-c)^kn^k} {k!} H^k x^{nm}=\sum_{k=0}^{n}\frac {(-c)^kn^k} {k!} H^k x^{nm},
	$$
	because each application of $H$ reduces the degree of $x^{nm}$ by $m$. Hence $H^{n}x^{nm}$ is a constant, $B_0 H^{n}x^{nm}=0$ and, therefore, $H^{n+1}x^{nm}=0$. Thus, it remains to prove that
	\begin{align}\label{eq:hamilton-proof}
		H^k x^{nm}=\prod_{s=0}^j \frac{\Gamma(b_s^{(n)}n+1)}{n^k\Gamma(b_s^{(n)}n-k+1)}	x^{m(n-k)},\quad k=0,\ldots,n,
	\end{align}
	for some choice of $b_s^{(n)}$, $s=1,\ldots,j$ which depends on the permutation $\sigma$. Iterating formula~\eqref{eq:H_action} we conclude that
	\begin{align*}
		H^k x^{nm}&=\left(\prod_{\ell=0}^{k-1}\frac{m(n-\ell)}{nm}\right)\prod_{s=1}^{j}\prod_{\ell=0}^{k-1}\left(\frac{m(n-\ell)-1+t_{s-1}}{nm}+b_{\sigma(s)}-1\right) x^{m(n-k)}\\
		&=\left(\prod_{\ell=0}^{k-1}\frac{m(n-\ell)}{nm}\right)\prod_{s=1}^{j}\prod_{\ell=0}^{k-1}\left(\frac{m(n-\ell)-1+t_{\sigma^{-1}(s)-1}}{nm}+b_{s}-1\right) x^{m(n-k)}.
	\end{align*}
	The first product on the right-hand side is equal to $n^{-k}\Gamma(b_0^{(n)}n+1)/\Gamma(b_0^{(n)}n-k+1)$ if we set $b_0^{(n)}=1$ and matches the product with $s=0$ on the right-hand side of~\eqref{eq:hamilton-proof}. Put
	\begin{equation}\label{eq:b_n_choice}
		b_s^{(n)}:=b_{s}+\frac{t_{\sigma^{-1}(s)-1}-1}{mn},\quad s=1,\ldots,j,
	\end{equation}
	and observe that the corresponding product on the right-hand side of~\eqref{eq:hamilton-proof} is equal to
	$$
	\frac{\Gamma(b_s^{(n)}n+1)}{n^k\Gamma(b_s^{(n)}n-k+1)}	=\prod_{\ell=0}^{k-1}\frac{b_s^{(n)}n-\ell}{n}=\prod_{\ell=0}^{k-1}\left(\frac{m(n-\ell)-1+t_{\sigma^{-1}(s)-1}}{nm}+b_{s}-1\right).
	$$
	Thus,~\eqref{eq:hamilton-proof} holds true under the choice in~\eqref{eq:b_n_choice}. It remains to note that
	$$
	b_s^{(n)}:=b_{s}+\frac{t_{\sigma^{-1}(s)-1}-1}{mn}\geq b_s-\frac{1}{n}\geq 1-\frac{1}{n}\quad\text{and}\quad \lim_{n\to\infty}b_s^{(n)}=b,\quad s=1,\ldots,j.
	$$
\end{proof}

\begin{remark} Note that in the case $m=1,2$, it follows that $\eee^{-cnH}x^{nm}$ is real-rooted.
	One may extend the model to factors $(B_s)_{s=0,1,\dots,2j+1-m}=\Big(\frac D {nm}  ,\frac D {nm} +\frac {b_1-1}x,\dots,\frac D {nm} +\frac {b_j-1}x,x,\dots,x\Big)$, to reorder separated multiplication and degree decreasing operators, or even including integration operators, and the parameters $b_s$ could also mildly depend on $n$. However, in these extensions it is crucial to restrict the permutation $\sigma$ and hence we skip this extension for not overcomplicating the setting.
\end{remark}

We shall now discuss some particular cases of Theorem~\ref{thm:hamilton-flow}.

\vspace{2mm}
\noindent
{\sc Case I} in which $m=j=1$, $b_0=1$ and $b_1=b_1^{(n)}=1+(\gamma_n+1)/n\ge 1$ for a sequence of real numbers $(\gamma_n)$ such that $\lim_{n\to\infty} \frac 1n \gamma_n  = \gamma_*\geq 0$. Then,
$$
B_0=\frac Dn,\quad B_1=\frac {xD}{n}+ \frac{\gamma_n+1}{n}
$$
and we obtain a differential flow that maps monomials to suitably rescaled Laguerre polynomials
$$
\eee^{-cn(x\frac {D^2}{n^2}+(\gamma_n+1)\frac D{n^2})}x^n=(-c)^n\HyperP{{\emptyset}}{{1+\gamma_n/n}}{n}(-x/c)=\tfrac {(-c)^nn!}{n^n}L_n^{(\gamma_n)}(nx/c),
$$
where we have used~\eqref{eq:hamilton-flow} and~\eqref{eq:b_n_choice}. The weak convergence of the empirical zero distribution to a Marchenko--Pastur distribution has already been discussed in Corollary~\ref{cor:laguerre_polys_zeros}.

\vspace{2mm}
\noindent
{\sc Case II} in which $m=2$, $j=1$, $b_0=b_1=1$ and $c=1$. Thus,
$$
B_0=\frac{D}{2n},\quad B_1=\frac{D}{2n}
$$
and the resulting operator $\eee^{-cH}$ is the heat flow. Using~\eqref{eq:hamilton-flow} and~\eqref{eq:b_n_choice} we conclude that the even degree Hermite polynomials are
$$\eee^{-nH}x^{2n}=\eee^{-\frac 1 {4n} D^2}x^{2n}=(-1)^n\HyperP{{\emptyset}}{1-\frac 1 {2n}}{n}(-x^2)=\tfrac {n!}{(-n)^n}L_n^{(-\frac 12)}(nx^2)=\tfrac {1}{(2n)^n}\mathrm{He}_{2n}(\sqrt{2n}x),$$
where the last equality follows from~\eqref{eq:Hermite-Laguerre}. Note that in this case the rays through the roots of unity is just the real line, where the semicircle distribution is supported, see \eqref{eq:theo:hermite_polys_density}.

\vspace{2mm}
\noindent
{\sc Case III} in which $m=j+1\in\N$ and $c=b_1=\ldots=b_j=1$. Then $H=(\frac D {nm})^m$. Then, $\lsem \eee^{-\frac 1 {n^{m-1}m^m} D ^m}x^{nm}\rsem_{nm}$ and its weak limit are supported on $\{r\eee^{2\pi i/m}:r\ge 0\}$. After pushing forward to the negative half-line, the limit distribution $\mu$ is the same as in the following case.

\vspace{2mm}
\noindent
{\sc Case IV} in which $m=1$, $j\ge 1$ and $c=b_1=\ldots=b_j=1$. The operator $H$ is  $({xD})^jD/n^{j+1}$.
By Theorem \ref{thm:hamilton-flow}, we have
$$\lsem\eee^{-n^{-j+1} ({xD})^jD}x^n\rsem_n\toweak \mu_{+},$$
where $\mu_{+}(\cdot):=\mu(-(\cdot))$ and the measure $\mu$  on $(-\infty,0]$ is determined by the profile $\eee^{-g'(\alpha)}=\frac{\alpha^{j+1}}{1-\alpha}$, see Corollary~\ref{cor:real_zeros_hypergeo_polys_2}. By [Part I, Lemma 6.1], the $S$-transform $S_{\mu_{+}}$ of the limit $\mu_{+}$ is
\begin{equation}\label{eq:s_transform_via_profiles}
	S_{\mu_{+}}(z)=-\frac{1+z}{z}\eee^{g'(1+z)}=(1+z)^{-j}.
\end{equation}
According to Theorem 2.2 in~\cite{Arizmendi_Hasebe} this is an $S$-transform of a $\boxtimes$-infinitely divisible distribution.

\subsection{Appell polynomials and differential operators of Laguerre--P\'{o}lya class}

The differential operators of hypergeometric type are not the only ones which map monomials to polynomials with roots located on rays, such as the nonpositive real line, another class are those of Laguerre--P\'{o}lya type. Let us revisit a recent work by Andrew Campbell \cite{CampbellAppell}, where the limiting distribution of rescaled Appell polynomials have been identified to be freely infinitely divisible distributions. We won't provide a full introduction into the tools from free probability, and instead refer to \cite{jalowy_kabluchko_marynych_zeros_profiles_part_I}, and \cite{bercovici_voiculescu,CampbellAppell, nica_speicher_book,voiculescu_nica_dykema_book}.

Let $f$ be an entire function of the Laguerre--P\'{o}lya class, see \eqref{eq:polya_laguerre_def}, with $C=1, m=0$. Appell polynomials $A_n$ are real rooted, satisfy $A_n'=nA_{n-1}$
and can be represented as
\begin{align}\label{eq:A_n}
	A_n(x)=f(D)x^n,
\end{align}
where $D=\frac{d}{dx}$ is the differential operator. Recall that the $R$-transform of a distribution $\mu$ is defined by $R_\mu(t)=tG_\mu^{\leftarrow} (t)-1$, where $G_\mu^{\leftarrow}$ is the inverse of the Cauchy transform of $\mu$. Note that this definition differs from that of \cite{CampbellAppell} by a factor of $t$.
\begin{theorem}[{\cite[Theorem 2.1]{CampbellAppell}}]\label{theo:appell}
	Define $\hat A_n(x)= \big(f(\tfrac{D}{n})\big)^nx^n$. Suppose that $N<\infty$, then the sequence $\lsem\hat A_n\rsem$ converges weakly to an $\boxplus$-infinitely divisible distribution $\mu_A$ with the $R$-transform
	$$R_A(t)=\gamma t+\sigma^2 t^2 +
	\int\frac {t(t+x)}{1-tx}\frac{x^2}{x^2+1}d\nu(x),$$
	where $\nu=\sum_{j=1}^N\delta_{1/x_j}$ and $\gamma=-c-\sum_{j=1}^N\frac{1}{x_j^3+x_j}$.
\end{theorem}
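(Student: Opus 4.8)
The plan is to split the argument into (i) real-rootedness and existence of a weak limit, and (ii) identification of that limit through its $R$-transform. Real-rootedness is immediate from Laguerre's theorem (Theorem~\ref{theo:laguerre_polys}) applied iteratively: factoring $f(z/n)^n=\eee^{cz}\,\eee^{-\sigma^2 z^2/(2n)}\prod_{j=1}^N\big((1-z/(nx_j))^n\eee^{z/x_j}\big)$, each factor lies in the Laguerre--P\'olya class, so each corresponding operator preserves real-rootedness and $\hat A_n=f(D/n)^nx^n$ is real-rooted. The whole identification then reduces to proving the clean closed form $R_{\mu_A}(t)=-t\,f'(t)/f(t)$, after which an elementary partial-fraction rewriting of $(\log f)'$ matches it to the stated expression: the Gaussian factor yields $\sigma^2 t^2$, and factor $j$ contributes $-t\cdot\tfrac{\dd}{\dd t}\big(\log(1-t/x_j)+t/x_j\big)=t^2/(x_j(x_j-t))$, which after substituting $\nu=\sum_j\delta_{1/x_j}$ becomes exactly $\int \frac{t(t+x)}{1-tx}\frac{x^2}{x^2+1}\,\dd\nu(x)$ plus the linear correction absorbed into $\gamma=-c-\sum_j 1/(x_j^3+x_j)$.

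The conceptual core is an exponential-profile computation. Writing $f(z/n)^n=\sum_k n^{-k}\big([w^k]f(w)^n\big)z^k$ and applying $D^kx^n=\frac{n!}{(n-k)!}x^{n-k}$ gives $a_{\ell:n}=n^{-(n-\ell)}\frac{n!}{\ell!}\,[w^{\,n-\ell}]f(w)^n$. For $\ell=\lfloor\alpha n\rfloor$ and $\beta=1-\alpha$, Stirling handles the prefactor while a saddle-point analysis of the Cauchy integral for $[w^{\beta n}]f(w)^n$ yields $\tfrac1n\log[w^{\beta n}]f(w)^n\to\log f(w_*)-\beta\log w_*$, where $w_*=w_*(\alpha)$ solves $w_* f'(w_*)/f(w_*)=\beta$. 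Only $g'$ enters Theorem~\ref{theo:exp_profile_implies_zeros}, and by the envelope property the $w_*$-dependence cancels, leaving $g'(\alpha)=\log w_*(\alpha)-\log\alpha$, so $\alpha\,\eee^{g'(\alpha)}=w_*(\alpha)$. Since $t\mapsto tG(t)$ inverts $\alpha\mapsto\eee^{-g'(\alpha)}$, putting $s=G(t)$ and using $R_{\mu_A}(s)=sG^{\leftarrow}(s)-1=tG(t)-1=\alpha-1$ at $t=\eee^{-g'(\alpha)}$ gives $R_{\mu_A}(w_*)=\alpha-1=-\beta=-w_*f'(w_*)/f(w_*)$, i.e. $R_{\mu_A}(t)=-t f'(t)/f(t)$, as wanted. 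Note that additive constants in $g$ (and the normalization by $P_n(1)$) are irrelevant here, which streamlines the saddle-point bookkeeping.

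The main obstacle is that $\mu_A$ is genuinely two-sided, so the $a_{\ell:n}$ need not be single-signed and Theorem~\ref{theo:exp_profile_implies_zeros} does not apply to $\hat A_n$ verbatim. I would remove this by realizing $\hat A_n$ as a finite free additive convolution of factors already treated in the paper. Since the additive finite free convolution $\boxplus_n$ of Appell polynomials multiplies their symbols, $\hat A_n=P_n^{(0)}\boxplus_n P_n^{(1)}\boxplus_n\cdots\boxplus_n P_n^{(N)}$, where $P_n^{(0)}=\eee^{cD}x^n=(x+c)^n$, $P_n^{(1)}=\eee^{-\frac{\sigma^2}{2n}D^2}x^n$ is a rescaled Hermite polynomial, and $P_n^{(j)}=(1-\frac{D}{nx_j})^n\eee^{D/x_j}x^n$ is a shifted rescaled Laguerre polynomial. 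Their limits are $\delta_{-c}$, the semicircle of variance $\sigma^2$ (Corollary~\ref{cor:hermite_polys_zeros}), and shifted Marchenko--Pastur laws (Corollary~\ref{cor:laguerre_polys_zeros}), each symmetric or single-signed and thus covered by the one-sided theory; summing their $R$-transforms $-ct$, $\sigma^2 t^2$, and $t^2/(x_j(x_j-t))$ reproduces $R_{\mu_A}$. This decomposition simultaneously supplies the missing weak convergence and cross-checks the profile computation. The two points I expect to require genuine care are the symbol-multiplication identity for $\boxplus_n$ on Appell sequences and the precise sense in which $\boxplus_n$ converges to free additive convolution when $N<\infty$ (so that $R$-transforms add in the limit); these, rather than the now-routine closed-form algebra, are where the real work lies.
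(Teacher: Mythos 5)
Your proposal is correct and, in its operative part, is essentially the paper's own proof: the paper likewise factors the symbol $f=f_1f_2f_3$ and writes $\hat A_n$ as the finite free convolution $\big(x+c+\sum_j x_j^{-1}\big)^n \boxplus_n \mathrm{He}_n\big(\sqrt{n/\sigma^2}\,x\big)\boxplus_n L_n^{(0)}(nx_1x)\boxplus_n\cdots\boxplus_n L_n^{(0)}(nx_Nx)$ (your centering of each Laguerre factor by $\eee^{D/x_j}$ versus the paper's lumping of all shifts into one constant factor is cosmetic), invokes Corollaries~\ref{cor:hermite_polys_zeros} and~\ref{cor:laguerre_polys_zeros} for the factor limits, and sums the $R$-transforms. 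The two points you flag as the remaining work are exactly those the paper closes by citation: the symbol-multiplication identity for $\boxplus_n$ is a direct computation from Eq.~(2) of \cite{marcus_spielman_srivastava}, and the passage from $\boxplus_n$ to $\boxplus$ is Theorem~3.5 of \cite{jalowy_kabluchko_marynych_zeros_profiles_part_I}, whose hypothesis of uniformly bounded roots is supplied by Theorem~1.3 of \cite{marcus_spielman_srivastava} combined with Szeg\H{o}'s zero bounds \cite{Szego} --- while your saddle-point derivation of $R_{\mu_A}(t)=-tf'(t)/f(t)$ is a correct consistency check that plays no role in the paper's argument.
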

\begin{proof}
	Let us write $f=f_1f_2f_3$, where $f_1(z)=\exp(z(c+\sum_j\frac 1 {x_j}))$, $f_2(z)=\exp(-\frac{\sigma^2}2z^2)$ and $f_3(z)=\prod_{j}\Big(1-\frac{z}{x_j}\Big)$, such that $\hat A_n(x)=\big(f_3(\tfrac{D}{n})\big)^n \big(f_2(\tfrac{D}{n})\big)^n\big(f_1(\tfrac{D}{n})\big)^nx^n$. First note that
	$$\lsem \big(f_2(\tfrac{D}{n})\big)^nx^n\rsem_n=\lsem\big(x+c+\sum_j\tfrac 1 {x_j}\big)^n \rsem_n=\delta_{-c-\sum_j\frac 1 {x_j}}.$$
	By Corollary \ref{cor:hermite_polys_zeros}, we have
	$$\lsem \big(f_1(\tfrac{D}{n})\big)^nx^n\rsem_n= \lsem \mathrm{He}_n( \sqrt {\tfrac n{\sigma^2}} x)\rsem_n\toweak \mathsf{sc_{\sigma^2}},$$
	where $\mathsf{sc}_{\sigma^2}=\mathsf{sc}_1(\tfrac{\cdot}{\sigma})$ is the semicircle distribution of variance $\sigma^2$. Furthermore, by Corollary~\ref{cor:laguerre_polys_zeros} and using the Rodrigues formula for Laguerre polynomials, we conclude that
	$$\lsem \big(1-\tfrac D {nx_j}\big)^nx^n\rsem_n=\lsem L_n^{(0)}(nx_jx)\rsem_n \toweak \mathsf{MP}_{x_j^{-1},1},\quad j=1,\ldots,N.$$
	Combining these, leads to the finite free convolution, see for instance Eq.~(2) in~\cite{marcus_spielman_srivastava}, that is,
	$$\hat A_n(x)=\big(x+c+\sum_j\tfrac 1 {x_j}\big)^n\boxplus_n  \mathrm{He}_n( \sqrt {\tfrac n{\sigma^2}} x)\boxplus_n L_n^{(0)}(nx_1x)\boxplus_n\dots\boxplus_n L_n^{(0)}(nx_Nx).$$
	Observe that by Theorem 1.3 in~\cite{marcus_spielman_srivastava} in combination with Theorems 6.32 and 6.31.3 in~\cite{Szego}, the roots of $\hat A_n(x)$ are upper bounded by an absolute constant. Thus, Theorem 3.5 in \cite{jalowy_kabluchko_marynych_zeros_profiles_part_I}, yields that the distribution $\lsem \hat A_n\rsem$ converges weakly to the free additive convolution $\delta_{-c-\sum_j\frac 1 {x_j}}\boxplus \mathsf{sc}_{\sigma^2}\boxplus \mathsf{MP}_{x_1^{-1},1}\boxplus\dots\boxplus \mathsf{MP}_{x_N^{-1},1}$. Its $R$ transform is a sum of the individual $R$-transforms $R_{\delta_x}(t)=xt$, $R_{\mathsf{sc}_{\sigma^2}}(t)=\sigma^2 t^2$, and $R_{\mathsf{MP}_{x_j^{-1},1}}(t)=\frac {t}{x_j-t}$. Summing up and rearranging yields the claim.
\end{proof}

\section{Characteristic polynomial of the sample covariance matrix}

The goal of this section is to demonstrate how the converse Theorem~\ref{theo:zeros_imply_exp_profile} can be used to derive the asymptotic behavior of the coefficients. We focus on random polynomials arising from a well-known ensemble of random matrices. Fortunately, the empirical zero distributions of these polynomials converge almost surely, which allows us to apply Theorem~\ref{theo:zeros_imply_exp_profile} pathwise.

Let $(X_{i,j})_{i,j\geq 1}$ be a double array of independent identically distributed real random variables satisfying
$$
\mathbb{E}[X_{i,j}]=0,\quad {\rm Var}\,[X_{i,j}]=:\sigma^2\in (0,+\infty).
$$
For $m,n\in\mathbb{N}$, put ${\bf X}:=[X_{i,j}]_{1\leq i\leq n,1\leq j\leq m}$. The symmetric random matrix
\begin{equation}\label{eq:sample_covariance_matrix_def}
{\bf M}(m,n):=m^{-1}{\bf X}{\bf X}^{T}
\end{equation}
is called the sample covariance matrix. Let $(\xi_{j:n})_{1\leq j\leq n}\subset [0,\infty)$ be the set of its eigenvalues and $D_{m,n}(x):=\det({\bf M}(m,n)-x{\bf Id}_n)=\prod_{j=1}^{n}(\xi_{j:n}-x)$ be its characteristic polynomial. Assume that $m=m(n)$ is such that
\begin{equation}\label{eq:c_n_wishart}
\lim_{n\to\infty}\frac{n}{m(n)}=\lambda\in (0,\infty).
\end{equation}
Put $\lambda^{*}:=\max(1-\lambda^{-1},0)$. According to~\cite[Theorem 3.6]{BaiSilverstein} and assuming~\eqref{eq:c_n_wishart}, with probability one
\begin{equation}\label{eq:convergence_to_MP}
\frac{1}{n}\sum_{j=1}^{n}\delta_{-\xi_{j:n}}=\lsem D_{m,n}(-x)\rsem_n\toweak {\sf MP}_{\sigma^2,\lambda}(-(\cdot)),
\end{equation}
where ${\sf MP}_{\sigma^2,\lambda}$ is the Marchenko--Pastur distribution with the density~\eqref{eq:MP_def} if $\lambda\le 1$, and is a sum of an atom $\lambda^*\delta_0$ at zero and the distribution with the same density~\eqref{eq:MP_def} of the total mass $\lambda^{-1}$ if $\lambda>1$.

We shall now apply Theorem~\ref{theo:zeros_imply_exp_profile} to deduce a strong law of large numbers for the coefficients $a_{k:n}$ of the normalized polynomial $D_{m,n}(-x)$. Observe that the limiting measure $\mu$ satisfies $\underline{m}=\mu(\{0\})=\max(1-\lambda^{-1},0)$ and $\overline{m}=1-\mu(\{-\infty\})=1$.

\begin{theorem}\label{thm:char_poly_sample_covariance_matrix}
Recall $\lim_{n\to\infty}\frac{n}{m(n)}=\lambda\in (0,\infty)$ and $\lambda^{*}=\max(1-\lambda^{-1},0)$. Define the coefficients of the normalized characteristic polynomial of covariance matrices as
$$
a_{k:n}:=[x^k]\frac{\det({\bf M}(m,n)+x{\bf Id}_n)}{\det({\bf M}(m,n)+{\bf Id}_n)},\quad k\in \{0,1,\ldots,n\}.
$$
Then, for all sufficiently small $\eps > 0$, with probability one,
\begin{equation}\label{eq:char_poly_sample_covariance_matrix_claim1}
\sup_{(\lambda^{*}+\eps) n \leq k  \leq (1-\eps)n}
\left|\frac 1n  \log a_{k:n}-g_{\bf M}\left(\frac k  n\right)  \right| \ton 0,
\end{equation}
as well as
\begin{equation}\label{char_poly_sample_covariance_matrix_claim2}
\sup_{0\leq k  \leq (\lambda^{*}-\eps)n}  \frac 1n  \log a_{k:n} \ton  -\infty.
\end{equation}
Here $g_{\bf M}:(\lambda_{*},1)\to (-\infty,0]$ is a strictly concave infinitely differentiable function, defined by
formulas~\eqref{eq:g_SCM_1}-\eqref{eq:g_SCM_3} below and satisfying $\eee^{-g_{\bf M}^{\prime}(\alpha)} =\frac{\sigma^2\alpha(\lambda\alpha+1-\lambda)}{1-\alpha}$, $\lambda^*<\alpha<1$.
\end{theorem}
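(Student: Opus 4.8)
The plan is to apply the converse Theorem~\ref{theo:zeros_imply_exp_profile} pathwise, on the almost sure event on which \eqref{eq:convergence_to_MP} holds. First I would record that the polynomial underlying the coefficients $a_{k:n}$ is $\det({\bf M}(m,n)+x{\bf Id}_n)=D_{m,n}(-x)=\prod_{j=1}^n(\xi_{j:n}+x)$, which has nonnegative coefficients and only nonpositive roots $-\xi_{j:n}\le 0$, and whose normalization is by its value at $x=1$, namely $D_{m,n}(-1)=\det({\bf M}(m,n)+{\bf Id}_n)$. Consequently $\lsem D_{m,n}(-x)\rsem_n=\tfrac1n\sum_j\delta_{-\xi_{j:n}}\toweak\mu:=\mathsf{MP}_{\sigma^2,\lambda}(-(\cdot))$ almost surely by \eqref{eq:convergence_to_MP}, and the $a_{k:n}$ defined in the statement are exactly the normalized coefficients $[x^k]D_{m,n}(-x)/D_{m,n}(-1)$ figuring in \eqref{eq:zeros_imply_exp_profile_claim}. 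Since the limit $\mu$ is deterministic, the resulting profile $g_{\bf M}$ is deterministic even though the polynomials are random.

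Second, on that almost sure event, Theorem~\ref{theo:zeros_imply_exp_profile} applies with $\underline m=\mu(\{0\})=\lambda^{*}$ and $\overline m=1-\mu(\{-\infty\})=1$ (the reflected Marchenko--Pastur law is compactly supported, so it carries no mass at $-\infty$, and its only atom is $\lambda^{*}\delta_0$). Part (a) then yields verbatim a strictly concave, infinitely differentiable $g_{\bf M}:(\lambda^{*},1)\to(-\infty,0]$ satisfying \eqref{eq:char_poly_sample_covariance_matrix_claim1}, together with \eqref{char_poly_sample_covariance_matrix_claim2} (which is vacuous when $\lambda\le 1$, since then $\lambda^{*}=0$). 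The bound $\sup g_{\bf M}=0$, which pins down the additive constant in $g_{\bf M}$, follows from $\sum_k a_{k:n}=1$ (the value of the normalized polynomial at $x=1$) combined with the fact that the largest coefficient is at least $1/(n+1)$.

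Third, I would identify the profile derivative through part (b): since $\lambda^{*}<1$, the map $t\mapsto tG_\mu(t)$ is the inverse of $\alpha\mapsto\eee^{-g_{\bf M}'(\alpha)}$, so it suffices to invert $t\mapsto tG_\mu(t)$ explicitly. Using $G_\mu(t)=-G_{\mathsf{MP}_{\sigma^2,\lambda}}(-t)$ and the fact, recorded in \eqref{eq:inverse_G_MP}, that $G_{\mathsf{MP}_{\sigma^2,\lambda}}$ inverts $x\mapsto\tfrac1x-\tfrac{\sigma^2}{\sigma^2\lambda x-1}$, the reflected transform $u=G_\mu(t)$ satisfies $t=\tfrac1u-\tfrac{\sigma^2}{\sigma^2\lambda u+1}$. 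Setting $\alpha=tG_\mu(t)=tu$ and eliminating $u$ gives $\alpha=1-\tfrac{\sigma^2 u}{\sigma^2\lambda u+1}$, hence $u=\tfrac{1-\alpha}{\sigma^2(\lambda\alpha+1-\lambda)}$ and $t=\alpha/u=\tfrac{\sigma^2\alpha(\lambda\alpha+1-\lambda)}{1-\alpha}$; since $t=\eee^{-g_{\bf M}'(\alpha)}$, this is the claimed formula. Finally, the closed forms \eqref{eq:g_SCM_1}--\eqref{eq:g_SCM_3} follow by integrating $g_{\bf M}'(\alpha)=\log\tfrac{1-\alpha}{\sigma^2\alpha(\lambda\alpha+1-\lambda)}$, i.e.\ assembling the antiderivatives of $\log(1-\alpha)$, $\log\alpha$ and $\log(\lambda\alpha+1-\lambda)$, and fixing the constant by $g_{\bf M}(\alpha^{*})=0$ at the unique $\alpha^{*}\in(\lambda^{*},1)$ with $g_{\bf M}'(\alpha^{*})=0$.

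The bulk of the statement is thus an immediate consequence of Theorem~\ref{theo:zeros_imply_exp_profile}, so I do not expect a serious analytic obstacle; the only genuine content lies in the algebraic inversion of the reflected Marchenko--Pastur Cauchy transform in the third step, together with the routine but slightly tedious integration producing \eqref{eq:g_SCM_1}--\eqref{eq:g_SCM_3} (which may naturally split into the regimes $\lambda<1$, $\lambda=1$ and $\lambda>1$). The one point requiring care is the pathwise nature of the argument: one must invoke the deterministic Theorem~\ref{theo:zeros_imply_exp_profile} on the full-measure event of \eqref{eq:convergence_to_MP}, which is legitimate precisely because the hypothesis of that theorem is the weak convergence $\lsem D_{m,n}(-x)\rsem_n\toweak\mu$ holding there, while its conclusion depends only on the deterministic limit $\mu$.
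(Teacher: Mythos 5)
Your proposal is correct and follows essentially the same route as the paper: a pathwise application of Theorem~\ref{theo:zeros_imply_exp_profile} on the almost sure event of~\eqref{eq:convergence_to_MP} with $\underline{m}=\lambda^{*}$, $\overline{m}=1$, followed by the algebraic inversion of the reflected Marchenko--Pastur Cauchy transform and an elementary integration to get~\eqref{eq:g_SCM_1}--\eqref{eq:g_SCM_3}. Your inversion in the third step (eliminating $u$ from $t=\tfrac1u-\tfrac{\sigma^2}{\sigma^2\lambda u+1}$ and $\alpha=tu$) is the same computation the paper performs by observing that $\alpha=tG(t)$ solves the quadratic $\sigma^2\lambda\alpha^2+(t+\sigma^2(1-\lambda))\alpha-t=0$ and solving for $t$; the two are algebraically interchangeable, and your formula agrees with the paper's.

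The one step where you deviate is the determination of the additive constant via $\sup g_{\bf M}=0$. Your argument --- $\sum_k a_{k:n}=1$ together with $\max_k a_{k:n}\ge 1/(n+1)$ --- proves $\sup g_{\bf M}\le 0$ cleanly (this is in fact already built into the codomain in Theorem~\ref{theo:zeros_imply_exp_profile}), but the reverse inequality is not immediate from it: the maximizing index $k_n$ could a priori cluster at the boundary, i.e.\ fall in the bands $((\lambda^{*}-\eps)n,(\lambda^{*}+\eps)n)$ or $((1-\eps)n,n]$, which neither~\eqref{eq:zeros_imply_exp_profile_claim} nor~\eqref{eq:divergence_outside_polys} controls, so $\tfrac1n\log\max_k a_{k:n}\to 0$ does not by itself force $\sup g_{\bf M}\ge 0$ (ruling this out needs, e.g., the log-concavity of the coefficient sequence of a real-rooted polynomial). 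The paper sidesteps this by citing~\eqref{eq:main_polys_converse_assumption_p_n(1)}: since the normalized sequence satisfies $P_n(1)=1$ and, by the conclusions of Theorem~\ref{theo:zeros_imply_exp_profile}, possesses the exponential profile $g_{\bf M}$ in the sense of Definition~\ref{def:exp_profile}, that formula gives $\sup_{\alpha}g_{\bf M}(\alpha)=\lim_{n\to\infty}\tfrac1n\log P_n(1)=0$ directly. Replacing your heuristic by this one-line citation closes the gap; everything else in your write-up matches the paper's proof.
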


Naturally, for $\sigma^2=\lambda^{-1}$, this coincides with the exponential profile of Laguerre polynomials of Corollary~\ref{cor:laguerre_polys_zeros} corresponding to $\gamma^{\ast}=\lambda^{-1}-1$.

\begin{proof}
We only need to compute $g_{\bf M}$. According to~\cite[Lemma 3.11]{BaiSilverstein}, see also Eq.~\eqref{eq:marchenko_pastur_Stieltjes}, the Cauchy transform of the right-hand side of~\eqref{eq:convergence_to_MP} is equal to\footnote{Observe that~\eqref{eq:marchenko_pastur_Stieltjes} is stated for $\lambda\in (0,1]$ only. If $\lambda>1$, then the right-hand side of~\eqref{eq:marchenko_pastur_Stieltjes} is the Cauchy transform of $\max(1-{\lambda}^{-1},0)\delta_0+{\sf MP}_{\sigma^2,\lambda}$, see \cite[Lemma 3.11]{BaiSilverstein} and their definition of the Marchenko--Pastur law in Eq.~(3.1.1).}
\begin{equation}\label{eq:marchenko_pastur_Stieltjes_inverted}
G(t)=-\frac {t+\sigma^2(1-\lambda)+\sqrt{(\lambda_{+}+t)(\lambda_{-}+t)}}{2\sigma^2\lambda t},\quad t\in  \C \bsl [-\lambda_+,-\lambda_-],
\end{equation}
where the branch is chosen such that $t\mapsto \sqrt{(\lambda_{+}+t)(\lambda_{-}+t)}\sim -t$, as $|t|\to+\infty$. One can check that $t\mapsto tG(t)$ is a solution to the quadratic equation
$$
\sigma^2 \lambda \alpha^2+(t+\sigma^2(1-\lambda))\alpha-t=0.
$$
Solving this equation for $t$ we find that the inverse of $t\mapsto tG(t)$ is
$$
\eee^{-g_{\bf M}^{\prime}(\alpha)}=\frac{\sigma^2\lambda \alpha^2+\sigma^2(1-\lambda)\alpha}{1-\alpha}=-\sigma^2\lambda \alpha+\frac{\sigma^2\alpha}{1-\alpha}=\frac{\sigma^2\alpha(\lambda\alpha+1-\lambda)}{1-\alpha}.
$$
Observe that this function if positive and nondecreasing for $\alpha\in (\lambda^{\ast},1)$. Taking the logarithms and integrating we obtain,
\begin{equation}\label{eq:g_SCM_1}
g_{\bf M}(\alpha)=-2\alpha\log\sigma+\left(-\alpha+1-\frac{1}{\lambda}\right)\log (\lambda\alpha+1-\lambda)+\alpha-\alpha\log\alpha-(1-\alpha)\log(1-\alpha)+C.
\end{equation}
The additive constant $C$ can be determined from the relation
$$
\sup_{\alpha\in (\lambda^{\ast},1)}g_{\bf M}(\alpha)=0,
$$
see formula~\eqref{eq:main_polys_converse_assumption_p_n(1)}. After elementary but lengthy calculations this yields
\begin{equation}\label{eq:g_SCM_2}
C=\frac{1}{\lambda}\log(1-\alpha^{\ast})-\frac{1}{\lambda}\log\alpha^{\ast}+\log \alpha^{\ast}-\alpha^{\ast}-2\left(\frac{1}{\lambda}-1\right)\log \sigma,
\end{equation}
where $\alpha^{\ast}$ is the unique solution on $(\lambda^{\ast},1)$ to the quadratic equation
\begin{equation}\label{eq:g_SCM_3}
\frac{1-\alpha^{\ast}}{\alpha^{\ast}(\lambda\alpha^{\ast}+1-\lambda)}=\sigma^2.
\end{equation}
The proof is complete.
\end{proof}

\appendix

\section{Lambert function}\label{sec:lambert_properties}
Here we collect some relevant properties of the Lambert function. We refer to~\cite{corless_etal} and~\cite{mezo_book_lambert_function} for more details.
The Lambert function is a  multivalued analytic function defined by the implicit equation
$$
W(z) \eee^{W(z)} = z, \qquad z\in\C.
$$
It has a branch point at $z=-1/\eee$ and infinitely many branches. We need only the principal branch $W_0$ which is defined on the whole complex plane with a branch cut at $(-\infty, -1/\eee]$. On the real line, the function $w\mapsto w\eee^w=z$ has a unique minimum $-1/\eee$ attained at $w=-1$. Thus, there is a well-defined inverse function $W_0:(-1/\eee , \infty) \to (-1,\infty)$, called the principal branch, which is monotone increasing, and satisfies
$$
W_0(z) \eee^{W_0(z)} = z, \qquad \lim_{x\downarrow -1/\eee} W_0(x) = -1, \qquad W_0(0)=0, \qquad  \lim_{x\uparrow +\infty} W_0(x) = +\infty.
$$
The principal branch $W_0$ can be analytically continued to the slitted complex plane $\C\backslash (-\infty,-1/\eee]$. In Theorem~\ref{theo:touchard_polys_zeros}, we need the limit values of $W_0(z)$ when the complex variable $z$ approaches the branch cut $(-\infty, -1/\eee]$. Such limit value depends on whether $z$ stays in the upper half-plane or in the lower half-plane. We can define
$$
W_0(x + \ii 0) :=  \lim_{\eps\downarrow 0} W_0(x + \ii \eps),
\qquad
W_0(x - \ii 0) :=  \lim_{\eps\downarrow 0} W_0(x - \ii \eps) = \overline {W_0(x + \ii 0)},
\qquad
x\in (-\infty, -1/\eee).
$$
It is also known that $\Im W_0(x + \ii 0)$ is a decreasing function of  $x\in (-\infty, -1/\eee)$ and that
$$
\lim_{x\downarrow -\infty} \Im W_0(x + \ii 0) = \pi,
\qquad
\lim_{x\uparrow -1/\eee} \Im W_0(x + \ii 0) = 0.
$$
We  need the following formula for the derivative of the Lambert function:
\begin{equation}\label{eq:W_0_der}
W_0'(z) = \frac{W_0(z)}{z(1+W_0(z))}, \qquad z \in \C\backslash (-\infty,-1/\eee], \quad z\neq 0.
\end{equation}
It is well known that for every $v\in \C$ and for all complex $|z|<1/\eee$ the following expansion holds:
\begin{equation}\label{eq:taylor_W_0_general}
\left(\frac{W_0(z)}{z}\right)^{v}
=
1 + \sum_{n=1}^\infty (-1)^{n} \frac{v(n+v)^{n-1}}{n!} z^n
\end{equation}
This identity easily follows from the Lagrange inversion formula in B\"urmann's form~\cite[pp.~732--733]{flajolet_sedgewick_book}. This formula states that if the formal power series $y(z)$ is defined as the solution to $y/\phi(y) = z$, where $\phi(u) = \sum_{k=0}^\infty \phi_k u^k$ is a formal power series with $\phi_0\neq 0$, then
$$
[z^n] \left(\frac {y(z)}{z}\right)^v = \frac {v}{n+v} [u^n] \phi(u)^{n+v}.
$$
Taking $\phi(u) = \eee^{-u}$, one easily arrives at~\eqref{eq:taylor_W_0_general}.

Taking $v=-1$ in~\eqref{eq:taylor_W_0_general},
and multiplying by $z$, we obtain the formula
\begin{equation}\label{eq:taylor_exp_W_0}
\eee^{W_0(z)} = \frac{z}{W_0(z)} = 1 + \sum_{n=1}^\infty (-1)^{n+1}\frac{ (n-1)^{n-1}}{n!} z^n
=
1 + \sum_{k=0}^\infty\frac{ (-k)^{k}}{(k+1)!} z^{k+1}.
\end{equation}

\section*{Acknowledgements}
The authors thank Sung-Soo Byun for suggesting a q-orthogonal polynomial. ZK was supported by the German Research Foundation under Germany's Excellence Strategy EXC 2044 - 390685587, Mathematics M\"{u}nster: Dynamics - Geometry - Structure. ZK and JJ have been supported by the DFG priority program SPP 2265 Random Geometric Systems.


\end{document}